\newtheorem{theorem}{Theorem}[section]
\newtheorem{lemma}[theorem]{Lemma}
\newtheorem{proposition}[theorem]{Proposition}
\newtheorem{corollary}[theorem]{Corollary}
\newtheorem{definition}[theorem]{Definition}
\newtheorem{example}[theorem]{Example}
\newtheorem{remark}[theorem]{Remark}
\newcommand{\nc}{\newcommand}	
\nc{\al}{\alpha}
\nc{\be}{\beta}
\nc{\ga}{\gamma}
\nc{\de}{\delta}
\nc{\ep}{\epsilon}
\nc{\la}{\lambda}
\nc{\si}{\sigma}
\nc{\fg}{\mathfrak g}
\nc{\fh}{\mathfrak h}
\nc{\msl}{\mathfrak{sl}}
\nc{\mgl}{\mathfrak{gl}}
\nc{\mb}{\mathcal{B}}
\renewcommand{\C}{\mathbb{C}}
\nc{\R}{\mathbb{R}}
\nc{\so}{s^{\text{o}}_{1,k}}
\nc{\sii}{s^{\text{i}}_{1,k}}
\nc{\wt}{\text{wt}}
\nc{\om}{\omega}
\nc{\Z}{\mathbb{Z}}
\newcommand {\DS} [1] {\displaystyle #1}
\title{Skew Howe duality for crystals and the cactus group}
\author[Iva Halacheva]{Iva Halacheva}
\address{i.halacheva@northeastern.edu \\ Northeastern University, Department of Mathematics;\\
	360 Huntington Ave, Boston, MA, USA 02115}
\begin{document}

	\begin{abstract} 
	The  crystals for a finite-dimensional complex reductive Lie algebra $\fg$ encode the structure of its representations, yet can also reveal surprising new structure of their own. We study the cactus group $C_{\fg}$, constructed using the Dynkin diagram of $\fg$, and its combinatorial action on any $\fg$--crystal via Sch\"{u}tzenberger involutions. We compare this action with that of the Berenstein--Kirillov group on Gelfand--Tsetlin patterns. Henriques and Kamnitzer define an action of $C_n=C_{\mgl_n}$ on $n$--tensor products of $\fg$--crystals, for any $\fg$ as above. We discuss the crystal corresponding to the $\mgl_n \times \mgl_m$--representation $\Lambda^N(\C^n \otimes \C^m)$, derive skew Howe duality on the crystal level and show that the two types of cactus group actions agree in this setting. A future application of this result is discussed in studying two families of maximal commutative subalgebras of the universal enveloping algebra, the shift of argument and Gaudin algebras, where an algebraically constructed monodromy action matches that of the cactus group.
	\end{abstract}
	
	\maketitle


	\section{Introduction}
	\label{sec:intro}
	One approach to studying the representation theory of a finite--dimensional complex reductive Lie algebra $\fg$ is through crystals--a combinatorial tool which encodes the structure of $\fg$--representations.  A crystal can be presented as a coloured directed graph exhibits nice behaviour for irreducible components, direct sums, and tensor products. The category of $\fg$--crystals was studied by Henriques and Kamnitzer in \cite{HK}, where they describe its structure as a \textsl{coboundary category} with an ``outer'' action of the cactus group $C_n$ on $n$--tensor products of $\fg$--crystals, analogous to the action of the braid group $B_n$ in a braided category. We study the cactus group $C_{\fg}$ for any $\fg$, with $C_{\fg}=C_n$ for $\fg=\mgl_n(=\mgl_n(\C))$, and an ``inner'' action on any $\fg$--crystal via generalized Sch\"{u}tzenberger involutions. In \cite{L}, Losev constructs an action of $C_{\fg}$ on the Weyl group $W_\fg$, compatible with Kazhdan-Lusztig cells, using category $\mathcal{O}$ wall-crossing functors. For $\fg=\mgl_n$ (where for the zero--weight space of the tensor product of crystals with highest weight $\varpi_1$, $B(\varpi_1)^{\otimes n}_0 \cong S_n$), this action coincides with the Henriques--Kamnitzer $C_n$--action on $n$--fold tensor products of $\fg$-crystals. In \cite{BGL}, the authors define and study a cactus action on integrable highest weight $U_q(\fg)$--modules, and in particular pose the question of how the ``inner'' and ``outer'' cactus actions are related, both on the module and crystal level.  We answer the question on the level of crystals in setting of the $\mgl_n \times \mgl_m$--representation $\Lambda^N(\C^n \otimes \C^m)$. The corresponding crystal can be realized as the set $\Lambda^NB_{n,m}$ of $n\times m$--matrices with $N$ 1's and all remaining entries 0's. We show this set has the structure of both a $\mgl_n$-- and a $\mgl_m$--crystal, and the two structures commute.  We then derive $(\mgl_n,\mgl_m)$ skew Howe duality on the level of crystals, also studied by van Leeuwen \cite{vL} and Danilov-Koshevoy \cite{DK}, analogous to skew Howe duality on the level of representations. We use this duality and the decomposition:
	\[\Lambda^N B_{n,m} \cong \bigsqcup_{\underline{l} \in \mathbb{N}^n, |\underline{l}|=N} \mathcal{B}^{\mgl_m}_{\varpi_{l_1}} \otimes \hdots \otimes \mathcal{B}^{\mgl_m}_{\varpi_{l_n}} \] 
	where $\varpi_i$ is the $i$--th fundamental weight of $\mgl_m$, to show that the ``outer'' action of the cactus group $C_n$ on $\Lambda^NB_{n,m}$, acting on $n$--tensor products of $\mgl_m$--crystals as defined by Henriques and Kamnitzer, agrees with the ``inner'' cactus group action on this $\mgl_n$--crystal. A future application of this result is to relate the monodromy actions (studied in \cite{HKRW}) coming from two  families of maximal commutative subalgebras in $U(\mgl_n)$ and $(U(\mgl_m))^{\otimes n}$, known as shift of argument and Gaudin algebras.
	
	Section \ref{sec:prelim} provides the setup and some background information on crystals, while Section \ref{sec:cactus} describes the cactus group $C_{\fg}$ for general $\fg$ and its action on any $\fg$--crystal. In Section \ref{sec:tensor}, we discuss the crystal structure of $\Lambda^NB_{n,m}$. The result of establishing crystal skew Howe duality and the equivalence of the outer and inner cactus group actions on $\Lambda^NB_{n,m}$ is in Section \ref{sec:skew}. Finally, a future application in a geometric realization of the cactus group action is described in Section \ref{sec:shiftgaudin}.

	\subsection{Acknowledgements}
		I would like to thank Joel Kamnitzer for suggesting studying the cactus group and for many discussions. I am also grateful to Alex Weekes and the Geometric Representation Theory group at the University of Toronto for the mathematical discussions. This project was partially supported by an NSERC CGS-D scholarship.

\section{Preliminaries}
\label{sec:prelim}
Crystals are a useful combinatorial tool for encoding the  representations of a complex reductive (in particular, finite--dimensional) Lie algebra $\fg$ and were originally introduced by Kashiwara in \cite{Kash0} and \cite{Kash}. Let $U_q(\fg)$ denote the quantum group of $\fg$, and $I$ the Dynkin diagram of $\fg$. We present some of the necessary background here, further information can be obtained in \cite{HoKa}, \cite{{Jo}}, and \cite{Kash}. Intuitively, a crystal $B=B_V$ is a ``nice'' basis for a $U_q(\fg)$--module $V$ ``at $q=0$'', together with operators $e_i,\;f_i : B \longrightarrow B\cup \{0\}, \; i \in  I$, called the \textbf{Kashiwara operators}, which are modified versions of the root operators.

Kashiwara's \textsl{grand loop argument} \cite{Kash} shows that for every finite-dimensional $U_q(\fg)$--representation, which in turn corresponds to a finite-dimensional $\fg$--representation, there is a crystal which is unique up to isomorphism.

\vspace{0.3cm}
\noindent\textbf{\underline{Properties}:} Crystals have several nice properties which make them well-suited for studying the structure of representations of $\fg$.
\begin{enumerate}
	\item Let $\Lambda$ denote the weight lattice of $\fg$. For a $\fg$--representation $V=\bigoplus_{\la \in \Lambda}V^{\la}$ with corresponding crystal $B=\sqcup_{\la \in \Lambda} B^{\la}$, the character of $V$ can be recovered by:\[\text{ch}(V)=\sum_{\la \in \Lambda} |B^{\la}| e^{\la}\]	
	\parpic[r][b]{%
		\begin{minipage}{45mm}
			\begin{figure}[H]
				\begin{center}
					\def\svgwidth{3.7cm}
					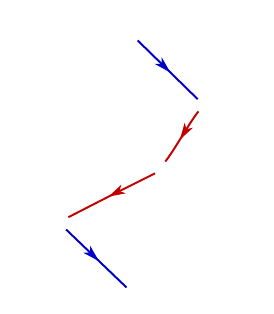
				\end{center}
				\vspace{0.2cm}
				\captionsetup{width=0.9\textwidth}
				\caption{The crystal graph for the adjoint representation of $\msl_3$.}
				\label{fig:crystal}
			\end{figure}
		\end{minipage}}
		
		\item A useful combinatorial tool that comes from a crystal $B$ is the \textbf{crystal graph}, whose vertices are the set $B$ and whose edges are $I$--coloured and correspond to the action of the Kashiwara operators: for $b_1,b_2 \in B$, we have $b_1 \xrightarrow{i} b_2$ precisely when $f_i(b_1)=b_2$ (equivalently $e_i(b_2)=b_1$). For instance, the crystal graph for the adjoint representation of $\msl_3$ is shown in Figure \ref{fig:crystal}. Moreover, irreducible components of the representation are easy to spot in the crystal graph as they correspond to connected components of the graph.
		
		More abstractly, a \textbf{$\fg$--crystal} $B$ is a finite set $B$ together with the maps:
		\begin{align*}
			&e_i,\; f_i: B \rightarrow B\cup\{0\}, \; \ep_i, \phi_i: B \rightarrow \Z \quad (i \in I) \\
			&\hspace{2cm}\wt: B \rightarrow \Lambda
		\end{align*}
		Satisfying the properties:
		\begin{enumerate}
			\item For all $b,c \in B$, $f_i(b)=c$ if and only if $b=e_i(c)$.
			\item For $b \in B$, if $e_i(b) \in B$, then $\wt(e_i(b))=\wt(b)+\al_i$ and if $f_i(b) \in B$, then $\wt(f_i(b))=\wt(b)-\al_i$.
			\item For all $b \in B, \; i \in I$, $\ep_i(b)=\max\{k\;| e^k_i(b) \in B\}$ and $\phi_i(b)=\max\{k\;| f^k_i(b) \in B\}$.
			\item For all $b \in B, \; i \in I$, we have $\phi_i(b)-\ep_i(b)=\left\langle \wt(b),\alpha^{\vee}_i\right\rangle$.
		\end{enumerate}
	\end{enumerate}
	\vspace{0.2cm}
	\begin{enumerate}\setcounter{enumi}{2}
		\item Taking the \textbf{tensor product} of two $\fg$--crystals $A$ and $B$ can be done using the following rules--the underlying set of $A \otimes B$ is $A \times B$ and for any element $a \otimes b \in A \otimes B$ we have:

			\begin{align*}
				e_i(a \otimes b) &=\begin{cases}
					e_i(a) \otimes b & \text{ if } \phi_i(a) \geq \ep_i(b) \\
					a \otimes e_i(b) & \text{ otherwise}
				\end{cases} \\
				f_i(a \otimes b) &=\begin{cases}
					f_i(a) \otimes b & \text{ if } \phi_i(a) > \ep_i(b) \\
					a \otimes f_i(b) & \text{ otherwise}
				\end{cases} \\
				\wt(a\otimes b) &=\wt(a) + \wt(b) \\
				\ep_i(a\otimes b) &= \max(\ep_i(a),\ep_i(b)-\left\langle\wt(a),\al^{\vee}_i\right\rangle) \\
				\phi_i(a\otimes b) &= \max(\phi_i(b),\phi_i(a)+\left\langle\wt(b),\al^{\vee}_i\right\rangle) \\
			\end{align*}

		We will discuss more generally the tensor product of $m \geq 2$ crystals in Section \ref{sec:tensor}. In addition, the \textbf{direct sum} of two crystals is their disjoint union.
		
		\vspace{0.2cm}
		
		\item 	Among the abstract $\fg$--crystals, we wish to only consider those corresponding to a representation of $\fg$. For that purpose, we take the unique \textsl{closed family} of \textbf{highest weight crystals} $\{B_{\la}| \; \la \in \Lambda_+\}$. Namely, in each $B_{\la}$ there is a (highest weight) element $b_{\la}$ such that $\wt(b_{\la})=\la, \; e_i(b_{\la})=0 \; \forall \; i \in I$, and $B_{\la}$ is generated by $f_i$ acting on $b_{\la}$, together with the additional property that the connected crystal generated by $b_{\la} \otimes b_{\mu} \in B_{\la} \otimes B_{\mu}$ is isomorphic to $B_{\la+\mu}$, for all $\la, \mu \in \Lambda_+$, i.e. dominant weights (see \cite{Jo}). Then, the \textbf{category of $\fg$--crystals} we want to consider has objects each of whose connected components is $B_{\la}$ for some $\la \in \Lambda_+$, while the morphisms are defined below.
	\end{enumerate}
	
	\begin{definition}
		Suppose $\mb_1$ and $\mb_2$ are $\fg$--crystals. A \textbf{crystal morphism} $F: \mb_1 \longrightarrow \mathcal{B}_2$ is a map of sets $F: \mb_1 \cup \{0\} \longrightarrow \mb_2 \cup \{0\}$ such that:
		\begin{enumerate}
			\item $F(0)=0$.
			\item If $b \in \mb_1$ s.t. $F(b) \in \mb_2$, then $\wt(F(b))=\wt(b), \; \ep_i(F(b))=\ep_i(b),\; \phi_i(F(b))=\phi_i(b)$.
			\item If $b \in \mb_1$ such that $F(b) \in \mb_2$, and $f_i(b) \in \mb_1$ (i.e. it is nonzero), then
			$F(f_i(b))=f_i(F(b))$, respectively if $e_i(b) \in \mb_1$ then $F(e_i(b))=e_i(F(b))$.	
		\end{enumerate}
	\end{definition}
	
	We will recall some further properties of crystals in the context of the following sections. Next, we define the general cactus group and its action on crystals.

\section{The Cactus Group}
\label{sec:cactus}

As before, $\fg$ denotes a (finite--dimensional) complex reductive Lie algebra, $I$ is the set of vertices in its Dynkin diagram, and $\{\alpha_i\}_{i \in I}$ are the simple roots. As described in \cite{HK}, there is a Dynkin diagram automorphism $\theta: I\longrightarrow I$ defined by $\alpha_{\theta(i)} = -w_0 \cdot \alpha_i$, where $w_0$ is the longest element in the Weyl group of $\fg$. For instance, in the case of $\mgl_n$, $\theta(i)=n-i$.

The \textbf{cactus group} $C_{\fg}$ corresponding to $\fg$ has generators $s_J$, where $J$ runs over the connected subdiagrams of $I$, and relations:
\vspace{0.2cm}
\begin{enumerate}
	\item\label{rel:invol} $s^2_J=1 \quad \forall \, J \subset I$
	\item\label{rel:parcomm} $s_Js_{J'} = s_{\theta_J(J')}s_J \quad  \forall \, J' \subset J$
	\item $s_Js_{J'}=s_{J'}s_J \quad \forall \, J',J \subset I$ such that $J' \cup J$ is not connected
\end{enumerate}
\vspace{0.2cm}

In Relation (\ref{rel:parcomm}), $\theta_J$ is the Dynkin diagram automorphism of $J$, namely  $\alpha_{\theta_J(j)} = -w^J_0 \cdot \alpha_j, \; \forall \; j \in J$, where $w^J_0$ is the longest element in the Weyl group for $\fg_J$ ($\fg$ restricted to $J$). In our original notation, $\theta=\theta_I$. This definition is a generalization of the cactus group defined in \cite{HK}. Henriques and Kamnitzer's cactus group corresponds to $C_{\mgl_n}$.

\begin{remark}
	The cactus group $C_{\fg}$ is a (distant) cousin of the braid group as it also surjects onto the Weyl group $W_{\fg}$ of $\fg$, by mapping $s_J \mapsto w^J_0$. The kernel of this surjection is called the \textbf{pure cactus group} $PC_{\fg}$:
	\[1 \longrightarrow PC_{\fg} \longrightarrow C_{\fg} \longrightarrow W_{\fg} \longrightarrow 1\]
\end{remark}

\subsection{An action on crystals}
\label{sec:action}

\subsubsection{Action by Sch\"{u}tzenberger involutions}

Let $B_{\la}$ be the highest weight $\fg$--crystal (corresponding to the irreducible highest weight representation $V_{\la}$) with highest weight $\lambda$ and crystal maps $\text{wt},e_i,f_i,\ep_i,\phi_i$, where $i \in I$. Then, as in \cite{HK}, we denote by $\xi_{\la}: B_{\la} \rightarrow B_{\la}$ the unique map of sets satisfying, for all $b \in B_{\la}$:
\begin{align*}
	e_i \cdot \xi_{\la}(b) &= \xi_{\la}(f_{\theta(i)} \cdot b) \\
	f_i \cdot \xi_{\la}(b) &= \xi_{\la}(e_{\theta(i)} \cdot b) \\
	\text{wt}(\xi_{\la}(b)) &= w_0 \cdot \text{wt}(b)
\end{align*}

In particular, $\xi_{\la}$ interchanges the highest and lowest weight elements of $B_{\la}$, $b_{\la}\stackrel{\xi_{\la}}{\longrightarrow} b^{\text{low}}_{\la}\stackrel{\xi_{\la}}{\longrightarrow} b_{\la}$. More generally we have:
\begin{definition}\label{def:schu}
	The \textbf{Sch\"{u}tzenberger involution} $\xi=\xi_B: B \rightarrow B$ for any $\fg$-crystal $B$ acts by applying the corresponding map $\xi_{\lambda}$ to each connected component $B_{\la}$ of $B$. 
\end{definition}

In \cite{HKRW}, together with Kamnitzer, Rybnikov, and Weekes, we show that the cactus group $C_{\fg}$ acts on any $\fg$--crystal.

\begin{theorem}[\cite{HKRW}, Theorem 5.19]\label{prop:action}
	There is a natural action of the cactus group $C_{\fg}$ on any $\fg$--crystal $B$ given by, for any connected subdiagram $J \subset I$:
	\[s_J(b)=\xi_{B_J}(b)\]
	
	\noindent where $B_J$ is the restriction of $B$ to the subdiagram $J$ of $I$; as a graph it has the same set of vertices but only the arrows labelled by $J$.
\end{theorem}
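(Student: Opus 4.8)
The plan is to show that the assignment $s_J \mapsto \xi_{B_J}$ respects the three defining relations of $C_{\fg}$. Since the prescribed action is applied componentwise, it suffices to treat a single connected crystal $B$, while bearing in mind that its restriction $B_J$ to a subdiagram $J$ typically splits into several connected $\fg_J$--components, on each of which $\xi_{B_J}$ acts as the corresponding $\xi_{\la}$. Throughout I would use that on $B_J$ the map $\xi_{B_J}$ is the \emph{unique} set map satisfying $e_i\cdot\xi_{B_J}(b)=\xi_{B_J}(f_{\theta_J(i)}\cdot b)$ and $f_i\cdot\xi_{B_J}(b)=\xi_{B_J}(e_{\theta_J(i)}\cdot b)$ for $i\in J$, together with $\wt(\xi_{B_J}(b))=w^J_0\cdot\wt(b)$.

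For Relation (\ref{rel:invol}), $s_J^2=1$: applying the defining relations twice and using $\theta_J^2=\mathrm{id}$ together with $(w^J_0)^2=1$ shows that $\xi_{B_J}^2$ commutes with every $e_i,f_i$ ($i\in J$) and preserves weights, hence on each connected $\fg_J$--component it is an automorphism fixing the highest weight element and so equals the identity by uniqueness. For Relation (3), where $J\cup J'$ is disconnected: here $J$ and $J'$ are disjoint with no edges between them, so the Kashiwara operators indexed by $J$ commute with those indexed by $J'$, and $w^{J\cup J'}_0=w^J_0w^{J'}_0$ as a product of commuting longest elements. One then checks that $\xi_{B_J}$ and $\xi_{B_{J'}}$ act on ``independent directions'', so both composites coincide with $\xi_{B_{J\cup J'}}$ and hence commute.

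Relation (\ref{rel:parcomm}), $s_Js_{J'}=s_{\theta_J(J')}s_J$ for $J'\subset J$, is the heart of the argument. Writing $J''=\theta_J(J')$ and $\psi=\xi_{B_J}\circ\xi_{B_{J'}}\circ\xi_{B_J}$ (using $s_J^2=1$), I would prove $\psi=\xi_{B_{J''}}$ by verifying the characterizing relations of $\xi_{B_{J''}}$. For $i\in J''$ one has $\theta_J(i)\in J'$; chaining the defining relations --- first $e_i\xi_{B_J}(\,\cdot\,)=\xi_{B_J}(f_{\theta_J(i)}\,\cdot\,)$, then $f_{\theta_J(i)}\xi_{B_{J'}}(\,\cdot\,)=\xi_{B_{J'}}(e_{\theta_{J'}\theta_J(i)}\,\cdot\,)$, and finally the relation for $\xi_{B_J}$ with the index $\theta_{J'}\theta_J(i)\in J'\subset J$ --- yields $e_i\,\psi(b)=\psi\big(f_{\theta_J\theta_{J'}\theta_J(i)}\cdot b\big)$. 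Thus the $e_i$--intertwining relation for $\psi$ holds provided $\theta_J\theta_{J'}\theta_J(i)=\theta_{J''}(i)$, and the weight relation likewise reduces to $w^J_0w^{J'}_0w^J_0=w^{J''}_0$. Both are purely Weyl--group statements: since $\al_{\theta_J(j)}=-w^J_0\cdot\al_j$, conjugation by $w^J_0$ carries the reflection in $\al_j$ to the reflection in $\al_{\theta_J(j)}$, hence maps the parabolic subgroup $W_{J'}$ isomorphically onto $W_{J''}$ and its longest element $w^{J'}_0$ to $w^{J''}_0$; transporting the intrinsic Dynkin automorphism along this isomorphism gives $\theta_{J''}=\theta_J\theta_{J'}\theta_J$ on $J''$.

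The main obstacle I expect is the bookkeeping of connected components. The involutions $\xi_{B_J}$, $\xi_{B_{J'}}$, and $\xi_{B_{J''}}$ are each defined componentwise with respect to \emph{different} decompositions of $B$, so the chain computation above verifies the defining relations only ``locally'', and I must still argue that $\psi$ sends each $J''$--component to \emph{itself} before invoking uniqueness of the Schützenberger involution on connected crystals. I would handle this by tracking highest and lowest weight elements: the intertwining relations force $\psi$ to carry $J''$--components to $J''$--components and $J''$--highest weight vectors to $J''$--lowest weight vectors, and confirming that each component returns to itself is the step demanding the most care.
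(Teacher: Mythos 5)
The first thing to note is that the paper itself does not prove this statement: it is imported verbatim from \cite{HKRW} (Theorem 5.19), so your argument has to be judged against that reference rather than against anything in the present text. Your overall strategy --- verifying the three cactus relations directly from the characterizing properties of $\xi_{B_J}$, with the relation $s_Js_{J'}=s_{\theta_J(J')}s_J$ reduced to the Weyl--group identities $w^J_0w^{J'}_0w^J_0=w^{J''}_0$ and $\theta_{J''}=\theta_J\theta_{J'}\theta_J$ on $J''=\theta_J(J')$ --- is the standard route, and every identity you invoke checks out: conjugation by $w^J_0$ sends $r_j$ to $r_{\theta_J(j)}$, hence gives a Coxeter isomorphism $W_{J'}\to W_{J''}$ carrying longest element to longest element, and the chain computation $e_i\,\psi(b)=\psi(f_{\theta_J\theta_{J'}\theta_J(i)}\cdot b)$ is correct.

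The one step you leave open --- that $\psi=\xi_{B_J}\circ\xi_{B_{J'}}\circ\xi_{B_J}$ preserves each $J''$--component, so that uniqueness on connected components applies --- is indeed the crux, but it closes in one line from ingredients you already have. Since $\xi_{B_J}$ intertwines $e_i,f_i$ ($i\in J'$) with $f_{\theta_J(i)},e_{\theta_J(i)}$ and $\theta_J(J')=J''$, it carries each $J''$--connected subset onto a $J'$--connected subset and conversely; hence it maps each $J''$--component $E$ bijectively onto a $J'$--component $D=\xi_{B_J}(E)$, sending $J''$--highest to $J'$--lowest weight elements. As $\xi_{B_{J'}}(D)=D$ and $\xi_{B_J}^2=\mathrm{id}$ (your first relation), one gets $\psi(E)=\xi_{B_J}(\xi_{B_{J'}}(D))=\xi_{B_J}(D)=E$. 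With that, $\psi|_E$ and $\xi_{B_{J''}}|_E$ are two self-maps of the connected $J''$--crystal $E$ satisfying the same intertwining and weight relations, so they coincide. A smaller soft spot is your third relation: the defining properties of $\xi_{B_J}$ say nothing about its interaction with $e_j,f_j$ for $j\notin J$, so ``acting in independent directions'' is not automatic; it requires that each connected $(J\cup J')$--component of $B$ is a product of a connected $J$--crystal and a connected $J'$--crystal, which is where the standing hypothesis that $B$ lies in the closed family of highest weight (normal) crystals actually enters the proof.
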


\subsubsection{The reduced cactus group}

In \cite{Kash} and \cite{Kash2}, Kashiwara studies the Sch\"{u}tzenberger involutions associated to a single node in the Dynkin diagram $I$ of $\fg$ and shows that they generate an action of the Weyl group on any $\fg$--crystal $B$. Namely, for $i \in I$, he defines a set automorphism $s_{\{i\}}: B \rightarrow B$ by, for any $b \in B$:
\[s_{\{i\}}(b) =\begin{cases}
f^{\left\langle \wt(b),\alpha^{\vee}_i\right\rangle}_i(b) & \text{ if } \left\langle \wt(b),\alpha^{\vee}_i\right\rangle \geq 0 \\
e^{-\left\langle \wt(b),\alpha^{\vee}_i\right\rangle}_i(b) & \text{otherwise}
\end{cases}\]

Then, if $W_{\fg} \subset GL(\Lambda)$ denotes the Weyl group of $\fg$ generated by the simple reflections $r_i$ such that $r_i(\la)=\la-\left\langle \la, \alpha^{\vee}_i\right\rangle\alpha_i$, we have:

\begin{theorem}[\cite{Kash,Kash2}] Given a reductive Lie algebra $\fg$ with Dynkin diagram $I$, for any $\fg$-crystal $B$ there exists a unique action of $W_{\fg}$ on the set $B$, $S: W_{\fg} \longrightarrow \text{Aut}(B)$, such that $S(r_i)=s_{\{i\}}$ for any $i \in I$.	
	\label{thm:kas}
\end{theorem}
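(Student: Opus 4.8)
The plan is to use the Coxeter presentation of $W_\fg$. Recall that $W_\fg$ is generated by the simple reflections $\{r_i\}_{i\in I}$ subject only to $r_i^2=1$ and the braid relations $r_i r_j r_i \cdots = r_j r_i r_j \cdots$, each side an alternating word of length $m_{ij}$, where $m_{ij}\in\{2,3,4,6\}$ is the order of $r_ir_j$. Uniqueness is then immediate: since the $r_i$ generate $W_\fg$ and we demand $S(r_i)=s_{\{i\}}$, the homomorphism $S$ is forced on all of $W_\fg$. For existence, by the universal property of a group presented by generators and relations it suffices to exhibit the $s_{\{i\}}$ as elements of $\text{Aut}(B)$ satisfying exactly these relations. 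So the whole proof reduces to two verifications: that each $s_{\{i\}}$ is an involution, and that the $s_{\{i\}}$ satisfy the braid relations.

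First I would check that $s_{\{i\}}$ is a well-defined involution with $\wt(s_{\{i\}}(b))=r_i(\wt(b))$. For fixed $i$, forgetting all arrows except the $i$--coloured ones decomposes $B$ into $\mathfrak{sl}_2$--strings. On a string $b_0 \xrightarrow{i} b_1 \xrightarrow{i} \cdots \xrightarrow{i} b_\ell$ with $e_i b_0=0$, the element $b_k$ has $\ep_i(b_k)=k$ and $\phi_i(b_k)=\ell-k$, so $\langle \wt(b_k),\alpha_i^\vee\rangle = \phi_i-\ep_i=\ell-2k$. A direct computation shows $s_{\{i\}}(b_k)=b_{\ell-k}$ regardless of the sign of $\ell-2k$; hence $s_{\{i\}}$ reverses each $i$--string, is an involution, and acts on weights as the reflection $r_i$. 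This also confirms $s_{\{i\}}\in\text{Aut}(B)$.

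The heart of the matter, and the main obstacle, is the braid relations. My strategy is to reduce to rank two. Both $s_{\{i\}}$ and $s_{\{j\}}$ are defined purely in terms of $e_i,f_i,e_j,f_j$ and the pairings $\langle\wt(-),\alpha_i^\vee\rangle$, $\langle\wt(-),\alpha_j^\vee\rangle$, all of which are unchanged when we restrict $B$ to the subdiagram $\{i,j\}$, producing a crystal $B_{\{i,j\}}$ for the rank--two algebra $\fg_{\{i,j\}}$. Moreover $m_{ij}$ is exactly the order of $r_ir_j$ inside $W_{\{i,j\}}$. Since each $s_{\{i\}}$ preserves the connected components of $B_{\{i,j\}}$ (it acts componentwise, as the Sch\"{u}tzenberger involution for the subdiagram $\{i\}$), it is enough to verify the braid relation on a single irreducible highest--weight crystal for each rank--two type: $A_1\times A_1$ with $m_{ij}=2$ (commutativity, immediate since the $i$-- and $j$--strings are independent), $A_2$ with $m_{ij}=3$, $B_2=C_2$ with $m_{ij}=4$, and $G_2$ with $m_{ij}=6$.

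I expect these rank--two cases to be where essentially all the work lies. For $A_2$ one can use the semistandard tableau model, where $s_{\{1\}},s_{\{2\}}$ act through Bender--Knuth type moves whose composition realizes the rotation $r_1r_2r_1=r_2r_1r_2$, checked on each $B_\lambda$. For $B_2$ and $G_2$ one verifies the length--$4$ and length--$6$ identities on the explicit crystal graphs of the fundamental and adjoint crystals and propagates to all $B_\lambda$ via the embedding $B_\lambda \hookrightarrow B_\varpi^{\otimes k}$ together with compatibility of $s_{\{i\}}$ with tensor factors. A more conceptual alternative that avoids the case analysis is to identify $s_{\{i\}}$ with the $q\to 0$ limit of Lusztig's braid symmetry $T_i$, which satisfies the braid relations on $U_q(\fg)$--modules; but making this limit precise needs the theory of global crystal bases, so I would present the rank--two reduction as the primary, self--contained argument.
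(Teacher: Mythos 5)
The paper offers no proof of this statement: it is quoted directly from Kashiwara \cite{Kash,Kash2}, so there is no internal argument to compare yours against. Judged on its own terms, your skeleton is the standard and correct one. Uniqueness from the generators, the string--reversal computation showing $s_{\{i\}}(b_k)=b_{\ell-k}$ (hence $s_{\{i\}}^2=1$ and $\wt(s_{\{i\}}b)=r_i\cdot\wt(b)$), and the reduction of the braid relations to the rank--two subalgebras $\fg_{\{i,j\}}$ are all sound, granting the standard fact that the restriction of a normal crystal to a subdiagram is again a disjoint union of highest--weight crystals (needed even for your $A_1\times A_1$ case, since an abstract crystal need not decompose into rectangles).

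The gap is in the rank--two braid relations, which you correctly identify as the heart of the matter but do not actually establish. The proposed propagation for $B_2$ and $G_2$ --- verify on the fundamental/adjoint crystals and extend to all $B_\la$ via $B_\la\hookrightarrow B_\varpi^{\otimes k}$ ``together with compatibility of $s_{\{i\}}$ with tensor factors'' --- rests on a false premise: $s_{\{i\}}$ does not act factorwise on tensor products. Already for $\mathfrak{sl}_2$ one has $s_{\{1\}}(1\otimes 2)=1\otimes 2$ in $B_{\varpi_1}\otimes B_{\varpi_1}$ (it has weight zero), whereas $s_{\{1\}}(1)\otimes s_{\{1\}}(2)=2\otimes 1$, a different element. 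What is true is that $s_{\{i\}}$ commutes with crystal embeddings, being defined purely from $e_i,f_i,\wt$; but then verifying the braid relation on $B_\varpi^{\otimes k}$ requires it on every irreducible component $B_\mu$ of that tensor product, which is exactly the statement being proved --- the induction does not close. The $A_2$ case via Bender--Knuth-type moves is plausible but likewise only asserted, and it is already a nontrivial theorem (essentially the Lascoux--Sch\"{u}tzenberger result that the crystal reflection operators on tableaux satisfy the braid relations). So as written you have proved the relations $s_{\{i\}}^2=1$ and correctly reduced the problem, but the essential content remains open; a complete argument must either carry out the rank--two verification honestly or pass through Lusztig's braid operators and global bases, which, as you note, is a substantial additional input rather than a remark.
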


The Coxeter presentation of the Weyl group is given by $W_{\fg}=\{r_i, \; i \in I \;| \; (r_ir_j)^{m_{ij}}=1\}$ where $m_{ii}=1$ and for $i \neq j$, $m_{ij}=2,3,4, \text{ or }6$ depending on whether the nodes $i$ and $j$ in the Dynkin diagram $I$ are connected by $0, 1, 2, \text{ or } 3$ edges. The relations $(r_ir_j)^{m_{ij}}=1$ for $i \neq j$ are known as the ``braid relations''. In particular, Theorem \ref{thm:kas} then means that the action of the cactus group $C_{\fg}$ factors through the quotient by the braid relations, which we will call the {\bf reduced cactus group}. 

\vspace{0.5cm}

\subsection{Another appearance of the cactus action}

For $\fg=\mgl_n$, the cactus group action from Section \ref{sec:action} is the same as the action on Gelfand--Tsetlin patterns studied by Berenstein and Kirillov in \cite{BK}. More specifically, let $K=K_n$ denote the set of all triangular arrays:
\[x=(x_{ij} \in \Z \; | \; 1 \leq i \leq j \leq n)=(x^{(n)},x^{(n-1)},\hdots,x^{(1)}) \]

\noindent where $x^{(k)}=(x_{1k},\hdots,x_{kk}) \in \Z^k$ and furthermore:
$$\begin{array}{lc	ll} x_{ij} &\geq& x_{i+1j+1}, \; &1 \leq i \leq j \leq n-1 \\
	 x_{ij} &\geq& x_{ij-1}, \; &1 \leq i \leq j \leq n \\
	  x_{ij}&\geq& 0, \; &1 \leq i \leq j \leq n.
\end{array}$$

 For any $x \in K$, denote by  $\beta(x)=(\beta_1(x),\hdots,\beta_n(x)) \subset \Z^n$ the vector given by $\beta_j(x)=|x^{(j)}|-|x^{(j-1)}|$, where $|x^{(j)}|=\sum^j_{i=1}x_{ij}$ for $j=1,\hdots,n$ and $|x^{(0)}|=0$. For a partition $\la \in \Z^n$ and vector $\beta \in \Z^n$, consider the following subsets:
\[K_{\la}=\{x \in K \; | \; x^{(n)}=\la \} \quad \quad K^{\beta}_{\la}=\{x \in K_{\la} \; | \; \beta(x)=\beta \} \]
Then $K_{\la}\cong B_{\la}$, the crystal for the irreducible, $\la$--highest weight representation of $\mgl_n$  (see \cite{GZ85}, \cite{BK}). One way to see this is using tableaux of shape $\la$, whose crystal structure is described in Remark \ref{rem:crtableau}. The bijection is then realized by mapping $x \in K_{\la}$ to a tableau $T_x$ of shape $\la=x^{(n)}$ and filling determined consecutively by the partitions $\emptyset =x^{(0)} \subset x^{(1)} \subset x^{(2)} \subset \hdots \subset x^{(n)}=\la$. Namely, the boxes in $x^{(j)}\setminus x^{(j-1)}$ are labelled by $j$.

\begin{example}\label{ex:GZT} An example of the bijection $K_{\la} \cong B_{\la}$ for $\mgl_4$ where the partition is $\la=(5,3,3,1)$.
	
	\[x=\arraycolsep=3pt\def\arraystretch{0.9}\begin{array}{ccccccc}
	5 & & 3 & & 3 & & 1 \\
	& 4 & & 3 & & 1 & \\
	& & 4 & & 2 & & \\
	& & & 3 & & & 
	\end{array}	
	\hspace{0.3cm}\mapsto\hspace{0.3cm}
	T_x=\begin{minipage}[t]{0.2\textwidth}\vspace{-1.2cm}\begin{ytableau}
	1 & 1 & 1 & 2 & 4 \\
	2 & 2 & 3 \\
	3 & 4 & 4 \\
	4
	\end{ytableau}\end{minipage}\]
\end{example}

Berenstein and Kirillov define the following elementary transformations $t_j, \; 1 \leq j \leq n-1$, on triangular arrays by, for any $x=(x_{ik} \; | \; 1 \leq i \leq k \leq n)$: 
\[t_j(x_{ik})=\begin{cases}
x_{ik} & \text{ if } k \neq j \\
\min(x_{ik+1},x_{i-1k-1}) + \max(x_{i+1k+1},x_{ik-1})-x_{ik} & \text{ if } k=j
\end{cases}\]

\begin{proposition}[\cite{BK}, Proposition 1.2]
	For any $1 \leq j \leq n-1$,
	\[t_j: K \longrightarrow K	\quad \quad t_j:K_{\la} \longrightarrow K_{\la} \quad \quad t_j: K^{\beta}_{\la} \longrightarrow K^{(j \; j+1)\cdot \beta}_{\la}\]
\end{proposition}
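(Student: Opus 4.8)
The plan is to use that $t_j$ changes only the $j$-th row of the array, so the entire statement reduces to a local computation on the three consecutive rows $a=x^{(j+1)}$, $b=x^{(j)}$, $c=x^{(j-1)}$. Writing $a_i=x_{i,j+1}$, $b_i=x_{i,j}$, $c_i=x_{i,j-1}$ and using the standard boundary conventions $c_0=+\infty$, $c_j=-\infty$ (so that the out-of-range entries appearing in the definition of $t_j$ drop out of the relevant $\min$ and $\max$), the new row is $b_i'=U_i+L_i-b_i$ with $U_i=\min(a_i,c_{i-1})$ and $L_i=\max(a_{i+1},c_i)$ for $1\le i\le j$.

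First I would prove $t_j\colon K\to K$. The key observation is that the Gelfand--Tsetlin inequalities among the rows $a,b,c$ say exactly that $L_i\le b_i\le U_i$: the conditions $x_{ij}\ge x_{i+1,j+1}$ and $x_{ij}\ge x_{i,j-1}$ give $b_i\ge a_{i+1}$ and $b_i\ge c_i$, while $x_{i,j+1}\ge x_{ij}$ and $x_{i,j-1}\ge x_{i+1,j}$ give $b_i\le a_i$ and $b_i\le c_{i-1}$. Since $b_i'=L_i+U_i-b_i$ is simply the reflection of $b_i$ in the interval $[L_i,U_i]$, it again lies in $[L_i,U_i]$; reading off $a_{i+1}\le b_i'\le a_i$ and $c_i\le b_i'\le c_{i-1}$ recovers precisely the interlacing conditions relating the new row to the untouched rows $a$ and $c$, and nonnegativity follows from $b_i'\ge L_i\ge a_{i+1}\ge 0$. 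All inequalities not involving row $j$ are unchanged, so $t_j(x)\in K$. Since the top row $x^{(n)}$ is left fixed whenever $j\le n-1$, this also gives $t_j\colon K_\la\to K_\la$.

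It remains to track the weight. As $t_j$ alters only $|x^{(j)}|$, among the coordinates of $\beta$ only $\beta_j$ and $\beta_{j+1}$ can change, and their sum $|x^{(j+1)}|-|x^{(j-1)}|$ is fixed; thus it suffices to show $\sum_i(b_i'+b_i)=|x^{(j+1)}|+|x^{(j-1)}|$, which forces $\beta_j(t_jx)=\beta_{j+1}(x)$ and $\beta_{j+1}(t_jx)=\beta_j(x)$. Using $b_i'+b_i=U_i+L_i$ together with the identity $\min(u,v)+\max(u,v)=u+v$, I would pair the term $\min(a_{i+1},c_i)$ coming from $U_{i+1}$ with $\max(a_{i+1},c_i)$ coming from $L_i$, so that the sum collapses to $\sum_{i=1}^{j+1}a_i+\sum_{i=1}^{j-1}c_i$, with the boundary terms $a_1$ and $a_{j+1}$ surviving from the conventions at $i=1$ and $i=j$. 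This yields $t_j\colon K^{\beta}_{\la}\to K^{(j\,j+1)\cdot\beta}_{\la}$.

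The only real obstacle is the bookkeeping at the edge indices $i=1$ and $i=j$: it is there that the conventions $c_0=+\infty$, $c_j=-\infty$ must be invoked, both to verify that $b_1'$ and $b_j'$ still satisfy the outermost interlacing constraints and to account for the two surviving boundary terms in the weight sum. Once the reflection-in-an-interval structure is identified, the remaining verifications are routine.
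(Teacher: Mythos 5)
The paper does not prove this proposition; it is quoted directly from Berenstein--Kirillov (\cite{BK}, Proposition 1.2), so there is no internal proof to compare against. Your argument is correct and is essentially the standard verification: the Gelfand--Tsetlin interlacing conditions on the three rows $x^{(j+1)},x^{(j)},x^{(j-1)}$ are exactly $\max(a_{i+1},c_i)\le b_i\le\min(a_i,c_{i-1})$, so $t_j$ is reflection in that interval and preserves $K$ and $K_\la$, while the telescoping identity $\sum_{i=1}^{j}\bigl(U_i+L_i\bigr)=|x^{(j+1)}|+|x^{(j-1)}|$ (with the boundary terms $a_1$ and $a_{j+1}$) gives the swap of $\beta_j$ and $\beta_{j+1}$. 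The boundary conventions $c_0=+\infty$, $c_j=-\infty$ are handled correctly, so the proof is complete as written.
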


\vspace{0.3cm}

Moreover, if $G_n$ denotes the group generated by $t_j, \; 1\leq j \leq n-1$, then we have:

\begin{theorem}[\cite{BK}]~\\
	\vspace{-0.5cm}
	\begin{enumerate}
		
		\item An alternative set of generators for $G_n$ is $q_i=t_1(t_2t_1)\hdots t_it_{i-1} \hdots t_1$ for $1 \leq i \leq n-1$.
		
		\item For a tableau $T \in B_{\la}\cong K_{\la}$, the involution $q_{i-1}$ acts nontrivially only on $T_{\leq i}$ (the subtableau with boxes labelled by at most $i$) and its action coincides with that of the partial Sch\"{u}tzenberger involution on tableaux. 
	\end{enumerate}
\end{theorem}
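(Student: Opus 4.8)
The plan is to transport everything to semistandard tableaux via the bijection $K_{\la}\cong B_{\la}$ and to recognize the elementary transformations $t_j$ as Bender--Knuth involutions, after which Part 2 becomes the classical factorization of the Sch\"utzenberger involution into such involutions, and Part 1 follows formally. First I would verify that, under $K_{\la}\cong B_{\la}$, the map $t_j$ coincides with the $j$-th Bender--Knuth involution $\mathrm{BK}_j$ acting on $T \in B_{\la}$: the row $x^{(j)}=(x_{1j},\dots,x_{jj})$ of the pattern records the shape of the subtableau $T_{\leq j}$, and the min/max formula defining $t_j$ is exactly the piecewise-linear toggle that rearranges the $j$'s and $(j+1)$'s of $T$ within the fixed shapes $x^{(j-1)}\subseteq x^{(j)}\subseteq x^{(j+1)}$, i.e.\ the involution realizing the content transposition $(j\;j+1)$ from the Proposition above. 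Since $t_j$ alters only $x^{(j)}$, the word $q_{i-1}=t_1(t_2t_1)\cdots(t_{i-1}\cdots t_1)$ changes only the internal shapes $x^{(1)},\dots,x^{(i-1)}$; it therefore fixes the outer shape $x^{(i)}$ of $T_{\leq i}$ and all entries $>i$, which already yields the assertion that $q_{i-1}$ acts nontrivially only on $T_{\leq i}$.

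For the heart of Part 2 I would set $\rho_k=t_kt_{k-1}\cdots t_1$, so that $q_{i-1}=\rho_1\rho_2\cdots\rho_{i-1}$ and $q_{i-1}=q_{i-2}\,\rho_{i-1}$, and prove by induction on $i$ that $q_{i-1}$ equals the partial Sch\"utzenberger involution $\xi_{\leq i}$ (evacuation of $T_{\leq i}$). The base case $i=1$ is trivial, since $T_{\leq 1}$ is a single row of $1$'s and $q_0$ is the empty product. The inductive step rests on the classical relation between evacuation and promotion, namely $\xi_{\leq i}=\xi_{\leq i-1}\circ\rho_{i-1}$, where $\rho_{i-1}=t_{i-1}\cdots t_1$ is the promotion operator cyclically shifting the values $1,\dots,i$; combining this with the inductive hypothesis $\xi_{\leq i-1}=q_{i-2}$ gives $\xi_{\leq i}=q_{i-2}\rho_{i-1}=q_{i-1}$. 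As evacuation is an involution, this simultaneously proves that each $q_{i-1}$ is an involution.

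I expect the main obstacle to be establishing the promotion--evacuation recursion $\xi_{\leq i}=\xi_{\leq i-1}\circ\rho_{i-1}$ together with the identification $t_j=\mathrm{BK}_j$; everything else is formal bookkeeping. I would prove the recursion either through the jeu-de-taquin description of evacuation---realizing one promotion of the $\leq i$ part as the product $\rho_{i-1}$ of local Bender--Knuth moves---or directly on Gelfand--Tsetlin patterns, tracking how the chain $\emptyset=x^{(0)}\subset\cdots\subset x^{(i)}$ is reversed and complemented inside the fixed shape $x^{(i)}$. This step is delicate precisely because it links the purely local min/max toggles $t_j$ with the global sliding description of evacuation.

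Finally, Part 1 follows formally from Part 2. The inclusion $\langle q_1,\dots,q_{n-1}\rangle\subseteq G_n$ is immediate, as each $q_i$ is a word in the $t_j$. For the reverse inclusion I would use the telescoping identity $q_i=q_{i-1}\rho_i$ and the involutivity of the $q_i$ just established: these give $\rho_i=t_i\cdots t_1=q_{i-1}q_i$, hence $t_i=\rho_i\rho_{i-1}^{-1}=q_{i-1}q_i\,q_{i-1}q_{i-2}$ (with $q_0=\mathrm{id}$). Thus every $t_j$ lies in $\langle q_1,\dots,q_{n-1}\rangle$, and the two generating sets produce the same group $G_n$.
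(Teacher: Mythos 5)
The paper does not prove this statement at all: it is quoted verbatim from Berenstein--Kirillov \cite{BK} as a citation, so there is no internal proof to compare your attempt against. Judged on its own merits, your outline is a correct reconstruction of the standard argument. The identification of $t_j$ with the $j$-th Bender--Knuth involution is right: the entry $x_{ij}$ is constrained to the interval $\left[\max(x_{i+1,j+1},x_{i,j-1}),\min(x_{i,j+1},x_{i-1,j-1})\right]$, and the defining formula for $t_j$ reflects it within that interval, which is exactly the piecewise-linear toggle realizing the swap of the contents in letters $j$ and $j+1$. Your observation that $q_{i-1}$ touches only $x^{(1)},\hdots,x^{(i-1)}$, hence fixes $x^{(i)},\hdots,x^{(n)}$ and all boxes labelled $>i$, cleanly gives the first half of Part 2. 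The induction via the promotion--evacuation recursion $\xi_{\leq i}=\xi_{\leq i-1}\circ\rho_{i-1}$ is the classical route (Sch\"utzenberger, Haiman, Stanley), and you correctly identify it, together with $t_j=\mathrm{BK}_j$, as the only nontrivial content; both are established facts, though in a complete write-up you would need to fix the composition convention (the paper applies words left to right, as in its $q_2=t_1t_2t_1$ example) and verify the recursion in that convention. Your derivation of Part 1 from Part 2 is correct and checks out on small cases ($t_2=q_1q_2q_1$, $t_3=q_2q_3q_2q_1$), with the involutivity of the $q_i$ supplied by Part 2 exactly where it is needed. The only caveat is that the two classical ingredients are asserted rather than proved, but since you flag them explicitly and they are standard, this is an outline-level gap rather than a mathematical one.
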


This implies in particular that the action of $q_i \in G_n$ on Gelfand-Tsetlin patterns $K_{\la} \cong B_{\la}$ agrees with the action of the element $s_{\{1,2,\hdots,i\}} \in C_{\mgl_n}$ corresponding to the Dynkin subdiagram with nodes $1$ to $i$, using the standard numbering. These elements in fact generate the whole cactus group, as discussed in the proof of Theorem \ref{thm:agree} (there we use the shorter notation $s_{1,i}:=s_{\{1,2,\hdots,i-1\}}$, and $s_{i,j}:=s_{\{i,i+1,\hdots,j-1\}}$).

\begin{example}Consider $q_2=t_1t_2t_1 \in G_4$. We want to show that it acts the same way on the Gelfand-Tsetlin pattern $x$ from Example \ref{ex:GZT} as $s_{\{1,2\}} \in C_{\mgl_4}$ does on the corresponding tableau $T_x$. Starting from $x$ and applying $q_2$, we get:
	\[\arraycolsep=2pt\def\arraystretch{0.7}\begin{array}{ccccccc}
	5 & & 3 & & 3 & & 1 \\
	& 4 & & 3 & & 1 & \\
	& & 4 & & 2 & & \\
	& & & 3 & & & 
	\end{array}\xrightarrow{t_1}
	\begin{array}{ccccccc}
	5 & & 3 & & 3 & & 1 \\
	& 4 & & 3 & & 1 & \\
	& & 4 & & 2 & & \\
	& & & 3 & & & 
	\end{array}\xrightarrow{t_2}
	\begin{array}{ccccccc}
	5 & & 3 & & 3 & & 1 \\
	& 4 & & 3 & & 1 & \\
	& & 3 & & 2 & & \\
	& & & 3 & & & 
	\end{array}\xrightarrow{t_1}
	\begin{array}{ccccccc}
	5 & & 3 & & 3 & & 1 \\
	& 4 & & 3 & & 1 & \\
	& & 3 & & 2 & & \\
	& & & 2 & & & 
	\end{array}\]
	On the other hand, the action of $s_{\{1,2\}}$ only affects $(T_x)_{\leq 3}$, so we will study how that subtableau changes:
	
	\[(T_x)_{\leq 3}=\begin{minipage}[t]{0.2\textwidth}\vspace{-0.6cm}\ytableausetup{nosmalltableaux}\begin{ytableau}
	1 & 1 & 1 & 2 \\
	2 & 2 & 3 \\
	3
	\end{ytableau}\end{minipage}=f_1f_2\; \begin{minipage}[t]{0.2\textwidth}\vspace{-0.6cm}\begin{ytableau}
	1 & 1 & 1 & 1 \\
	2 & 2 & 2 \\
	3
	\end{ytableau}\end{minipage}\]
	\vspace{0.3cm}
	\[\Longrightarrow \; s_{\{1,2\}}((T_x)_{\leq 3})=e_2e_1\; \begin{minipage}[t]{0.2\textwidth}\vspace{-0.6cm}\begin{ytableau}
	1 & 2 & 2 & 3 \\
	2 & 3 & 3 \\
	3
	\end{ytableau}\end{minipage}=
	\begin{minipage}[t]{0.2\textwidth}\vspace{-0.6cm}\begin{ytableau}
	1 & 1 & 2 & 3 \\
	2 & 2 & 3 \\
	3
	\end{ytableau}\end{minipage}\]
	
	\vspace{0.2cm}
	
	Therefore, since the remaining part of the tableau does not change under the action of $s_{\{1,2\}}$:
	
		\vspace{0.2cm}
		
	\[s_{\{1,2\}}(T_x)=\begin{minipage}[t]{0.2\textwidth}\vspace{-1.2cm}\ytableausetup{nosmalltableaux}\begin{ytableau}
	1 & 1 & 2 & 3 & 4 \\
	2 & 2 & 3 \\
	3 & 4 & 4 \\
	4
	\end{ytableau}\end{minipage}\]
	
	This is precisely the tableau we obtain from $q_2(x)$ under the bijection above.
\end{example}

\begin{remark}
	In \cite{CGP16}, the authors use semistandard growth diagrams to relate the cactus group and the Berenstein--Kirillov group, and find implications between the relations on each side. 
	
	The above discussion together with Theorem \ref{prop:action} in particular implies one of the main theorems in \cite{CGP16}:
	\begin{theorem}[\cite{CGP16}, Theorem 1.4]
		The map $C_n \rightarrow G_n$ given by $s_{i,j} \mapsto q_{j-1}q_{j-i}q_{j-1}$ defines a group homomorphism.
	\end{theorem}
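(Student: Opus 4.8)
The plan is to recognize the asserted map as the crystal action $\rho\colon C_n \to \mathrm{Sym}(K)$ of Theorem~\ref{prop:action}, corestricted to $G_n$. Both groups act on the set $K = \bigsqcup_\la K_\la$ of all Gelfand--Tsetlin patterns by bijections: $C_n$ acts through $s_J \mapsto \xi_{B_J}$, while $G_n = \langle t_1,\ldots,t_{n-1}\rangle$ sits inside $\mathrm{Sym}(K)$ by its very definition. The preceding discussion identifies, for each $1 \le k \le n-1$, the bijection induced by the initial-interval generator $s_{1,k+1} = s_{\{1,\ldots,k\}}$ with that induced by $q_k$; that is, $\rho(s_{1,k+1}) = q_k$ as elements of $\mathrm{Sym}(K)$.

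First I would rewrite a general generator $s_{i,j} = s_{\{i,\ldots,j-1\}}$ of $C_n$ as a word in the initial-interval generators, working abstractly inside $C_n$. Put $J = \{1,\ldots,j-1\}$ and $J' = \{1,\ldots,j-i\} \subseteq J$. Cactus relation~(\ref{rel:parcomm}) gives $s_J s_{J'} = s_{\theta_J(J')} s_J$, and relation~(\ref{rel:invol}) then yields $s_J s_{J'} s_J = s_{\theta_J(J')}$. The only computation required is the index bookkeeping for $\theta_J$: since $J$ is a type-$A_{j-1}$ subdiagram, $\theta_J$ is the flip $k \mapsto j-k$, so $\theta_J(\{1,\ldots,j-i\}) = \{i,\ldots,j-1\}$. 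Hence inside $C_n$ we have $s_{i,j} = s_{\{1,\ldots,j-1\}}\, s_{\{1,\ldots,j-i\}}\, s_{\{1,\ldots,j-1\}} = s_{1,j}\, s_{1,j-i+1}\, s_{1,j}$.

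Next I would apply the homomorphism $\rho$ together with the identifications $\rho(s_{1,j}) = q_{j-1}$ and $\rho(s_{1,j-i+1}) = q_{j-i}$, which gives
\[
\rho(s_{i,j}) = \rho(s_{1,j})\,\rho(s_{1,j-i+1})\,\rho(s_{1,j}) = q_{j-1}\,q_{j-i}\,q_{j-1} \in G_n .
\]
Thus $\rho$ carries every generator $s_{i,j}$ of $C_n$ into the subgroup $G_n \le \mathrm{Sym}(K)$, so $\rho$ corestricts to a group homomorphism $C_n \to G_n$; by the displayed computation this corestriction is exactly the map $s_{i,j} \mapsto q_{j-1} q_{j-i} q_{j-1}$ of the statement. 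Since $\rho$ is already a homomorphism by Theorem~\ref{prop:action}, no separate verification of the cactus relations inside $G_n$ is needed. (For $i=1$ this specializes to $s_{1,j} \mapsto q_{j-1}^{3} = q_{j-1}$, consistent with $\rho(s_{1,j}) = q_{j-1}$ because each $q_k$ is an involution.)

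I do not expect a serious obstacle: the real content sits in Theorem~\ref{prop:action}, which guarantees that $s_J \mapsto \xi_{B_J}$ is an honest $C_n$-action, and in the already-established matching of $s_{\{1,\ldots,k\}}$ with $q_k$. The two points to be careful about are the $\theta_J$ index computation and confirming that the identification $\rho(s_{1,k+1}) = q_k$ holds simultaneously on every component $K_\la$ of $K$, so that one has an equality of operators on all of $K$ rather than merely on a single crystal; this is precisely what the partial Sch\"utzenberger involution statement supplies.
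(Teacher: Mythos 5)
Your proposal is correct and follows essentially the same route the paper intends: the paper derives this theorem as an immediate consequence of the identification $\rho(s_{1,i})=q_{i-1}$ (via the Berenstein--Kirillov theorem on partial Sch\"utzenberger involutions), Theorem~\ref{prop:action}, and the cactus-relation decomposition $s_{p,q}=s_{1,q}s_{1,q+1-p}s_{1,q}$ stated in the proof of Theorem~\ref{thm:agree}. You have merely written out the $\theta_J$ index bookkeeping and the corestriction argument that the paper leaves implicit.
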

\end{remark}

In the next section we will focus our attention on $\mgl_k$--crystals and their tensor products.

\section{Tensor Products of $\mgl_k$--crystals}
\label{sec:tensor}
Let us first generalize the tensor product rule for $\fg$--crystals in Section \ref{sec:prelim} from two to any number of crystals.
\subsection{General tensor product rule}\label{sub:gtp}~\\
If $B_1, \hdots, B_m$ are $\fg$--crystals, then so is $B_1 \otimes \hdots \otimes B_m$ with the following operations (see \cite{Jo}): 
\vspace{0.5cm}

\begin{enumerate}
	\itemsep=2em
	\item 
	$\begin{aligned}[t]
	\mathrm{wt}(b_1 \otimes \hdots \otimes b_m) = \mathrm{wt}(b_1) + \hdots + \mathrm{wt}(b_m)
	\end{aligned}$
	\item
	$\begin{aligned}[t]
	\epsilon_i(b_1 \otimes \hdots \otimes b_m) &= \mathrm{max}_k\{\epsilon^k_i(b_1 \otimes \hdots \otimes b_m),0\}, \text{where} \\
	& \\
	\epsilon^k_i(b_1 \otimes \hdots \otimes b_m) &= \epsilon_i(b_k) - \left\langle \alpha^{\vee}_i, \mathrm{wt}(b_1) + \hdots + \mathrm{wt}(b_{k-1}) \right\rangle
	\end{aligned}$
	\item
	$\begin{aligned}[t]
	\phi_i(b_1 \otimes \hdots \otimes b_m) &= \mathrm{max}_k\{\phi^k_i(b_1 \otimes \hdots \otimes b_m),0\}, \text{where} \\
	& \\
	\phi^k_i(b_1 \otimes \hdots \otimes b_m) &= \phi_i(b_k) + \left\langle \alpha^{\vee}_i, \mathrm{wt}(b_{k+1}) + \hdots + \mathrm{wt}(b_{m}) \right\rangle
	\end{aligned}$
	\item 
	$\begin{aligned}[t]
	e_i(b_1 \otimes \hdots \otimes b_m) &= b_1 \otimes \hdots \otimes e_i(b_s) \otimes \hdots \otimes b_m, \text{where} \\
	& \\
	s \in \{1, \hdots, m\} & \text{ is the smallest number such that } \epsilon^s_i=\epsilon_i
	\end{aligned}$
	\item
	$\begin{aligned}[t]
	f_i(b_1 \otimes \hdots \otimes b_m) &= b_1 \otimes \hdots \otimes f_i(b_t) \otimes \hdots \otimes b_m, \text{where} \\
	& \\
	t \in \{1, \hdots, m\} & \text{ is the largest number such that } \phi^t_i=\phi_i
	\end{aligned}$
\end{enumerate}

\vspace{0.3cm}

\subsection{The $\mgl_n \times \mgl_m$--crystal $\Lambda^NB_{n,m}$}~\\
Consider $\Lambda^N B_{n,m}$, the set of $n \times m$ matrices with entries in $\{0,1\}$ and precisely $N$ $1$'s, which is the underlying set for the crystal of $\Lambda^N(\C^n \otimes \C^m)$ both as a $\mgl_n$-- and as a $\mgl_m$--representation.

Indeed, we have both a $\mgl_n$-- and a $\mgl_m$--crystal structure on $\Lambda^N B_{n,m}$ coming from the decompositions:
\begin{equation}
\label{eqn:col}
\begin{aligned}
\Lambda^N B_{n,m} \cong& \bigsqcup_{\underline{k} \in \mathbb{N}^m, |\underline{k}|=N} \mathcal{B}^{\mgl_n}_{\varpi_{k_1}} \otimes \hdots \otimes \mathcal{B}^{\mgl_n}_{\varpi_{k_m}} \\
\left[\begin{array}{ccc}
| & & | \\
c_1 & \hdots & c_m \\
| & & | 
\end{array}\right] &\mapsto c_m \otimes \hdots \otimes c_1
\end{aligned}
\end{equation}

\noindent where in Equation (\ref{eqn:col}) above, $\mathcal{B}^{\mgl_n}_{\varpi_i}$ is the crystal for the fundamental $\mgl_n$--representation $\Lambda^i(\C^n)$ corresponding to the fundamental weight $\varpi_i$, and in Equation (\ref{eqn:row}) below $\mathcal{B}^{\mgl_m}_{\varpi_j}$ is the crystal for the $\mgl_m$--representation $\Lambda^j(\C^m)$:

\begin{equation}
\label{eqn:row}
\begin{aligned}
\Lambda^N B_{n,m} \cong& \bigsqcup_{\underline{l} \in \mathbb{N}^n, |\underline{l}|=N} \mathcal{B}^{\mgl_m}_{\varpi_{l_1}} \otimes \hdots \otimes \mathcal{B}^{\mgl_m}_{\varpi_{l_n}} \\
\left[\begin{array}{ccc}
\textrm{\bf{---}} & r_1 & \textrm{\bf{---}} \\
& \vdots & \\
\textrm{\bf{---}} & r_n & \textrm{\bf{---}}
\end{array}\right] &\mapsto r_1 \otimes \hdots \otimes r_n
\end{aligned}
\end{equation}



We will denote the $\mgl_n$--crystal maps (which act on the columns of the matrices) by $C\ep_i$, $C\phi_i$, $Ce_i$, $Cf_i$ and $C\text{wt}$, and the $\mgl_m$--crystal maps by $R\ep_i, R\phi_i, Re_i, Rf_i$ and $R\text{wt}$, all coming from the rule in Section \ref{sub:gtp}.

\begin{proposition}[See also \cite{vL}, Lemma 1.3.7 and \cite{DK}, Theorem 2]
	\label{prop:mor}
	The $\mgl_n$-- and $\mgl_m$--crystal structures of $\Lambda^N B_{n,m}$ commute, i.e. the Kashiwara operators $Re_i$ and $Rf_i$, $i \in \{1,\hdots,m-1\}$,  are $\mgl_n$--crystal morphisms and analogously $Ce_j$ and $Cf_j, \; j \in \{1,\hdots,n-1\}$ are $\mgl_m$--crystal morphisms. This gives $\Lambda^N B_{n,m}$ the structure of a $\mgl_n \times \mgl_m$--crystal. 
\end{proposition}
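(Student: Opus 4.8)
The plan is to reduce the statement to the commutation of the Kashiwara operators and then to a rank--one situation in each factor. It suffices to show that each row operator $Re_i,Rf_i$ is a strict morphism of $\mgl_n$--crystals; the dual assertion that $Ce_j,Cf_j$ are $\mgl_m$--morphisms records the same family of identities $Rf_i\,Cf_j = Cf_j\,Rf_i$ (and the analogues with $e$'s) read in the opposite order, so the two halves of the Proposition are equivalent. The weight conditions are immediate: $Cf_j$ moves a single $1$ from row $j$ to row $j+1$ \emph{within a fixed column}, so it preserves every column sum and hence $R\text{wt}$, and dually $Rf_i$ preserves $C\text{wt}$. Once the operators are known to commute, preservation of $R\ep_i,R\phi_i$ (resp.\ $C\ep_j,C\phi_j$) follows from their description as maximal string lengths together with $F(0)=0$, and the final claim that $\Lambda^N B_{n,m}$ is a $\mgl_n\times\mgl_m$--crystal is then automatic. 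So the heart of the matter is the commutation of the lowering (equivalently raising) operators.

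The key device is a locality--plus--bifunctoriality reduction. Write $P(a,b)$ for the assertion of the Proposition for $\Lambda^\bullet B_{a,b}$ (all $N$ at once). I would first observe that, although $Rf_i$ is computed from the full row--tensor signature rule of Section \ref{sub:gtp}, it reads and alters only the entries in columns $i$ and $i+1$; in the column realization of Equation (\ref{eqn:col}) these are precisely the two \emph{adjacent} tensor factors $c_{i+1}\otimes c_i$, and $Rf_i$ is the identity on every other factor. Letting $G$ denote the induced map on $\mathcal{B}^{\mgl_n}_{\varpi_{k_{i+1}}}\otimes \mathcal{B}^{\mgl_n}_{\varpi_{k_i}}$ (namely $Rf_1$ on the $n\times 2$ submatrix, an instance of $P(n,2)$), I would invoke that the tensor product of $\mgl_n$--crystals is a bifunctor: tensoring a strict morphism $G$ with identity factors on either side yields a strict morphism. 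Hence $Rf_i=\mathrm{id}\otimes G\otimes \mathrm{id}$ is an $\mgl_n$--morphism as soon as $G$ is, which is exactly $P(n,2)$. Running the same argument in the row direction — $Cf_j$ touches only the adjacent row--tensor factors $r_j\otimes r_{j+1}$ — reduces $P(n,2)$ to the $2\times 2$ case $P(2,2)$ via bifunctoriality of $\mgl_2$--crystal tensor products. Thus $P(2,2)\Rightarrow P(n,2)\Rightarrow P(n,m)$.

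Finally I would dispatch the base case $P(2,2)$ by a direct finite check: $\Lambda^N B_{2,2}$ has $\binom{4}{N}$ elements for $0\le N\le 4$, and one verifies on each that the single column operator $Cf_1$ and the single row operator $Rf_1$ commute, which is a routine case--by--case computation. I expect the genuine obstacle to be the locality/bifunctoriality step rather than this computation: the operator $Rf_i$ is defined through a signature rule ranging over all $n$ rows, so from the $\mgl_n$ side it appears non-local, and the crux is to see that in the column realization it is supported on two adjacent tensor factors. Pinning down that tensoring a strict crystal morphism — one permitted to send elements to $0$ — with identities again preserves strictness (most safely checked directly from the tensor product operator rule of Section \ref{sub:gtp}, since the choice of acting factor depends on the replaced block only through the weights and the $\ep_i,\phi_i$ that $G$ preserves) is the one point that warrants care.
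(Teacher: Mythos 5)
Your argument is correct in outline but follows a genuinely different route from the paper. The paper proves the proposition by brute force on the signature formulas: it computes $C\ep^{\widetilde{k}}_j$ explicitly, shows that $Re_i$ acting in row $l$ can only disturb the four quantities $C\ep^{\widetilde{i}}_{l-1},C\ep^{\widetilde{i+1}}_{l-1},C\ep^{\widetilde{i}}_l,C\ep^{\widetilde{i+1}}_l$ and that the maximum is unchanged (Lemma \ref{lem:recep}, and its mirror Lemma \ref{lem:cerep}), and then establishes the commutation $Ce_jRe_i=Re_iCe_j$ by a case analysis on the relative positions of the acting row and column, resolving the two interacting cases on explicit $2\times2$ submatrices (Lemma \ref{lem:com}). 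You instead isolate two structural principles --- that $Rf_i$ reads and alters only columns $i,i+1$, hence is $\mathrm{id}\otimes G\otimes\mathrm{id}$ on adjacent factors of the decomposition (\ref{eqn:col}), and that tensoring a crystal morphism with identities yields a crystal morphism --- and use them to collapse everything to a finite check on $\Lambda^{\bullet}B_{2,2}$. Both proofs ultimately live on $2\times 2$ blocks, but yours replaces the explicit bookkeeping with the functoriality of $\otimes$; what the paper's version buys is that the explicit local formulas (e.g.\ exactly which $C\ep^{\widetilde{k}}_j$ move, and the $(x,y)$ constraints forced by the choice of acting row) are reused later in Lemma \ref{lem:com}, Lemma \ref{lem:part} and Section \ref{sec:skew}, whereas your version buys brevity and makes clear why the statement is really a rank-one-times-rank-one phenomenon. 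Two points in your write-up deserve tightening. First, the claim that the two halves of the proposition ``record the same family of identities read in the opposite order'' is only true of the intertwining relations; the preservation of $C\ep_j,C\phi_j$ by $Rf_i$ is \emph{not} a restatement of the preservation of $R\ep_i,R\phi_i$ by $Cf_j$, and deducing one from the other requires exactly the string-length bootstrap you mention (one needs the conditional commutation \emph{together with} the already-known preservation of the other side's $\ep,\phi$ to propagate nonvanishing along $Ce_j^t(M)$; conditional commutation alone is not enough). Since you invoke this equivalence twice (at the $(n,2)$ level and again to recover the column half of $P(n,m)$), it should be stated and proved as a lemma. Second, the base case $P(2,2)$ must verify not just that $Rf_1$ and $Cf_1$ commute on all sixteen elements but also that each preserves the other structure's $\ep_1$ and $\phi_1$; with that understood, the bifunctoriality step (including the fact that the factor selected by the signature rule always carries a nonzero $f_i$, so the conditional morphism axiom suffices) goes through as you indicate.
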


\begin{proof}
	It suffices to check the following for each $M \in \Lambda^N B_{n,m}$ such that, in the left column of equations below we assume $Re_i(M) \neq 0$, in the right column $Rf_i(M) \neq 0$, and additionally $Ce_j(M)\neq 0$ or $Cf_j(M) \neq 0$ if $Ce_j$, respectively $Cf_j$ appears in the equation:
	\begin{align}
		C\wt Re_i &= C\wt & C\wt Rf_i &= C\wt \label{eqn:wt} \\
		C\epsilon_j Re_i &= C\epsilon_j & C\epsilon_j Rf_i &= C\epsilon_j \label{eqn:ep} \\
		C\phi_j Re_i &= C\phi_j & C\phi_j Rf_i &= C\phi_j \label{eqn:phi} \\
		Ce_j Re_i &=Re_i Ce_j  &  Ce_j Rf_i &= Rf_i Ce_j \label{eqn:rece} \\
		Cf_j Re_i &= Re_i Cf_j  &  Cf_j Rf_i &= Rf_i Cf_j \label{eqn:recf}
	\end{align}
	
	We also need to include Equations (\ref{eqn:wt}), (\ref{eqn:ep}), and (\ref{eqn:phi}) with the roles of ``R'' and ``C'' switched. Those verifications are analogous and we will omit them, with the exception of the analogue of (\ref{eqn:ep}), which we show in Lemma \ref{lem:cerep}.
	
	Note that in each line, once we know the equation for $Re_i$, the corresponding one for $Rf_i$ follows from:
	$$Rf_i(M)=M' \neq 0 \; \Rightarrow \; M=Re_i(M')$$
	For instance, if the first part of Equation (\ref{eqn:wt}) holds and $Rf_i(M)=M' \neq 0$, then
	$C\wt(Rf_i(M))=C\wt(M')=C\wt(Re_i(M'))=C\wt(M)$. Similarly, using that $Ce_j$ and $Cf_j$ are partial inverses, Equations (\ref{eqn:ep}) and (\ref{eqn:rece}) imply (\ref{eqn:recf}). Indeed, by definition of the Kashiwara operators:
	$$Cf_j(M)=M''\neq 0 \; \Rightarrow M=Ce_j(M'')$$
	We also assume that $Re_i(M)\neq 0$. This implies that $Re_i(M'') \neq 0$, since $R\ep_i(M) \neq 0$ and $Cf_j$ preserves $R\ep_i$ by the analogue of Equation (\ref{eqn:ep}) with $R$ and $C$ switched, so $R\ep_i(M'')=R\ep_i(Cf_j(M))=R\ep_i(M) \neq 0$. Then we can apply Equation (\ref{eqn:rece}) for $M''$ to get:
	\begin{align*}Cf_jRe_i(M)&=Cf_jRe_i(Ce_j(M''))\stackrel{(\ref{eqn:rece})}{=}Cf_jCe_j(Re_i(M'')) \\
		&=Re_i(M'')=Re_iCf_j(M)
	\end{align*}
	
	Furthermore, Equations (\ref{eqn:wt}) and (\ref{eqn:ep}) imply (\ref{eqn:phi}). Indeed, for $Re_i(M) \neq 0$:
	\begin{align*}C\phi_j(Re_i(M)) &=C\ep_j(Re_i(M))+\left\langle C\wt(Re_i(M)),C\alpha^{\vee}_j \right\rangle \\
		&=C\ep_j(M)+\left\langle C\wt(M),C\alpha^{\vee}_j \right\rangle = C\phi_j(M)
	\end{align*}
	
	We will break the proofs of Equations (\ref{eqn:wt}), (\ref{eqn:ep}), and (\ref{eqn:rece}) into several lemmas below. 
\end{proof}

\begin{lemma}
	The operator $Re_i$, $i \in \{1,\hdots,m-1\}$, preserves the $\mgl_n$--weight of each element:
	\[ \forall \; M \in \Lambda^N B_{n,m}, \;  Re_i(M) \neq 0 \Rightarrow C\emph{wt}(Re_i(M)) = C\emph{wt}(M)\]
\end{lemma}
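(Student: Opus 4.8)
The plan is to unwind the definitions of $C\wt$ and $Re_i$ and to observe that $Re_i$ merely relocates a single $1$ within one row of $M$, an operation that cannot change any row-sum.

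First I would identify $C\wt(M)$ explicitly. Via the column decomposition (\ref{eqn:col}), $M$ corresponds to $c_m \otimes \cdots \otimes c_1$, where each column $c_j$ is a basis element of the fundamental $\mgl_n$--crystal $\mathcal{B}^{\mgl_n}_{\varpi_{k_j}} = \Lambda^{k_j}(\C^n)$ of weight $\sum_{r : M_{rj}=1} \varepsilon_r$, expressed in the standard basis $\varepsilon_1,\dots,\varepsilon_n$ of the $\mgl_n$--weight lattice. Since weights add under tensor products, $C\wt(M) = \sum_{r=1}^n \rho_r(M)\,\varepsilon_r$, where $\rho_r(M)$ denotes the number of $1$'s in row $r$. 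Hence the claim is equivalent to showing that $Re_i$ preserves every row-sum $\rho_r$.

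Next I would describe $Re_i$ through the tensor product rule of Section \ref{sub:gtp} applied to the row decomposition (\ref{eqn:row}), $M \mapsto r_1 \otimes \cdots \otimes r_n$. By the rule for $e_i$ in Section \ref{sub:gtp}, $Re_i$ acts as the fundamental operator $e_i$ on a single tensor factor $r_s$ (the row selected by the signature rule) and fixes all other rows; and since $Re_i(M)\neq 0$ by hypothesis, $e_i(r_s)\neq 0$ on that factor. On the fundamental $\mgl_m$--crystal $\mathcal{B}^{\mgl_m}_{\varpi_{l_s}}$, the row $r_s$ is recorded by the set of columns in which it carries a $1$, and for the simple root $\alpha_i = \varepsilon_i - \varepsilon_{i+1}$ the operator $e_i$, when nonzero, performs exactly the transfer $M_{s,i+1}\colon 1\mapsto 0$ and $M_{s,i}\colon 0\mapsto 1$, moving the single $1$ from position $(s,i+1)$ to position $(s,i)$.

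Finally I would conclude: this move deletes one $1$ from row $s$ and inserts one $1$ into the same row $s$, while leaving every other row untouched, so $\rho_r(Re_i(M)) = \rho_r(M)$ for all $r$ and therefore $C\wt(Re_i(M)) = C\wt(M)$. There is no genuine obstacle here beyond bookkeeping; the only care required is in fixing the indexing conventions (that $\alpha_i = \varepsilon_i - \varepsilon_{i+1}$ corresponds to shifting a $1$ from column $i+1$ to column $i$) and in noting that, regardless of which row the signature rule selects, the action of $Re_i$ is internal to that one row. Once the identification $C\wt(M) = \sum_r \rho_r(M)\,\varepsilon_r$ is recorded, the invariance is immediate because $Re_i$ conserves the number of $1$'s in each row.
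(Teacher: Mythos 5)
Your proposal is correct and follows essentially the same route as the paper: identify $C\wt(M)$ with the vector of row-sums of $M$, observe that $Re_i$ only transfers a single $1$ within one row (from column $i+1$ to column $i$), and conclude that every row-sum, hence the $\mgl_n$--weight, is unchanged. The extra justification you give for the formula $C\wt(M)=\sum_r \rho_r(M)\,\varepsilon_r$ via the column decomposition is harmless detail that the paper simply asserts.
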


\begin{proof}Given a matrix $M$, if $Re_i$ acts in the $l$--th row and we set $\widetilde{M}=Re_i(M)$, then $\widetilde{M}_{pq}=M_{pq}$ when $(p,q) \neq (l,i),(l,i+1)$ and $\widetilde{M}_{l,i}=M_{l,i}+1, \; \widetilde{M}_{l,i+1}=M_{l,i+1}-1$.
	Note that the $\mgl_n$--weight of $M$ is given by $C\wt(M)=\left(\sum^m_{s=1}M_{1,s},\sum^m_{s=1}M_{2,s},\hdots, \sum^m_{s=1}M_{n,s}\right)$, and so:
	\[C\wt(\widetilde{M})=\left(\sum^m_{s=1}M_{1,s},\sum^m_{s=1}M_{2,s},\hdots, \sum^m_{s=1}M_{l,s}+1-1,\sum^m_{s=1}M_{n,s}\right)=C\wt(M)\]
	
\end{proof}

\begin{lemma}
	\label{lem:recep}
	$Re_i$ preserves the value of $C\ep_j$ for all $i \in \{1,\hdots,m-1\}, \; j \in \{1, \hdots, n-1\}$:
	\[ \forall \; M \in \Lambda^N B_{n,m}, \; Re_i(M) \neq 0 \Rightarrow C\ep_j(Re_i(M)) = C\ep_j(M)\]
\end{lemma}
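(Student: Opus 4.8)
The plan is to compute $C\ep_j$ explicitly for a matrix $M$ and then track how applying $Re_i$ affects the quantities that enter this formula. Recall from the general tensor product rule in Section \ref{sub:gtp} that under the column decomposition (Equation (\ref{eqn:col})), the $\mgl_n$--crystal structure reads $M$ as the tensor product $c_m \otimes \hdots \otimes c_1$ of its columns, each column being an element of a fundamental crystal $\mathcal{B}^{\mgl_n}_{\varpi_{k}}$. For a single column $c$ (a $0/1$ vector of length $n$), the quantities $C\ep_j(c)$ and $C\phi_j(c)$ depend only on entries $c_j$ and $c_{j+1}$: one checks that $C\ep_j(c)=1$ exactly when $c_j=0,\,c_{j+1}=1$ (and is $0$ otherwise), while $C\phi_j(c)=1$ exactly when $c_j=1,\,c_{j+1}=0$. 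Thus from the formula
\[
C\ep_j(M)=\max_k\left\{C\ep_j(c_{m-k+1})-\left\langle C\alpha^\vee_j,\, C\wt(c_m)+\hdots+C\wt(c_{m-k+2})\right\rangle,\,0\right\},
\]
the value of $C\ep_j(M)$ is determined entirely by the pattern of entries in rows $j$ and $j+1$ across all columns.

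First I would reduce to the observation that $Re_i$ acts within a single row $l$, modifying only the entries $M_{l,i}$ and $M_{l,i+1}$ (as recorded in the previous lemma's proof): it turns a $0$ in position $(l,i)$ and a $1$ in position $(l,i+1)$ into a $1$ and a $0$ respectively. The key point is that this move stays inside one row. Consequently, if $l \neq j$ and $l \neq j+1$, the two rows $j, j+1$ that govern $C\ep_j$ are untouched, so $C\ep_j$ is trivially preserved. The substantive case is $l \in \{j, j+1\}$, where $Re_i$ alters one of the two relevant rows. Here I would argue that $Re_i$ merely permutes the contribution between columns $i$ and $i+1$ within the fixed pair of rows: it moves a single $1$ from column $i+1$ to column $i$ in row $l$, which rearranges the $(c_j,c_{j+1})$ patterns of columns $i$ and $i+1$ but preserves the \emph{multiset} of such patterns over all columns only up to a controlled swap, so I must verify the max in the formula for $C\ep_j$ is unchanged.

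The hard part will be this case $l \in \{j,j+1\}$, because $C\ep_j(M)$ is a maximum over partial sums and is \emph{not} simply a function of the multiset of column-patterns — the order of columns matters through the weight terms $\langle C\alpha^\vee_j, \cdots\rangle$. My approach is to exploit that $Re_i$ as a $\mgl_m$--operator is computed on rows $c_m\otimes\cdots\otimes c_1$ (really, using the signature/bracketing rule in the row direction), so that the move it performs in row $l$ corresponds to a well-understood local change. I would translate the question into the bracketing (signature) description of $C\ep_j$: reading down rows $j$ and $j+1$ column by column, one records a symbol $+$ for each column with $C\phi_j=1$ (pattern $1,0$) and $-$ for each column with $C\ep_j=1$ (pattern $0,1$), and $C\ep_j(M)$ equals the number of unbracketed $-$ signs after cancelling adjacent $+-$ pairs. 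The decisive step is then to show that the single entry-swap $Re_i$ performs in columns $i,i+1$ either leaves the $(j,j+1)$--signature string literally unchanged, or changes it only in a way that does not affect the count of unbracketed $-$'s. I expect this to come down to a short finite case analysis on the four entries $M_{j,i},M_{j,i+1},M_{j+1,i},M_{j+1,i+1}$ together with the $\mgl_m$--admissibility condition guaranteeing $Re_i(M)\neq 0$, and that in every admissible case the relevant signature count is invariant.
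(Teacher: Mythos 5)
Your plan is correct and follows essentially the same route as the paper's proof: localize to rows $j,j+1$ (so only $C\ep_{l-1}$ and $C\ep_{l}$ can change) and to columns $i,i+1$, then run a finite case analysis on the four entries there, using the fact that $Re_i$ acts in the topmost admissible row $l$ to rule out the configurations $(M_{l-1,i},M_{l-1,i+1})=(1,0)$ and $(M_{l+1,i},M_{l+1,i+1})=(0,1)$. The paper carries out the final verification with explicit partial-sum formulas for $C\ep^{\widetilde{i}}_{j}$ and $C\ep^{\widetilde{i+1}}_{j}$ rather than your signature-word reformulation, but the two are equivalent and your version does go through in every admissible case (the reduced word is in fact literally unchanged); just be sure to read the columns in the tensor order $c_m\otimes\hdots\otimes c_1$, i.e.\ right to left, when forming the word.
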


\begin{proof}

	Given a matrix $M$ in $\Lambda^N B_{n,m}$, suppose $Re_i$ acts in the $l$--th row:

	$$M = \begin{blockarray}{cccccc}
	\phantom{0} & \phantom{0} & \phantom{0} & \text{\tiny{i}} & \text{\tiny{i+1}} & \phantom{0} \\
	\begin{block}{c[ccccc]}
	\phantom{0} & \phantom{0} & \phantom{0} & \phantom{0} & \phantom{0} & \phantom{0} \\
	\phantom{0} & \phantom{0} & \phantom{0} & \phantom{0} & \phantom{0} & \phantom{0} \\
	\text{\tiny{l}} & \phantom{0} & \phantom{0} & 0 & 1 & \phantom{0} \\
	\phantom{0} & \phantom{0} & \phantom{0} & \phantom{0} & \phantom{0} & \phantom{0} \\
	\phantom{0} & \phantom{0} & \phantom{0} & \phantom{0} & \phantom{0} & \phantom{0} \\
	\end{block}
	\end{blockarray}
	\quad \stackrel{Re_i}{\xrightarrow{\hspace*{1cm}}}
	\begin{blockarray}{cccccc}
	\phantom{0} & \phantom{0} & \phantom{0} & \text{\tiny{i}} & \text{\tiny{i+1}} & \phantom{0} \\
	\begin{block}{c[ccccc]}
	\phantom{0} & \phantom{0} & \phantom{0} & \phantom{0} & \phantom{0} & \phantom{0} \\
	\phantom{0} & \phantom{0} & \phantom{0} & \phantom{0} & \phantom{0} & \phantom{0} \\
	\text{\tiny{l}} & \phantom{0} & \phantom{0} & 1 & 0 & \phantom{0} \\
	\phantom{0} & \phantom{0} & \phantom{0} & \phantom{0} & \phantom{0} & \phantom{0} \\
	\phantom{0} & \phantom{0} & \phantom{0} & \phantom{0} & \phantom{0} & \phantom{0} \\
	\end{block}
	\end{blockarray}$$
	
	Note that from the description of $C\ep_j$ above, we have, for $\widetilde{k}:=m+1-k$ (for simplicity of notation):
	
	\begin{align*}
	C\ep^{\widetilde{k}}_j(M) &=C\ep^{\widetilde{k}}_j\left(\left[\begin{array}{ccc}
	| & & | \\
	c_1 & \hdots & c_m \\
	| & & | 
	\end{array}\right]\right) = C\ep^{\widetilde{k}}_j(c_m \otimes c_{m-1} \otimes \hdots \otimes c_1) \\
	\vspace{0.5cm}
	&= C\ep_j(c_k) - \left\langle \alpha^{\vee}_j, \text{wt}(c_m)+\hdots + \text{wt}(c_{k+1}) \right\rangle  \\
	&= \delta_{M_{j,k},0}\delta_{M_{j+1,k},1} + \DS{\sum^{m}_{s=k+1}}{M_{j+1,s}} - \DS{\sum^{m}_{s=k+1}} {M_{j,s}} 
	\end{align*}
	
	Diagrammatically, this can be represented using the matrix $M$ as:
	
	\begin{figure}[H]
		\begin{center}
			\begin{tikzpicture}
			\matrix[matrix of math nodes,left delimiter={[}, right delimiter={]}](m)
			{
				\phantom{0} &  \phantom{0} & \phantom{0} & \phantom{0}  & \phantom{0} \\
				\phantom{0} & \delta_0 & \phantom{0} & - & \phantom{0} \\
				\phantom{0} & \delta_1 & \phantom{0} & + & \phantom{0} \\
				\phantom{0} & \phantom{0} & \phantom{0} & \phantom{0} & \phantom{0} \\
			};
			\draw[color=red,fill=red!10,fill opacity =0.5] (m-2-5.north east) rectangle (m-3-2.south west); 
			\draw[color=white](m-2-5.north east) -- (m-4-5.south east);
			\draw[color=red] (m-3-5.north east) -- (m-3-1.north east);
			\node[above] at (m-1-2.north) {\tiny{k}};
			\node[left] at ($(m.west)+(-0.45,0.25)$) {\tiny{j}};
			\node[left] at ($(m.west)+(-0.35,-0.4)$) {\tiny{j+1}};
			\draw[color=red] (m-2-3.north west) -- (m-4-3.north west);
			\end{tikzpicture}
			\caption{A short-hand notation for how $C\ep^{\widetilde{k}}_j$ is computed on a given matrix.}
			\label{fig:Cep}
		\end{center}
	\end{figure}
	
	This only involves rows $j$ and $j+1$, and $C\ep_j = \max_{\widetilde{k}} C\ep^{\widetilde{k}}_j$, so $Re_i$ acting in the $l$--th row can only affect $C\ep_{l-1}$ and $C\ep_l$. It does not affect $C\ep^{\widetilde{k}}_{l-1}, C\ep^{\widetilde{k}}_l$ for $k>i+1$ and $k<i$, as illustrated by the pictures below:
			\begin{center}
			\begin{multicols}{2}
				
				\begin{tikzpicture}
				\matrix[matrix of math nodes,left delimiter={[}, right delimiter={]}](m)
				{
					\phantom{0} &  \phantom{0} & \phantom{0} & \phantom{0}  & \phantom{0} & \phantom{0} & \phantom{0} & \phantom{0} \\
					\phantom{0} & \phantom{0} & \phantom{0}  & \phantom{0} & \delta_0 & \phantom{0} & - & \phantom{0}  \\
					\phantom{0} & 0 & 1  & \phantom{0} & \delta_1 & \phantom{0} & + & \phantom{0}  \\
					\phantom{0} & \phantom{0} & \phantom{0} & \phantom{0} & \phantom{0} & \phantom{0} & \phantom{0} & \phantom{0} \\
					\phantom{0} & \phantom{0} & \phantom{0} & \phantom{0} & \phantom{0} & \phantom{0} & \phantom{0} & \phantom{0} \\
				};
				\draw[color=red,fill=red!10,fill opacity =0.5] (m-2-8.north east) rectangle (m-3-5.south west); 
				\draw[color=white](m-2-8.north east) -- (m-4-8.south east);
				\draw[color=red] (m-3-8.north east) -- (m-3-4.north east);
				\node[above] at (m-1-5.north) {\tiny{k}};
				\node[above] at (m-1-2.north) {\tiny{i}};
				\node[above] at (m-1-3.north) {\tiny{i+1}};
				\node[left] at ($(m.west)+(-0.35,0.5)$) {\tiny{l-1}};
				\node[left] at ($(m.west)+(-0.35,-0.1)$) {\tiny{l}};
				\draw[color=red] (m-2-6.north west) -- (m-4-6.north west);
				\end{tikzpicture}
				
				\begin{tikzpicture}
				\matrix[matrix of math nodes,left delimiter={[}, right delimiter={]}](m)
				{
					\phantom{0} &  \phantom{0} & \phantom{0} & \phantom{0}  & \phantom{0} & \phantom{0} & \phantom{0} & \phantom{0} \\
					\phantom{0} & \phantom{0} & \phantom{0}  & \phantom{0} & \phantom{0} & \phantom{0} & \phantom{0} & \phantom{0}  \\
					\phantom{0} & 0 & 1  & \phantom{0} & \delta_0 & \phantom{0} & - & \phantom{0}  \\
					\phantom{0} & \phantom{0} & \phantom{0} & \phantom{0} & \delta_1 & \phantom{0} & + & \phantom{0} \\
					\phantom{0} & \phantom{0} & \phantom{0} & \phantom{0} & \phantom{0} & \phantom{0} & \phantom{0} & \phantom{0} \\
				};
				\draw[color=red,fill=red!10,fill opacity =0.5] (m-3-8.north east) rectangle (m-4-5.south west); 
				\draw[color=white](m-3-8.north east) -- (m-5-8.south east);
				\draw[color=red] (m-4-8.north east) -- (m-4-4.north east);
				\node[above] at (m-1-5.north) {\tiny{k}};
				\node[above] at (m-1-2.north) {\tiny{i}};
				\node[above] at (m-1-3.north) {\tiny{i+1}};
				\node[left] at ($(m.west)+(-0.35,0.05)$) {\tiny{l}};
				\node[left] at ($(m.west)+(-0.35,-0.65)$) {\tiny{l+1}};
				\draw[color=red] (m-3-6.north west) -- (m-5-6.north west);
				\end{tikzpicture}
			\end{multicols}
			\end{center}

			\begin{center}
			\begin{multicols}{2}
				
				\begin{tikzpicture}
				\matrix[matrix of math nodes,left delimiter={[}, right delimiter={]}](m)
				{
					\phantom{0} &  \phantom{0} & \phantom{0} & \phantom{0}  & \phantom{0} & \phantom{0} & \phantom{0} & \phantom{0} \\
					\phantom{0} & \delta_0 & \phantom{0}  & \phantom{0} & \phantom{0} & \phantom{0} & - & \phantom{0}  \\
					\phantom{0} & \delta_1 & \phantom{0}  & 0 & 1 & \phantom{0} & + & \phantom{0}  \\
					\phantom{0} & \phantom{0} & \phantom{0} & \phantom{0} & \phantom{0} & \phantom{0} & \phantom{0} & \phantom{0} \\
					\phantom{0} & \phantom{0} & \phantom{0} & \phantom{0} & \phantom{0} & \phantom{0} & \phantom{0} & \phantom{0} \\
				};
				\draw[color=red,fill=red!10,fill opacity =0.5] (m-2-8.north east) rectangle (m-3-2.south west); 
				\draw[color=white](m-2-8.north east) -- (m-4-8.south east);
				\draw[color=red] (m-3-8.north east) -- (m-3-1.north east);
				\node[above] at (m-1-2.north) {\tiny{k}};
				\node[above] at (m-1-4.north) {\tiny{i}};
				\node[above] at (m-1-5.north) {\tiny{i+1}};
				\node[left] at ($(m.west)+(-0.35,0.5)$) {\tiny{l-1}};
				\node[left] at ($(m.west)+(-0.35,-0.1)$) {\tiny{l}};
				\draw[color=red] (m-2-3.north west) -- (m-4-3.north west);
				\end{tikzpicture}

				\begin{tikzpicture}
				\matrix[matrix of math nodes,left delimiter={[}, right delimiter={]}](m)
				{
					\phantom{0} &  \phantom{0} & \phantom{0} & \phantom{0}  & \phantom{0} & \phantom{0} & \phantom{0} & \phantom{0} \\
					\phantom{0} & \phantom{0} & \phantom{0}  & \phantom{0} & \phantom{0} & \phantom{0} & \phantom{0} & \phantom{0}  \\
					\phantom{0} & \delta_0 & \phantom{0}  & 0 & 1 & \phantom{0} & - & \phantom{0}  \\
					\phantom{0} & \delta_1 & \phantom{0} & \phantom{0} & \phantom{0} & \phantom{0} & + & \phantom{0} \\
					\phantom{0} & \phantom{0} & \phantom{0} & \phantom{0} & \phantom{0} & \phantom{0} & \phantom{0} & \phantom{0} \\
				};
				\draw[color=red,fill=red!10,fill opacity =0.5] (m-3-8.north east) rectangle (m-4-2.south west); 
				\draw[color=white](m-3-8.north east) -- (m-5-8.south east);
				\draw[color=red] (m-4-8.north east) -- (m-4-1.north east);
				\node[above] at (m-1-2.north) {\tiny{k}};
				\node[above] at (m-1-4.north) {\tiny{i}};
				\node[above] at (m-1-5.north) {\tiny{i+1}};
				\node[left] at ($(m.west)+(-0.35,0.05)$) {\tiny{l}};
				\node[left] at ($(m.west)+(-0.35,-0.65)$) {\tiny{l+1}};
				\draw[color=red] (m-3-3.north west) -- (m-5-3.north west);
				\end{tikzpicture}
			\end{multicols}
		\end{center}

	So, only the values of $C\ep^{\widetilde{i}}_{l-1}, C\ep^{\widetilde{i+1}}_{l-1}, C\ep^{\widetilde{i}}_l, C\ep^{\widetilde{i+1}}_l$ can be affected by the action of $Re_i$ (the first pair in the case when $l>1$ and the second in the case $l<n$).

	First, let us look at the pair $C\ep^{\widetilde{i}}_{l-1}$, $C\ep^{\widetilde{i+1}}_{l-1}$. For simplicity, denote:
	\[A :=\sum^{m}_{s=i+2} M_{l,s} \quad \quad x:=M_{l-1,i} \quad \quad B :=\sum^{m}_{s=i+2} M_{l-1,s} \quad \quad y:=M_{l-1,i+1}\]
	Then we have, starting with the matrix $M$ on the left:
	
	$$\begin{tikzpicture}
	\matrix[matrix of math nodes,left delimiter={[}, right delimiter={]}](m)
	{
		\phantom{0} &  \phantom{0} & \phantom{0} & \phantom{0} & \phantom{0}  & \phantom{0} \\
		\phantom{0} & x & y & \phantom{0} & B & \phantom{0} \\
		\phantom{0} & 0 & 1 &\phantom{0}  & A & \phantom{0} \\
		\phantom{0} & \phantom{0} & \phantom{0} & \phantom{0} & \phantom{0} & \phantom{0} \\
	};
	\draw[color=red,fill=red!10,fill opacity =0.5] (m-2-6.north east) rectangle (m-3-2.south west); 
	\draw[color=white](m-2-6.north east) -- (m-3-6.south east);
	\draw[color=red] (m-3-2.north west) -- (m-3-6.north east);
	\draw[color=red] (m-1-2.south east) -- (m-3-2.south east);
	\node[above] at (m-1-2.north) {\tiny{i}};
	\node[above] at (m-1-3.north) {\tiny{i+1}};
	\node[left] at ($(m.west)+(-0.35,0.25)$) {\tiny{l-1}};
	\node[left] at ($(m.west)+(-0.35,-0.4)$) {\tiny{l}};
	\node[below] at ($(m.south)+(0,-0.5)$) {$C\ep^{\widetilde{i}}_{l-1}(M)=A-B-y+1$};
	\node[right] at ($(m.east)+(0.5,0)$) {$\stackrel{Re_i}{\xrightarrow{\hspace*{2cm}}}$};
	
	\begin{scope}[xshift=7cm]
	
	\matrix[matrix of math nodes,left delimiter={[}, right delimiter={]}](n)
	{
		\phantom{0} &  \phantom{0} & \phantom{0} & \phantom{0} & \phantom{0}  & \phantom{0} \\
		\phantom{0} & x & y & \phantom{0} & B & \phantom{0} \\
		\phantom{0} & 1 & 0 &\phantom{0}  & A & \phantom{0} \\
		\phantom{0} & \phantom{0} & \phantom{0} & \phantom{0} & \phantom{0} & \phantom{0} \\
	};
	\draw[color=red,fill=red!10,fill opacity =0.5] (n-2-6.north east) rectangle (n-3-2.south west); 
	\draw[color=white](n-2-6.north east) -- (n-3-6.south east);
	\draw[color=red] (n-3-2.north west) -- (n-3-6.north east);
	\draw[color=red] (n-1-2.south east) -- (n-3-2.south east);
	\node[above] at (n-1-2.north) {\tiny{i}};
	\node[above] at (n-1-3.north) {\tiny{i+1}};
	\node[left] at ($(n.west)+(-0.35,0.25)$) {\tiny{l-1}};
	\node[left] at ($(n.west)+(-0.35,-0.4)$) {\tiny{l}};
	\node[below] at ($(n.south)+(0,-0.5)$) {$C\ep^{\widetilde{i}}_{l-1}(Re_i(M))=A-B-y+\delta_{x,0}$};
	\end{scope}
	\end{tikzpicture}$$
	
	$$\hspace{-0.7cm}\begin{tikzpicture}
	\matrix[matrix of math nodes,left delimiter={[}, right delimiter={]}](m)
	{
		\phantom{0} &  \phantom{0} & \phantom{0} & \phantom{0} & \phantom{0}  & \phantom{0} \\
		\phantom{0} & x & y & \phantom{0} & B & \phantom{0} \\
		\phantom{0} & 0 & 1 &\phantom{0}  & A & \phantom{0} \\
		\phantom{0} & \phantom{0} & \phantom{0} & \phantom{0} & \phantom{0} & \phantom{0} \\
	};
	\draw[color=red,fill=red!10,fill opacity =0.5] (m-2-6.north east) rectangle (m-3-3.south west); 
	\draw[color=white](m-2-6.north east) -- (m-3-6.south east);
	\draw[color=red] (m-3-3.north west) -- (m-3-6.north east);
	\draw[color=red] (m-1-3.south east) -- (m-3-3.south east);
	\node[above] at (m-1-2.north) {\tiny{i}};
	\node[above] at (m-1-3.north) {\tiny{i+1}};
	\node[left] at ($(m.west)+(-0.35,0.25)$) {\tiny{l-1}};
	\node[left] at ($(m.west)+(-0.35,-0.4)$) {\tiny{l}};
	\node[below] at ($(m.south)+(0,-0.5)$) {$C\ep^{\widetilde{i+1}}_{l-1}(M)=A-B+\delta_{y,0}$};
	\node[right] at ($(m.east)+(0.5,0)$) {$\stackrel{Re_i}{\xrightarrow{\hspace*{2cm}}}$};
	
	\begin{scope}[xshift=7cm]
	
	\matrix[matrix of math nodes,left delimiter={[}, right delimiter={]}](n)
	{
		\phantom{0} &  \phantom{0} & \phantom{0} & \phantom{0} & \phantom{0}  & \phantom{0} \\
		\phantom{0} & x & y & \phantom{0} & B & \phantom{0} \\
		\phantom{0} & 1 & 0 &\phantom{0}  & A & \phantom{0} \\
		\phantom{0} & \phantom{0} & \phantom{0} & \phantom{0} & \phantom{0} & \phantom{0} \\
	};
	\draw[color=red,fill=red!10,fill opacity =0.5] (n-2-6.north east) rectangle (n-3-3.south west); 
	\draw[color=white](n-2-6.north east) -- (n-3-6.south east);
	\draw[color=red] (n-3-3.north west) -- (n-3-6.north east);
	\draw[color=red] (n-1-3.south east) -- (n-3-3.south east);
	\node[above] at (n-1-2.north) {\tiny{i}};
	\node[above] at (n-1-3.north) {\tiny{i+1}};
	\node[left] at ($(n.west)+(-0.35,0.25)$) {\tiny{l-1}};
	\node[left] at ($(n.west)+(-0.35,-0.4)$) {\tiny{l}};
	\node[below] at ($(n.south)+(0,-0.5)$)  {$C\ep^{\widetilde{i+1}}_{l-1}(Re_i(M))=A-B$};
	\end{scope}
	\end{tikzpicture}$$
	
	We compare next the left and right side values for the cases $y=0$ and $y=1$. Note that if $y=0$, we must also have $x=0$: If $(x,y)=(1,0)$, this would mean $R\ep^{l-2}_i=R\ep^l_i$ for $l>2$, or $R\ep^l_i=0$. That would contradict the assumption that $Re_i$ acts in row $l$, i.e. that $l$ is the smallest number such that:
	\[R\ep_i=R\ep^l_i=\max_k R\ep^k_i>0\]
	As a result,

	$$\begin{tabular}{c|c|c|c|c|}
	\cline{2-5}
	$\phantom{0}$ & \multicolumn{2}{c|}{Case 1: $y=0$} & \multicolumn{2}{|c|}{Case 2: $y=1$} \\
	\cline{2-5}
	$\phantom{0}$ & $M$ & $Re_i(M)$ & $M$ & $Re_i(M)$ \\
	\hline
	\multicolumn{1}{|c|}{$C\ep^{\widetilde{i}}_{l-1}$} & $A-B+1$ & $A-B+1$ & $A-B$ & $A-B-1+\delta_{x,0}$ \\
	\hline
	\multicolumn{1}{|c|}{$C\ep^{\widetilde{i+1}}_{l-1}$} & $A-B+1$ & $A-B$ & $A-B$ & $A-B$ \\
	\hline
	\end{tabular}$$
	
	\vspace{0.5cm}
	
	In either case, we see that $C\ep_{l-1}=\DS{\max_{\widetilde{k}} \{C\ep^{\widetilde{k}}_{l-1},0\}}$ remains unchanged.
	

	Next, we consider the pair $C\ep^{\widetilde{i}}_l$, $C\ep^{\widetilde{i+1}}_l$. In this case, we denote:
	\[A :=\sum^{m}_{s=i+2} M_{l+1,s} \quad \quad x:=M_{l+1,i} \quad \quad B :=\sum^{m}_{s=i+2} M_{l,s} \quad \quad y:=M_{l+1,i+1}\]
	As before, starting with the matrix $M$ on the left, we get:
	$$\begin{tikzpicture}
	\matrix[matrix of math nodes,left delimiter={[}, right delimiter={]}](m)
	{
		\phantom{0} &  \phantom{0} & \phantom{0} & \phantom{0} & \phantom{0}  & \phantom{0} \\
		\phantom{0} & 0 & 1 & \phantom{0} & B & \phantom{0} \\
		\phantom{0} & x & y &\phantom{0}  & A & \phantom{0} \\
		\phantom{0} & \phantom{0} & \phantom{0} & \phantom{0} & \phantom{0} & \phantom{0} \\
	};
	\draw[color=red,fill=red!10,fill opacity =0.5] (m-2-6.north east) rectangle (m-3-2.south west); 
	\draw[color=white](m-2-6.north east) -- (m-3-6.south east);
	\draw[color=red] (m-2-2.south west) -- (m-3-6.north east);
	\draw[color=red] (m-1-2.south east) -- (m-3-2.south east);
	\node[above] at (m-1-2.north) {\tiny{i}};
	\node[above] at (m-1-3.north) {\tiny{i+1}};
	\node[left] at ($(m.west)+(-0.35,0.25)$) {\tiny{l}};
	\node[left] at ($(m.west)+(-0.35,-0.4)$) {\tiny{l+1}};
	\node[below] at ($(m.south)+(0,-0.5)$) {$C\ep^{\widetilde{i}}_l(M)=A-B+y-1+x$};
	\node[right] at ($(m.east)+(0.5,0)$) {$\stackrel{Re_i}{\xrightarrow{\hspace*{2cm}}}$};
	
	\begin{scope}[xshift=7cm]
	
	\matrix[matrix of math nodes,left delimiter={[}, right delimiter={]}](n)
	{
		\phantom{0} &  \phantom{0} & \phantom{0} & \phantom{0} & \phantom{0}  & \phantom{0} \\
		\phantom{0} & 1 & 0 & \phantom{0} & B & \phantom{0} \\
		\phantom{0} & x & y &\phantom{0}  & A & \phantom{0} \\
		\phantom{0} & \phantom{0} & \phantom{0} & \phantom{0} & \phantom{0} & \phantom{0} \\
	};
	\draw[color=red,fill=red!10,fill opacity =0.5] (n-2-6.north east) rectangle (n-3-2.south west); 
	\draw[color=white](n-2-6.north east) -- (n-3-6.south east);
	\draw[color=red] (n-2-2.south west) -- (n-3-6.north east);
	\draw[color=red] (n-1-2.south east) -- (n-3-2.south east);
	\node[above] at (n-1-2.north) {\tiny{i}};
	\node[above] at (n-1-3.north) {\tiny{i+1}};
	\node[left] at ($(n.west)+(-0.35,0.25)$) {\tiny{l}};
	\node[left] at ($(n.west)+(-0.35,-0.4)$) {\tiny{l+1}};
	\node[below] at ($(n.south)+(0,-0.5)$) {$C\ep^{\widetilde{i}}_l(Re_i(M))=A-B+y$};
	\end{scope}
	\end{tikzpicture}$$
	
	$$\hspace{0.15cm}\begin{tikzpicture}
	\matrix[matrix of math nodes,left delimiter={[}, right delimiter={]}](m)
	{
		\phantom{0} &  \phantom{0} & \phantom{0} & \phantom{0} & \phantom{0}  & \phantom{0} \\
		\phantom{0} & 0 & 1 & \phantom{0} & B & \phantom{0} \\
		\phantom{0} & x & y &\phantom{0}  & A & \phantom{0} \\
		\phantom{0} & \phantom{0} & \phantom{0} & \phantom{0} & \phantom{0} & \phantom{0} \\
	};
	\draw[color=red,fill=red!10,fill opacity =0.5] (m-2-6.north east) rectangle (m-3-3.south west); 
	\draw[color=white](m-2-6.north east) -- (m-3-6.south east);
	\draw[color=red] (m-3-3.north west) -- (m-3-6.north east);
	\draw[color=red] (m-1-3.south east) -- (m-3-3.south east);
	\node[above] at (m-1-2.north) {\tiny{i}};
	\node[above] at (m-1-3.north) {\tiny{i+1}};
	\node[left] at ($(m.west)+(-0.35,0.25)$) {\tiny{l}};
	\node[left] at ($(m.west)+(-0.35,-0.4)$) {\tiny{l+1}};
	\node[below] at ($(m.south)+(0,-0.5)$) {$C\ep^{\widetilde{i+1}}_l(M)=A-B$};
	\node[right] at ($(m.east)+(0.5,0)$) {$\stackrel{Re_i}{\xrightarrow{\hspace*{2cm}}}$};
	
	\begin{scope}[xshift=7cm]
	
	\matrix[matrix of math nodes,left delimiter={[}, right delimiter={]}](n)
	{
		\phantom{0} &  \phantom{0} & \phantom{0} & \phantom{0} & \phantom{0}  & \phantom{0} \\
		\phantom{0} & 1 & 0 & \phantom{0} & B & \phantom{0} \\
		\phantom{0} & x & y &\phantom{0}  & A & \phantom{0} \\
		\phantom{0} & \phantom{0} & \phantom{0} & \phantom{0} & \phantom{0} & \phantom{0} \\
	};
	\draw[color=red,fill=red!10,fill opacity =0.5] (n-2-6.north east) rectangle (n-3-3.south west); 
	\draw[color=white](n-2-6.north east) -- (n-3-6.south east);
	\draw[color=red] (n-3-3.north west) -- (n-3-6.north east);
	\draw[color=red] (n-1-3.south east) -- (n-3-3.south east);
	\node[above] at (n-1-2.north) {\tiny{i}};
	\node[above] at (n-1-3.north) {\tiny{i+1}};
	\node[left] at ($(n.west)+(-0.35,0.25)$) {\tiny{l}};
	\node[left] at ($(n.west)+(-0.35,-0.4)$) {\tiny{l+1}};
	\node[below] at ($(n.south)+(0,-0.5)$)  {$C\ep^{\widetilde{i+1}}_l(Re_i(M))=A-B+y$};
	\end{scope}
	\end{tikzpicture}$$
	
	Next, we compare the left and right side values in the two cases $y=0$ and $y=1$. Note that if $y=1$, we must also have $x=1$. Indeed, if $(x,y)=(0,1)$, this would mean $R\ep^{l+1}_i>R\ep^l_i$, which would contradict the assumption that $Re_i$ acts in row $l$, i.e. that $l$ is the smallest number such that:
	\[R\ep_i=R\ep^l_i=\max_k R\ep^k_i>0\]
	As a result,

	$$\begin{tabular}{c|c|c|c|c|}
	\cline{2-5}
	$\phantom{0}$ & \multicolumn{2}{c|}{Case 1: $y=0$} & \multicolumn{2}{|c|}{Case 2: $y=1$} \\
	\cline{2-5}
	$\phantom{0}$ & $M$ & $Re_i(M)$ & $M$ & $Re_i(M)$ \\
	\hline
	\multicolumn{1}{|c|}{$C\ep^{\widetilde{i}}_l$} & $A-B-1+x$ & $A-B$ & $A-B+1$ & $A-B+1$ \\
	\hline
	\multicolumn{1}{|c|}{$C\ep^{\widetilde{i+1}}_l$} & $A-B$ & $A-B$ & $A-B$ & $A-B+1$ \\
	\hline
	\end{tabular}$$
	
	\vspace{0.5cm}
	
	In either case, we see that $C\ep_l=\DS{\max_{\widetilde{k}} \{C\ep^{\widetilde{k}}_l,0\}}$ remains unchanged.
\end{proof}

\begin{lemma}
	\label{lem:cerep}
	$Ce_i$ preserves the value of $R\ep_j$  for all $i \in \{1,\hdots,n-1\}$ and $j \in \{1, \hdots, m-1\}$:
	\[ \forall \; M \in \Lambda^N B_{n,m}, \; Ce_i(M) \neq 0 \;  \Rightarrow \; R\ep_j(Ce_i(M)) = R\ep_j(M)\]
\end{lemma}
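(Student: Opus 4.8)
The plan is to mirror the proof of Lemma \ref{lem:recep}, interchanging the roles of rows and columns, but with careful attention to the fact that the row decomposition (\ref{eqn:row}) uses the \emph{forward} order $r_1 \otimes \hdots \otimes r_n$ whereas the column decomposition (\ref{eqn:col}) used the \emph{reversed} order. As I explain below, this difference means the statement is \emph{not} a literal transpose of Lemma \ref{lem:recep}, so the verification has to be redone.

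First I would record the explicit formula for the partial $\mgl_m$--statistics. Applying the tensor product rule of Section \ref{sub:gtp} to $r_1 \otimes \hdots \otimes r_n$ gives
\[R\ep^k_j(M) = \delta_{M_{k,j},0}\,\delta_{M_{k,j+1},1} + \sum_{s=1}^{k-1}\left(M_{s,j+1}-M_{s,j}\right), \qquad R\ep_j = \max_k\{R\ep^k_j,0\},\]
so that $R\ep^k_j$ depends only on columns $j$ and $j+1$ and on the partial column sums strictly \emph{above} row $k$. Since $Ce_i$ acts within a single column $l$, changing $M_{i,l}\colon 0 \to 1$ and $M_{i+1,l}\colon 1 \to 0$, it can only alter those $R\ep_j$ with $j \in \{l-1,l\}$; within each of these only the two boundary terms $R\ep^i_j$ and $R\ep^{i+1}_j$ can change, since the $\delta$--terms move only at $k\in\{i,i+1\}$ and the net contribution of the two altered entries to the partial sum cancels once both lie strictly above row $k$.

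Next I would compute these four boundary terms before and after applying $Ce_i$. For $R\ep_l$ one finds $R\ep^i_l(M)=R\ep^{i+1}_l(M)$, while $Ce_i$ sends $R\ep^i_l \mapsto R\ep^i_l - M_{i,l+1}$ and $R\ep^{i+1}_l \mapsto R\ep^{i+1}_l + M_{i+1,l+1} - 1$; an analogous pair of identities holds for $R\ep_{l-1}$ in terms of the entries $M_{i,l-1}, M_{i+1,l-1}$. To conclude that $\max_k\{R\ep^k_j,0\}$ is unchanged I would split into the four cases for the relevant neighbouring column and check the maximum of the boundary pair directly; exactly one degenerate case in each must be ruled out, namely $(M_{i,l+1},M_{i+1,l+1})=(1,0)$ for $R\ep_l$ and $(M_{i,l-1},M_{i+1,l-1})=(0,1)$ for $R\ep_{l-1}$.

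The crux, and the main obstacle, is ruling out these two cases, which is where the ordering convention enters. The column $l$ in which $Ce_i$ acts is, by the tensor rule, the \emph{smallest tensor position} realizing the maximal $C\ep^{\cdot}_i$; because columns are indexed in reverse, this forces the relevant $C\ep^{\cdot}_i$ at column $l$ to strictly exceed its value at column $l+1$ and to be at least its value at column $l-1$. Expanding these two inequalities with the formula for $C\ep^{\cdot}_i$ from Lemma \ref{lem:recep} yields precisely $(M_{i,l+1},M_{i+1,l+1}) \neq (1,0)$ and $(M_{i,l-1},M_{i+1,l-1}) \neq (0,1)$. These excluded configurations are the \emph{opposite} of the ones excluded in Lemma \ref{lem:recep}, which is the fingerprint of the reversed column ordering and the reason the argument cannot simply be quoted by symmetry. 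Once these cases are removed, the case analysis of the previous paragraph shows the maximum of each affected pair is preserved, and hence $R\ep_j(Ce_i(M)) = R\ep_j(M)$.
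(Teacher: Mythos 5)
Your proposal is correct and follows essentially the same route as the paper's proof: the same explicit formula for $R\ep^k_j$, the same localization to the four boundary terms $R\ep^i_{l-1},R\ep^{i+1}_{l-1},R\ep^i_l,R\ep^{i+1}_l$, and the same exclusion of the two degenerate configurations $(M_{i,l-1},M_{i+1,l-1})=(0,1)$ and $(M_{i,l+1},M_{i+1,l+1})=(1,0)$ via the choice of the acting column $l$ as the one \emph{closest to $m$} achieving the maximum. Your observation that the reversed column ordering in Equation (\ref{eqn:col}) flips the excluded cases relative to Lemma \ref{lem:recep} is exactly the point the paper's proof also relies on.
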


\begin{proof}

	Given a matrix $M$ in $\Lambda^N B_{n,m}$, suppose $Ce_i$ acts in the $l$--th column:

	$$M = \begin{blockarray}{cccccc}
	\phantom{0} & \phantom{0} & \phantom{0} & \phantom{0} & \text{\tiny{l}} & \phantom{0} \\
	\begin{block}{c[ccccc]}
	\phantom{0} & \phantom{0} & \phantom{0} & \phantom{0} & \phantom{0} & \phantom{0} \\
	\phantom{0} & \phantom{0} & \phantom{0} & \phantom{0} & \phantom{0} & \phantom{0} \\
	\text{\tiny{i}} & \phantom{0} & \phantom{0} & \phantom{0} & 0 & \phantom{0} \\
	\text{\tiny{i+1}} & \phantom{0} & \phantom{0} & \phantom{0} & 1 & \phantom{0} \\
	\phantom{0} & \phantom{0} & \phantom{0} & \phantom{0} & \phantom{0} & \phantom{0} \\
	\end{block}
	\end{blockarray}
	\quad \stackrel{Ce_i}{\xrightarrow{\hspace*{1cm}}}
	\begin{blockarray}{cccccc}
	\phantom{0} & \phantom{0} & \phantom{0} & \phantom{0} & \text{\tiny{l}} & \phantom{0} \\
	\begin{block}{c[ccccc]}
	\phantom{0} & \phantom{0} & \phantom{0} & \phantom{0} & \phantom{0} & \phantom{0} \\
	\phantom{0} & \phantom{0} & \phantom{0} & \phantom{0} & \phantom{0} & \phantom{0} \\
	\text{\tiny{i}} & \phantom{0} & \phantom{0} & \phantom{0} & 1 & \phantom{0} \\
	\text{\tiny{i+1}} & \phantom{0} & \phantom{0} & \phantom{0} & 0 & \phantom{0} \\
	\phantom{0} & \phantom{0} & \phantom{0} & \phantom{0} & \phantom{0} & \phantom{0} \\
	\end{block}
	\end{blockarray}$$
	
	From the definition of $R\ep_j$, we have:
	
	\begin{align*}
	R\ep^k_j(M) &=R\ep^k_j\left(\left[\begin{array}{ccc}
	\textrm{\bf{---}} & r_1 & \textrm{\bf{---}} \\
	& \vdots & \\
	\textrm{\bf{---}} & r_n & \textrm{\bf{---}}
	\end{array}\right] \right)=R\ep^k_j(r_1 \otimes r_2 \otimes \hdots \otimes r_n) \\
	&= R\ep_j(r_k) - \left\langle \alpha^{\vee}_j, \text{wt}(r_1)+\hdots + \text{wt}(r_{k-1}) \right\rangle \\
	&= \delta_{M_{k,j},0}\delta_{M_{k,j+1},1} + \DS{\sum^{k-1}_{s=1}}{M_{s,j+1}} - \DS{\sum^{k-1}_{s=1}}{M_{s,j}}
	\end{align*}
	
	Diagrammatically, this can be represented within the matrix $M$ as:
	
	\begin{figure}[H]
		$$\begin{tikzpicture}
		\matrix[matrix of math nodes,left delimiter={[}, right delimiter={]}](m)
		{
			\phantom{0} &  \phantom{0} & \phantom{0} & \phantom{0}  & \phantom{0} \\
			\phantom{0} & \phantom{0} & - & + & \phantom{0} \\
			\phantom{0} &  \phantom{0} & \delta_0 & \delta_1 & \phantom{0} \\
			\phantom{0} & \phantom{0} & \phantom{0} & \phantom{0} & \phantom{0} \\
		};
		\draw[color=red,fill=red!10,fill opacity =0.5] (m-1-4.north east) rectangle (m-3-2.south east); 
		\draw[color=white](m-1-2.north east) -- (m-1-4.north east);
		\draw[color=red] (m-1-3.north east) -- (m-4-3.north east);
		\node[above] at (m-1-3.north) {\tiny{j}};
		\node[above] at (m-1-4.north) {\tiny{j+1}};
		\node[left] at ($(m.west)+(-0.35,-0.25)$) {\tiny{k}};
		\draw[color=red] (m-2-2.south east) -- (m-2-5.south west);
		\end{tikzpicture}$$
		\label{fig:Rep}
		\caption{A short-hand notation for the value of $R\ep^k_j$ on a matrix.}
	\end{figure}
	
	This only involves columns $j$ and $j+1$, and $R\ep_j = \max_k R\ep^k_j$, so $Ce_i$ acting in the $l$--th column can only affect $R\ep_{l-1}$ and $R\ep_l$. It does not affect $R\ep^k_{l-1}, R\ep^k_l$ for $k<i$ and $k>i+1$, as illustrated by the pictures below:
	
		\begin{multicols}{2}
			\hspace{0.5cm}	
			\begin{tikzpicture}
			\hspace{-0.4cm}
			\matrix[matrix of math nodes,left delimiter={[}, right delimiter={]}](m)
			{
				\phantom{0} &  \phantom{0} &  \phantom{0} & \phantom{0} & \phantom{0} \\
				\phantom{0} &  \phantom{0} & \phantom{0} & - & + \\
				\phantom{0} &  \phantom{0} &  \phantom{0} & \delta_0 & \delta_1 \\
				\phantom{0} &  \phantom{0} & \phantom{0} & \phantom{0} & \phantom{0} \\
				\phantom{0} &  \phantom{0} & \phantom{0} & \phantom{0} & 0 \\
				\phantom{0} &  \phantom{0} & \phantom{0} & \phantom{0} & 1 \\
				\phantom{0} &  \phantom{0} & \phantom{0} & \phantom{0} & \phantom{0} \\
			};
			\draw[color=red,fill=red!10,fill opacity =0.5] (m-1-5.north east) rectangle (m-3-3.south east); 
			\draw[color=white](m-1-3.north east) -- (m-1-5.north east);
			\draw[color=red] (m-1-4.north east) -- (m-4-4.north east);
			\node[above] at (m-1-4.north) {\tiny{l-1}};
			\node[above] at (m-1-5.north) {\tiny{l}};
			\node[left] at ($(m.west)+(-0.35,+0.4)$) {\tiny{k}};
			\node[left] at ($(m.west)+(-0.35,-0.55)$) {\tiny{i}};
			\node[left] at ($(m.west)+(-0.35,-1.15)$) {\tiny{i+1}};
			\draw[color=red] (m-2-4.south west) -- (m-2-5.south east);
			\end{tikzpicture}
			
			\begin{tikzpicture}
			\hspace{0.2cm}
			\matrix[matrix of math nodes,left delimiter={[}, right delimiter={]}](m)
			{
				\phantom{0} &  \phantom{0} &  \phantom{0} & \phantom{0} & \phantom{0} \\
				\phantom{0} &  \phantom{0} & \phantom{0} & - & + \\
				\phantom{0} &  \phantom{0} &  \phantom{0} & \delta_0 & \delta_1 \\
				\phantom{0} &  \phantom{0} & \phantom{0} & \phantom{0} & \phantom{0} \\
				\phantom{0} &  \phantom{0} & \phantom{0} & 0 & \phantom{0} \\
				\phantom{0} &  \phantom{0} & \phantom{0} & 1 &  \phantom{0} \\
				\phantom{0} &  \phantom{0} & \phantom{0} & \phantom{0} & \phantom{0} \\
			};
			\draw[color=red,fill=red!10,fill opacity =0.5] (m-1-5.north east) rectangle (m-3-3.south east); 
			\draw[color=white](m-1-3.north east) -- (m-1-5.north east);
			\draw[color=red] (m-1-4.north east) -- (m-4-4.north east);
			\node[above] at (m-1-4.north) {\tiny{l}};
			\node[above] at (m-1-5.north) {\tiny{l+1}};
			\node[left] at ($(m.west)+(-0.35,+0.4)$) {\tiny{k}};
			\node[left] at ($(m.west)+(-0.35,-0.55)$) {\tiny{i}};
			\node[left] at ($(m.west)+(-0.35,-1.15)$) {\tiny{i+1}};
			\draw[color=red] (m-2-4.south west) -- (m-2-5.south east);
			\end{tikzpicture}
			\end{multicols}
			
			\begin{multicols}{2}
			\begin{tikzpicture}
			\hspace{0.2cm}
			\matrix[matrix of math nodes,left delimiter={[}, right delimiter={]}](m)
			{
				\phantom{0} &  \phantom{0} &  \phantom{0} & \phantom{0} & \phantom{0} \\
				\phantom{0} &  \phantom{0} & \phantom{0} & \phantom{0} & 0 \\
				\phantom{0} &  \phantom{0} & \phantom{0} & \phantom{0} & 1 \\
				\phantom{0} &  \phantom{0} & \phantom{0} & \phantom{0} & \phantom{0} \\
				\phantom{0} &  \phantom{0} & \phantom{0} & - & + \\
				\phantom{0} &  \phantom{0} &  \phantom{0} & \delta_0 & \delta_1 \\
				\phantom{0} &  \phantom{0} & \phantom{0} & \phantom{0} & \phantom{0} \\
			};
			\draw[color=red,fill=red!10,fill opacity =0.5] (m-1-5.north east) rectangle (m-6-3.south east); 
			\draw[color=white](m-1-3.north east) -- (m-1-5.north east);
			\draw[color=red] (m-1-4.north east) -- (m-7-4.north east);
			\node[above] at (m-1-4.north) {\tiny{l-1}};
			\node[above] at (m-1-5.north) {\tiny{l}};
			\node[left] at ($(m.west)+(-0.35,+1.1)$) {\tiny{i}};
			\node[left] at ($(m.west)+(-0.35,+0.55)$) {\tiny{i+1}};
			\node[left] at ($(m.west)+(-0.35,-1.05)$) {\tiny{k}};
			\draw[color=red] (m-5-4.south west) -- (m-5-5.south east);
			\end{tikzpicture}
			\begin{tikzpicture}
			\hspace{2.45cm}
			\matrix[matrix of math nodes,left delimiter={[}, right delimiter={]}](m)
			{
				\phantom{0} &  \phantom{0} &  \phantom{0} & \phantom{0} & \phantom{0} \\
				\phantom{0} &  \phantom{0} & \phantom{0} & 0 & \phantom{0} \\
				\phantom{0} &  \phantom{0} & \phantom{0} & 1 &  \phantom{0} \\
				\phantom{0} &  \phantom{0} & \phantom{0} & \phantom{0} & \phantom{0} \\
				\phantom{0} &  \phantom{0} & \phantom{0} & - & + \\
				\phantom{0} &  \phantom{0} &  \phantom{0} & \delta_0 & \delta_1 \\
				\phantom{0} &  \phantom{0} & \phantom{0} & \phantom{0} & \phantom{0} \\
			};
			\draw[color=red,fill=red!10,fill opacity =0.5] (m-1-5.north east) rectangle (m-6-3.south east); 
			\draw[color=white](m-1-3.north east) -- (m-1-5.north east);
			\draw[color=red] (m-1-4.north east) -- (m-7-4.north east);
			\node[above] at (m-1-4.north) {\tiny{l}};
			\node[above] at (m-1-5.north) {\tiny{l+1}};
			\node[left] at ($(m.west)+(-0.35,+1.1)$) {\tiny{i}};
			\node[left] at ($(m.west)+(-0.35,+0.55)$) {\tiny{i+1}};
			\node[left] at ($(m.west)+(-0.35,-1.05)$) {\tiny{k}};
			\draw[color=red] (m-5-4.south west) -- (m-5-5.south east);
			\end{tikzpicture}
		\end{multicols}

	So, only the values of $R\ep^i_{l-1},\; R\ep^{i+1}_{l-1},\; R\ep^{i}_l,\; R\ep^{i+1}_l$ can be affected by the action of $Ce_i$, (the first pair in the case when $l>1$, and the second when $l<m$).

	First, let us look at the pair $R\ep^i_{l-1}$, $R\ep^{i+1}_{l-1}$. For simplicity, denote:
	\[A :=\sum^{i-1}_{s=1} M_{s,l} \quad \quad x:=M_{i,l-1} \quad \quad B :=\sum^{i-1}_{s=1} M_{s,l-1} \quad \quad y:=M_{i+1,l-1}\]
	Starting with the matrix $M$ on the left, we have:
	$$\begin{tikzpicture}
	\matrix[matrix of math nodes,left delimiter={[}, right delimiter={]}](m)
	{
		\phantom{0} &  \phantom{0} & B & A  & \phantom{0} \\
		\phantom{0} & \phantom{0} & x & 0 & \phantom{0} \\
		\phantom{0} &  \phantom{0} & y & 1 & \phantom{0} \\
		\phantom{0} & \phantom{0} & \phantom{0} & \phantom{0} & \phantom{0} \\
	};
	\draw[color=red,fill=red!10,fill opacity =0.5] (m-1-4.north east) rectangle (m-2-3.south west); 
	\draw[color=white](m-1-3.north west) -- (m-1-4.north east);
	\draw[color=red] (m-1-4.north west) -- (m-3-4.north west);
	\node[above] at (m-1-3.north) {\tiny{l-1}};
	\node[above] at (m-1-4.north) {\tiny{l}};\\
	\node[left] at ($(m.west)+(-0.35,+0.25)$) {\tiny{i}};
	\node[left] at ($(m.west)+(-0.2,-0.3)$) {\tiny{i+1}};
	\draw[color=red] (m-1-3.south west) -- (m-1-5.south west);
	\node[below] at ($(m.south)+(0,-0.5)$) {$R\ep^i_{l-1}(M)=A-B$};
	\node[right] at ($(m.east)+(0.5,0)$) {$\stackrel{Ce_i}{\xrightarrow{\hspace*{2cm}}}$};
	
	\begin{scope}[xshift=6.7cm]
	
	\matrix[matrix of math nodes,left delimiter={[}, right delimiter={]}](m)
	{
		\phantom{0} &  \phantom{0} & B & A  & \phantom{0} \\
		\phantom{0} & \phantom{0} & x & 1 & \phantom{0} \\
		\phantom{0} &  \phantom{0} & y & 0 & \phantom{0} \\
		\phantom{0} & \phantom{0} & \phantom{0} & \phantom{0} & \phantom{0} \\
	};
	\draw[color=red,fill=red!10,fill opacity =0.5] (m-1-4.north east) rectangle (m-2-3.south west); 
	\draw[color=white](m-1-3.north west) -- (m-1-4.north east);
	\draw[color=red] (m-1-4.north west) -- (m-3-4.north west);
	\node[above] at (m-1-3.north) {\tiny{l-1}};
	\node[above] at (m-1-4.north) {\tiny{l}};\\
	\node[left] at ($(m.west)+(-0.35,+0.25)$) {\tiny{i}};
	\node[left] at ($(m.west)+(-0.2,-0.3)$) {\tiny{i+1}};
	\draw[color=red] (m-1-3.south west) -- (m-1-5.south west);
	\node[below] at ($(n.south)+(0,-0.5)$) {$R\ep^i_{l-1}(Ce_i(M))=A-B-x+1$};
	\end{scope}
	\end{tikzpicture}$$
	
	$$\begin{tikzpicture}
	\matrix[matrix of math nodes,left delimiter={[}, right delimiter={]}](m)
	{
		\phantom{0} &  \phantom{0} & B & A  & \phantom{0} \\
		\phantom{0} & \phantom{0} & x & 0 & \phantom{0} \\
		\phantom{0} &  \phantom{0} & y & 1 & \phantom{0} \\
		\phantom{0} & \phantom{0} & \phantom{0} & \phantom{0} & \phantom{0} \\
	};
	\draw[color=red,fill=red!10,fill opacity =0.5] (m-1-4.north east) rectangle (m-3-3.south west); 
	\draw[color=white](m-1-3.north west) -- (m-1-4.north east);
	\draw[color=red] (m-1-4.north west) -- (m-4-4.north west);
	\node[above] at (m-1-3.north) {\tiny{l-1}};
	\node[above] at (m-1-4.north) {\tiny{l}};\\
	\node[left] at ($(m.west)+(-0.35,+0.25)$) {\tiny{i}};
	\node[left] at ($(m.west)+(-0.2,-0.3)$) {\tiny{i+1}};
	\draw[color=red] (m-2-3.south west) -- (m-2-5.south west);
	\node[below] at ($(m.south)+(0,-0.5)$) {$R\ep^{i+1}_{l-1}(M)=A-B-x+1-y$};
	\node[right] at ($(m.east)+(0.5,0)$) {$\stackrel{Ce_i}{\xrightarrow{\hspace*{2cm}}}$};
	
	\begin{scope}[xshift=6.7cm]
	
	\matrix[matrix of math nodes,left delimiter={[}, right delimiter={]}](m)
	{
		\phantom{0} &  \phantom{0} & B & A  & \phantom{0} \\
		\phantom{0} & \phantom{0} & x & 1 & \phantom{0} \\
		\phantom{0} &  \phantom{0} & y & 0 & \phantom{0} \\
		\phantom{0} & \phantom{0} & \phantom{0} & \phantom{0} & \phantom{0} \\
	};
	\draw[color=red,fill=red!10,fill opacity =0.5] (m-1-4.north east) rectangle (m-3-3.south west); 
	\draw[color=white](m-1-3.north west) -- (m-1-4.north east);
	\draw[color=red] (m-1-4.north west) -- (m-4-4.north west);
	\node[above] at (m-1-3.north) {\tiny{l-1}};
	\node[above] at (m-1-4.north) {\tiny{l}};
	\node[left] at ($(m.west)+(-0.35,+0.25)$) {\tiny{i}};
	\node[left] at ($(m.west)+(-0.2,-0.3)$) {\tiny{i+1}};
	\draw[color=red] (m-2-3.south west) -- (m-2-5.south west);
	\node[below] at ($(n.south)+(0,-0.5)$) {$R\ep^{i+1}_{l-1}(Ce_i(M))=A-B-x+1$};
	\end{scope}
	\end{tikzpicture}$$
	
	We compare below the left and right side values in the two cases $x=0$ and $x=1$. Note that if $x=0$, we must also have $y=0$. If, on the contrary, $(x,y)=(0,1)$, this would mean $C\ep^{\widetilde{l-1}}_i>C\ep^{\widetilde{l}}_i$ (in this setting $l>1$). This contradicts the assumption that $Ce_i$ acts in column $l$, i.e. that $l$ is closest to $m$ such that:
	\[C\ep_i=C\ep^{\widetilde{l}}_i=\max_{\widetilde{k}} C\ep^{\widetilde{k}}_i>0\]
	As a result,
	
	$$\begin{tabular}{c|c|c|c|c|}
	\cline{2-5}
	$\phantom{0}$ & \multicolumn{2}{c|}{Case 1: $x=0$} & \multicolumn{2}{|c|}{Case 2: $x=1$} \\
	\cline{2-5}
	$\phantom{0}$ & $M$ & $Ce_i(M)$ & $M$ & $Ce_i(M)$ \\
	\hline
	\multicolumn{1}{|c|}{$R\ep^i_{l-1}$} & $A-B$ & $A-B+1$ & $A-B$ & $A-B$ \\
	\hline
	\multicolumn{1}{|c|}{$R\ep^{i+1}_{l-1}$} & $A-B+1$ & $A-B+1$ & $A-B-y$ & $A-B$ \\
	\hline
	\end{tabular}$$
	
	\vspace{0.5cm}
	
	In either case, we see that $R\ep_{l-1}=\max_k \{R\ep^k_{l-1},0\}$ remains unchanged, while the earliest place in the matrix that the maximum is achieved might switch from  $R\ep^{i+1}_{l-1}$ to $R\ep^i_{l-1}$.
	
	Next, we consider $R\ep^i_l$, $R\ep^{i+1}_l$. Analogously to before, denote:
	\[A :=\sum^{i-1}_{s=1} M_{s,l+1} \quad \quad x:=M_{i,l+1} \quad \quad B :=\sum^{i-1}_{s=1} M_{s,l} \quad \quad y:=M_{i+1,l+1}\]
	Starting with the matrix $M$ on the left, we get:
	$$\begin{tikzpicture}
	\matrix[matrix of math nodes,left delimiter={[}, right delimiter={]}](m)
	{
		\phantom{0} &  \phantom{0} & B & A  & \phantom{0} \\
		\phantom{0} & \phantom{0} & 0 & x & \phantom{0} \\
		\phantom{0} &  \phantom{0} & 1 & y & \phantom{0} \\
		\phantom{0} & \phantom{0} & \phantom{0} & \phantom{0} & \phantom{0} \\
	};
	\draw[color=red,fill=red!10,fill opacity =0.5] (m-1-4.north east) rectangle (m-2-3.south west); 
	\draw[color=white](m-1-3.north west) -- (m-1-4.north east);
	\draw[color=red] (m-1-4.north west) -- (m-3-4.north west);
	\node[above] at (m-1-3.north) {\tiny{l}};
	\node[above] at (m-1-4.north) {\tiny{l+1}};\\
	\node[left] at ($(m.west)+(-0.35,+0.25)$) {\tiny{i}};
	\node[left] at ($(m.west)+(-0.2,-0.3)$) {\tiny{i+1}};
	\draw[color=red] (m-1-3.south west) -- (m-1-5.south west);
	\node[below] at ($(m.south)+(0,-0.5)$) {$R\ep^i_l(M)=A-B+x$};
	\node[right] at ($(m.east)+(0.5,0)$) {$\stackrel{Ce_i}{\xrightarrow{\hspace*{2cm}}}$};
	
	\begin{scope}[xshift=6.7cm]
	
	\matrix[matrix of math nodes,left delimiter={[}, right delimiter={]}](m)
	{
		\phantom{0} &  \phantom{0} & B & A  & \phantom{0} \\
		\phantom{0} & \phantom{0} & 1 & x & \phantom{0} \\
		\phantom{0} &  \phantom{0} & 0 & y & \phantom{0} \\
		\phantom{0} & \phantom{0} & \phantom{0} & \phantom{0} & \phantom{0} \\
	};
	\draw[color=red,fill=red!10,fill opacity =0.5] (m-1-4.north east) rectangle (m-2-3.south west); 
	\draw[color=white](m-1-3.north west) -- (m-1-4.north east);
	\draw[color=red] (m-1-4.north west) -- (m-3-4.north west);
	\node[above] at (m-1-3.north) {\tiny{l}};
	\node[above] at (m-1-4.north) {\tiny{l+1}};\\
	\node[left] at ($(m.west)+(-0.35,+0.25)$) {\tiny{i}};
	\node[left] at ($(m.west)+(-0.2,-0.3)$) {\tiny{i+1}};
	\draw[color=red] (m-1-3.south west) -- (m-1-5.south west);
	\node[below] at ($(n.south)+(0,-0.5)$) {$R\ep^i_l(Ce_i(M))=A-B$};
	\end{scope}
	\end{tikzpicture}$$
	
	$$\begin{tikzpicture}
	\matrix[matrix of math nodes,left delimiter={[}, right delimiter={]}](m)
	{
		\phantom{0} &  \phantom{0} & B & A  & \phantom{0} \\
		\phantom{0} & \phantom{0} & 0 & x & \phantom{0} \\
		\phantom{0} &  \phantom{0} & 1 & y & \phantom{0} \\
		\phantom{0} & \phantom{0} & \phantom{0} & \phantom{0} & \phantom{0} \\
	};
	\draw[color=red,fill=red!10,fill opacity =0.5] (m-1-4.north east) rectangle (m-3-3.south west); 
	\draw[color=white](m-1-3.north west) -- (m-1-4.north east);
	\draw[color=red] (m-1-4.north west) -- (m-4-4.north west);
	\node[above] at (m-1-3.north) {\tiny{l}};
	\node[above] at (m-1-4.north) {\tiny{l+1}};\\
	\node[left] at ($(m.west)+(-0.35,+0.25)$) {\tiny{i}};
	\node[left] at ($(m.west)+(-0.2,-0.3)$) {\tiny{i+1}};
	\draw[color=red] (m-2-3.south west) -- (m-2-5.south west);
	\node[below] at ($(m.south)+(0,-0.5)$) {$R\ep^{i+1}_l(M)=A-B+x$};
	\node[right] at ($(m.east)+(0.5,0)$) {$\stackrel{Ce_i}{\xrightarrow{\hspace*{2cm}}}$};
	
	\begin{scope}[xshift=6.7cm]
	
	\matrix[matrix of math nodes,left delimiter={[}, right delimiter={]}](m)
	{
		\phantom{0} &  \phantom{0} & B & A  & \phantom{0} \\
		\phantom{0} & \phantom{0} & 1 & x & \phantom{0} \\
		\phantom{0} &  \phantom{0} & 0 & y & \phantom{0} \\
		\phantom{0} & \phantom{0} & \phantom{0} & \phantom{0} & \phantom{0} \\
	};
	\draw[color=red,fill=red!10,fill opacity =0.5] (m-1-4.north east) rectangle (m-3-3.south west); 
	\draw[color=white](m-1-3.north west) -- (m-1-4.north east);
	\draw[color=red] (m-1-4.north west) -- (m-4-4.north west);
	\node[above] at (m-1-3.north) {\tiny{l}};
	\node[above] at (m-1-4.north) {\tiny{l+1}};
	\node[left] at ($(m.west)+(-0.35,+0.25)$) {\tiny{i}};
	\node[left] at ($(m.west)+(-0.2,-0.3)$) {\tiny{i+1}};
	\draw[color=red] (m-2-3.south west) -- (m-2-5.south west);
	\node[below] at ($(n.south)+(0,-0.5)$) {$R\ep^{i+1}_l(Ce_i(M))=A-B+x-1+y$};
	\end{scope}
	\end{tikzpicture}$$
	
	We compute next the values of the left and right sides in the two cases $x=0$ and $x=1$.	Note that if $x=1$, we must also have $y=1$. If $(x,y)=(1,0)$, this would mean $C\ep^{\widetilde{l}}_i=C\ep^{\widetilde{l+2}}_i$ (if $l+2 \leq m$), or $C\ep^{\widetilde{l}}_i=0$. This contradicts the assumption that $Ce_i$ acts in column $l$, i.e. that $l$ is closest to $m$ such that:
	\[C\ep^{\widetilde{l}}_i=\max_{\widetilde{k}} C\ep^{\widetilde{k}}_i>0\]
	As a result,
	
	$$\begin{tabular}{c|c|c|c|c|}
	\cline{2-5}
	$\phantom{0}$ & \multicolumn{2}{c|}{Case 1: $x=0$} & \multicolumn{2}{|c|}{Case 2: $x=1$} \\
	\cline{2-5}
	$\phantom{0}$ & $M$ & $Ce_i(M)$ & $M$ & $Ce_i(M)$ \\
	\hline
	\multicolumn{1}{|c|}{$R\ep^i_l$} & $A-B$ & $A-B$ & $A-B+1$ & $A-B$ \\
	\hline
	\multicolumn{1}{|c|}{$R\ep^{i+1}_l$} & $A-B$ & $A-B-1+y$ & $A-B+1$ & $A-B+1$ \\
	\hline
	\end{tabular}$$
	
	\vspace{0.5cm}
	
	In either case, we see that $R\ep_{l}=\max_k \{R\ep^k_{l},0\}$ remains unchanged, while the place closest to $m$ where the maximum is achieved might switch from  $R\ep^i_l$ to $R\ep^{i+1}_l$ or conversely.
\end{proof}

\begin{lemma}
	\label{lem:com}
	For all  $i \in \{1,\hdots,m-1\}, \; j \in \{1, \hdots, n-1\}$:
	\[Re_i Ce_j(M) = Ce_j Re_i(M) \; \; \forall \; M \in \Lambda^N B_{n,m} \text{ s.t. } Ce_j(M),\; Re_i(M) \neq 0\]
\end{lemma}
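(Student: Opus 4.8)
The plan is to exploit the locality of the two operators. By the tensor product rule, $Re_i$ modifies only entries in columns $i,i+1$: it selects one row $l_R$ and flips $(M_{l_R,i},M_{l_R,i+1})$ from $(0,1)$ to $(1,0)$. Dually $Ce_j$ modifies only entries in rows $j,j+1$, flipping one chosen column $l_C$ from $(0,1)$ to $(1,0)$. Hence $Ce_j$ can disturb the data read by $Re_i$ only when $l_C\in\{i,i+1\}$, and $Re_i$ can disturb the data read by $Ce_j$ only when $l_R\in\{j,j+1\}$. First I would dispose of the non-interacting situation: suppose $l_R\notin\{j,j+1\}$. Then $Re_i$ leaves rows $j,j+1$ untouched, so $Ce_j$ reads the same entries in $M$ and in $Re_i(M)$; thus $l_C$ and the flip performed by $Ce_j$ are unchanged. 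Conversely, applying $Ce_j$ alters the word $(R\ep^k_i)_k$ only at $k\in\{j,j+1\}$, and by the computation in the proof of Lemma \ref{lem:cerep} the maximum $R\ep_i$ is preserved while only the values at $k\in\{j,j+1\}$ move; since $l_R\notin\{j,j+1\}$, the smallest index attaining the maximum stays at $l_R$, so $l_R(Ce_j(M))=l_R(M)$. Both composites therefore flip the same row in columns $i,i+1$ and the same column in rows $j,j+1$, and since $l_R\notin\{j,j+1\}$ these cells are disjoint, giving $Re_iCe_j(M)=Ce_jRe_i(M)$. The symmetric argument, using Lemma \ref{lem:recep}, settles the case $l_C\notin\{i,i+1\}$.

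It remains to treat the genuinely interacting case $l_R\in\{j,j+1\}$ and $l_C\in\{i,i+1\}$, which I expect to be the main obstacle. Here both operators engage the $2\times2$ block on rows $j,j+1$ and columns $i,i+1$; write it as $\left[\begin{smallmatrix}a&b\\c&d\end{smallmatrix}\right]$. For $Re_i$ to act there we need $(a,b)=(0,1)$ or $(c,d)=(0,1)$, and for $Ce_j$ we need $(a,c)=(0,1)$ or $(b,d)=(0,1)$. Only two of the four combinations are internally consistent, namely $(l_R,l_C)=(j,i)$ and $(l_R,l_C)=(j+1,i+1)$. Moreover I would pin down the remaining free entry using the standard consistency of the signature rule, that the chosen extremiser (the column of $C\ep_j$ closest to $m$, resp. the smallest-index row of $R\ep_i$) must itself carry the pattern $(0,1)$: a short computation of the relevant partial sums then forces the block to be $\left[\begin{smallmatrix}0&1\\1&1\end{smallmatrix}\right]$ in the first combination and $\left[\begin{smallmatrix}0&0\\0&1\end{smallmatrix}\right]$ in the second. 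These two are handled by entirely analogous computations, so I would write up the first and indicate the second.

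The hard part is tracking how each operator migrates the other's action site. The key local computation is that $Re_i$, acting in the block, changes the column word $(C\ep^{\widetilde k}_j)_k$ at the single index $\widetilde{i+1}$ while leaving $C\ep^{\widetilde i}_j$ invariant; combined with the fact that both $C\ep^{\widetilde i}_j$ and $C\ep^{\widetilde{i+1}}_j$ already attain the maximum on $M$ (forced by the block configuration), this makes the closest-to-$m$ maximiser of $C\ep_j$ slide from column $l_C$ to the adjacent block column after $Re_i$ is applied; dually, $Ce_j$ slides the smallest-index maximiser of $R\ep_i$ from row $l_R$ to the adjacent block row. Feeding this back into the definitions, I would verify directly that $Re_iCe_j(M)$ and $Ce_jRe_i(M)$ both send the block $\left[\begin{smallmatrix}0&1\\1&1\end{smallmatrix}\right]$ to $\left[\begin{smallmatrix}1&1\\1&0\end{smallmatrix}\right]$ (and $\left[\begin{smallmatrix}0&0\\0&1\end{smallmatrix}\right]$ to $\left[\begin{smallmatrix}1&0\\0&0\end{smallmatrix}\right]$), with no change outside the block, so the two composites coincide. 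Once the two consistent block configurations are isolated and this migration of the extremum is computed, equality of the two orders is a direct check.
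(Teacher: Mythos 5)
Your proposal is correct and follows essentially the same route as the paper: use the locality of the operators together with Lemmas \ref{lem:recep} and \ref{lem:cerep} to dispose of the non-interacting cases, observe that only the two configurations $(l_R,l_C)=(j,i)$ and $(j+1,i+1)$ are internally consistent (the other two being contradictory), force the remaining free entry of the $2\times2$ block by extremality of the chosen row/column, and verify the commutation directly on the blocks $\bigl[\begin{smallmatrix}0&1\\1&1\end{smallmatrix}\bigr]$ and $\bigl[\begin{smallmatrix}0&0\\0&1\end{smallmatrix}\bigr]$. The paper organizes the case analysis by the relative positions of $j,l$ and $i,k$ rather than by consistency of the block patterns, but the content, the two surviving configurations, and the final local checks are identical.
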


\begin{proof}
	Given an element $M \in \Lambda^N B_{n,m}$, suppose $Re_i$ acts in the $l$--th row. Namely, $l$ is the first row counting from the top such that $R \ep_i=R\ep^l_i>0$ and $M_{l,i}=0,\; M_{l,i+1}=1$. Then the image $Re_i(M)$ has the same entries as $M$ except $Re_i(M)_{l,i}=0,\; Re_i(M)_{l,i+1}=1$:
	
	\[ 
	Re_i(M)_{p,q}=\begin{cases} 
	M_{p,q} & (p,q) \neq (l,i), \; (l,i+1) \\
	M_{l,i}+1 & (p,q)=(l,i) \\
	M_{l,i+1}-1 & (p,q)=(l,i+1) \\ 
	\end{cases}
	\]
	
	Therefore, by Lemma \ref{lem:recep}, $Re_i$ affects only $C\ep^{\widetilde{i}}_{l-1}$, $C\ep^{\widetilde{i+1}}_{l-1}$, $C\ep^{\widetilde{i}}_l$, $C\ep^{\widetilde{i+1}}_l$ and does not change the overall value of  $C\ep_s, \; \forall \; s \in \{1,\hdots,n-1\}$.
	
	Suppose that $Ce_j$ acts in the $k$-th column. Namely $k$ is the closest column to the rightmost one such that $C\ep_j=C\ep^{\widetilde{k}}_j>0$ and $M_{j,k}=0,\; M_{j+1,k}=1$. Then the image $Ce_j(M)$ has the same entries as $M$ except $Ce_j(M)_{j,k}=1,\; Ce_j(M)_{j+1,k}=0$:
	
	\[ 
	Ce_j(M)_{p,q}=\begin{cases} 
	M_{p,q} & (p,q) \neq (j,k), \; (j+1,k) \\
	M_{j,k}+1 & (p,q)=(j,k) \\
	M_{j+1,k}-1 & (p,q)=(j+1,k) \\ 
	\end{cases}
	\]
	
	Note that $Ce_j$ affects only $R\ep^j_{k-1}$, $R\ep^{j+1}_{k-1}$, $R\ep^j_k$, $R\ep^{j+1}_k$ and does not change the overall value of  $R\ep_t, \; \forall \; t \in \{1,\hdots,m-1\}$.

	So, the actions of $Re_i$ and $Ce_j$ on $M$ are determined respectively by $R\ep^l_i$ and $C\ep^{\widetilde{k}}_j$, and interact with each other if:
	\[
	R\ep^l_i \stackrel{(1)}{=} R\ep^{j,j+1}_{k,k-1} \quad \quad \quad \quad C\ep^{\widetilde{k}}_j \stackrel{(2)}{=} C\ep^{\widetilde{i},\widetilde{i+1}}_{l,l-1}
	\]
	Namely, we need to determine if the maxima of $R\ep_i$ and $C\ep_j$ at $R\ep^l_i$ and $C\ep^{\widetilde{k}}_j$ respectively get shifted.
	
	\noindent \textsl{Case 1.} $j<l-1$ or $j>l$. 
	
	In this case, neither $(1)$ nor $(2)$ can be true, so the actions of $Re_i$ and $Ce_j$ are unaffected by one another and hence commute.
	
	\noindent \textsl{Case 2.} $j=l-1$.
	
	2.1. If $i<k-1$ or $i>k$, $(1)$ and $(2)$ still cannot be true, so $Re_i$ and $Ce_j$ do not interact and hence commute.
	
	2.2. If $i=k-1$, then $R\ep^l_i\stackrel{(1)}{=}R\ep^{j+1}_{k-1}, \; C\ep^{\widetilde{k}}_j\stackrel{(2)}{=}C\ep^{\widetilde{i+1}}_{l-1}$ and the action of $Re_i$ and $Ce_j$ happens in the $2 \times 2$ submatrix with rows indexed by $j, \; j+1$ and columns $i,\; i+1$:
	\[M^{i,i+1}_{j,j+1}=
	\left[\begin{array}{cc}
	a & 0 \\
	0 & 1
	\end{array}\right]
	\]
	If $a=1$, then $R\ep^{j+1}_i=R\ep^{j-1}_i$ for $j>1$ or $R\ep^{j+1}_i=0$, which contradicts the assumption that $l=j+1$ is the smallest number for which $R\ep_i=R\ep^{j+1}_i>0$. Therefore, we must have $a=0$. Then the actions commute in the following way, the original submatrix $M^{i,i+1}_{j,j+1}$ being in the upper left corner:
	\[
	\begin{tikzcd}[ampersand replacement=\&]
	\begin{array}{cc} 0 & 0 \\ 0 & 1 \end{array} \arrow{r}{Re_i} \arrow[swap]{d}{Ce_j} \& 
	\begin{array}{cc} 0 & 0 \\ 1 & 0 \end{array} \arrow{d}{Ce_j} \\
	\begin{array}{cc} 0 & 1 \\ 0 & 0 \end{array} \arrow{r}{Re_i} \& 
	\begin{array}{cc} 1 & 0 \\ 0 & 0 \end{array} \\
	\end{tikzcd}
	\vspace{-0.75cm}
	\]
	
	2.3. If $i=k$, on one hand we must have $M_{j+1,i}=0,\; M_{j+1,i+1}=1$ for $Re_i$ to act in the $l=(j+1)$--st row. On the other hand, we must have $M_{j,i}=0,\; M_{j+1,i}=1$ for $Ce_j$ to act in the $i=k$--th column. This leads to a contradiction, so $i=k$ is not possible.

	\noindent \textsl{Case 3.} $j=l$.
	
	3.1. If $i<k-1$ or $i>k$, $(1)$ and $(2)$ still cannot be true, so $Re_i$ and $Ce_j$ do not interact and hence commute.
	
	3.2. If $i=k-1$, then on one hand we must have $M_{j,i}=0,\; M_{j,i+1}=1$ for $Re_i$ to act in the $(l=j)$--th row. On the other hand, we must have $M_{j,i+1}=0,\; M_{j+1,i+1}=1$ for $Ce_j$ to act in the $k=(i+1)$--st column. This leads to a contradiction, so $i=k-1$ is not possible.

	3.3. If $i=k$, $R\ep^l_i\stackrel{(1)}{=}R\ep^j_k, \; C\ep^{\widetilde{k}}_j\stackrel{(2)}{=}C\ep^{\widetilde{i}}_l$ and the action of $Re_i$ and $Ce_j$ happens in the $2 \times 2$ submatrix with rows indexed by $j, \; j+1$ and columns $i,\; i+1$:
	\[M^{i,i+1}_{j,j+1}=
	\left[\begin{array}{cc}
	0 & 1 \\
	1 & b
	\end{array}\right]
	\]
	
	If $b=0$, then $C\ep^{\widetilde{i}}_j=C\ep^{\widetilde{i+2}}_j$ for $i+2\leq m$, or $C\ep^{\widetilde{i}}_j=0$, which contradicts that $Ce_j$ acts in the $k=i$--th column. So, we must have $b=1$. Then the actions commute in the following way, $M^{i,i+1}_{j,j+1}$ being in the upper left corner:
	\[
	\begin{tikzcd}[ampersand replacement=\&]
	\begin{array}{cc} 0 & 1 \\ 1 & 1 \end{array} \arrow{r}{Re_i} \arrow[swap]{d}{Ce_j} \& 
	\begin{array}{cc} 1 & 0 \\ 1 & 1 \end{array} \arrow{d}{Ce_j} \\
	\begin{array}{cc} 1 & 1 \\ 0 & 1 \end{array} \arrow{r}{Re_i} \& 
	\begin{array}{cc} 1 & 1 \\ 1 & 0 \end{array}  \\
	\end{tikzcd}
	\vspace{-0.75cm}
	\]
\end{proof}

\section{Skew Howe Duality}
\label{sec:skew}
\subsection{Skew Howe duality for crystals}
\label{sub:shc}
As we saw in Section \ref{sec:tensor}, the set of matrices:
$$\Lambda^NB_{n,m}=\{M \in \text{Mat}_{n \times m}(\{0,1\}) \; | \; \# 1's = N\}$$ 
is the $\mgl_n \times \mgl_m$--crystal for the representation $\Lambda^N(\C^n \otimes \C^m)$. There is an isomorphism of $\mgl_n \times \mgl_m$--crystals, whose construction is also discussed by van Leeuwen \cite{vL} (Section 3) and Danilov--Koshevoy \cite{DK} (Section 6), also known as \textbf{skew Howe duality} for crystals:
\begin{equation}\Lambda^NB_{n,m} \cong \bigsqcup_{\stackrel{\la \subset n \times m}{|\la|=N}}\mathcal{B}^{\mgl_n}_{\la} \otimes \mathcal{B}^{\mgl_m}_{\la^{\text{tr}}}\label{eq:skewhowe}\end{equation}  
This equivalence is analogous to the equivalence on the level of representations, and the categorification of skew Howe duality is studied by Ehrig and Stroppel in \cite{ES13}. We can express crystal skew Howe duality in Equation (\ref{eq:skewhowe}) either by mapping an element of $\Lambda^NB_{n,m}$ to a pair of $n\times m$ matrices or to a pair of tableaux of transpose shape. We do so by considering below only the first map or the composition of the first and second maps as follows, for $M \in \Lambda^NB_{n,m}$:
\begin{equation}\label{eqn:maps}M \xrightarrow{(Re^{\text{max}},Cf^{\text{max}})} (P,Q) \xrightarrow{(\phi,\psi)} (T_P,T_Q)\end{equation}

The operation $Re^{\text{max}}$ represents a sequence of raising operations $Re_i$ that maps $M$ to a $\mgl_m$--highest weight matrix $P$, and similarly $Cf^{\text{max}}$ represents a sequence of lowering operations $Cf_j$ that maps $M$ to a $\mgl_n$--lowest weight matrix $Q$. The final answer $(P,Q)$ does not depend on the particular sequence of operations we apply, as each connected component of a crystal has a single highest and lowest weight element.

\begin{remark}
	In Section 3 of \cite{vL}, the map $(Re^{\text{max}},Ce^{\text{max}})$ is considered instead.
\end{remark}

\begin{lemma}[See also \cite{vL}, Lemma 3.1.2]\label{lem:part}
	An element $M \in \Lambda^N B_{n,m}$ which is both $\mgl_n$-- and $\mgl_m$--highest weight must be a matrix with all $N$ 1's in the upper left corner filling the shape of a partition $\la_M$. Similarly, if $L\in \Lambda^N B_{n,m}$ is $\mgl_n$--lowest weight and $\mgl_m$--highest weight, then all $N$ 1's in the matrix for $L$ fill the shape of a partition $\la_L$ that is aligned Southwest.
\end{lemma}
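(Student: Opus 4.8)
The plan is to translate the highest-- and lowest--weight conditions into explicit partial--sum inequalities on the entries of $M$, read directly off the tensor product rule of Section~\ref{sub:gtp}, and then argue by minimal counterexample. Write $P^k_j=\sum_{s=1}^k M_{s,j}$ for the prefix sum of column $j$ over its top $k$ rows, $Q^k_i=\sum_{s=k}^m M_{i,s}$ for the suffix sum of row $i$, and $\widehat Q^k_i=\sum_{s=1}^k M_{i,s}$ for its prefix sum. The formula for $R\ep^k_j$ recorded in the proof of Lemma~\ref{lem:cerep} reads $R\ep^k_j(M)=\delta_{M_{k,j},0}\delta_{M_{k,j+1},1}+P^{k-1}_{j+1}-P^{k-1}_j$, and since $R\ep_j=\max_k\{R\ep^k_j,0\}$ a short downward induction on $k$ shows that $M$ is $\mgl_m$--highest weight exactly when $P^k_j\ge P^k_{j+1}$ for all $j,k$. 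The mirror computations for the column--tensor structure give that $M$ is $\mgl_n$--highest weight exactly when $Q^k_i\ge Q^k_{i+1}$ for all $i,k$, and $\mgl_n$--lowest weight exactly when $\widehat Q^k_i\le\widehat Q^k_{i+1}$ for all $i,k$ (only the forward implications are needed below). Specializing to $k=n$, $k=1$, $k=m$ recovers dominance of the weights, hence the row sums $R_i:=\sum_j M_{i,j}$ satisfy $R_1\ge\cdots\ge R_n$ and the column sums $C_j:=\sum_i M_{i,j}$ satisfy $C_1\ge\cdots\ge C_m$; the finer inequalities are what pin down the placement of individual $1$'s.

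For the first statement, suppose $M$ is both highest weight yet some row is not left--justified, and let $i$ be the smallest index of such a row, with $M_{i,j-1}=0$, $M_{i,j}=1$. The $\mgl_m$--highest weight inequality $R\ep^i_{j-1}(M)\le 0$ becomes $1+P^{i-1}_j-P^{i-1}_{j-1}\le 0$, so $P^{i-1}_{j-1}-P^{i-1}_j\ge 1$ and by pigeonhole some earlier row $i_0<i$ has $M_{i_0,j-1}=1$, $M_{i_0,j}=0$. The $\mgl_n$--highest weight inequalities telescope (over $i_0,i_0+1,\dots,i$) to $Q^j_{i_0}\ge Q^j_i\ge M_{i,j}=1$; since $M_{i_0,j}=0$ this $1$ must sit strictly to the right of column $j$, so row $i_0<i$ is not left--justified either, contradicting minimality. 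Thus all rows are left--justified, and together with $R_1\ge\cdots\ge R_n$ the $1$'s fill the Young diagram of $\la_M=(R_1,\dots,R_n)$ in the upper--left corner.

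The second statement requires the dual descent, because the defining conditions propagate in opposite directions; here I would show instead that every column is bottom--justified. Suppose some column has a vertical gap $M_{i,j}=1$, $M_{i+1,j}=0$, and take one with $i$ minimal. The $\mgl_n$--lowest weight condition, evaluated from the term at column $j$, gives $1+\widehat Q^{j-1}_i-\widehat Q^{j-1}_{i+1}\le 0$, so $\widehat Q^{j-1}_{i+1}-\widehat Q^{j-1}_i\ge 1$; this forces $j\ge 2$ and, by pigeonhole, a column $k_0<j$ with $M_{i+1,k_0}=1$, $M_{i,k_0}=0$. The $\mgl_m$--highest weight inequalities telescope to $P^i_{k_0}\ge P^i_j\ge 1$, and as $M_{i,k_0}=0$ this yields a row $i'<i$ with $M_{i',k_0}=1$. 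Reading down column $k_0$ we then see $1$ at row $i'$, $0$ at row $i$, $1$ at row $i+1$, so column $k_0$ contains a vertical gap whose top row is $<i$, contradicting minimality. Hence every column is bottom--justified, and with $C_1\ge\cdots\ge C_m$ the $1$'s fill the partition $\la_L$ aligned to the Southwest.

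The crux is the first paragraph: extracting the partial--sum reformulations from the tensor product rule, i.e.\ the short but fiddly inductions that turn $R\ep_j\equiv 0$ (and its analogues) into prefix/suffix domination of columns and rows. After that the combinatorics is routine, but one must notice that $\mgl_m$--highest weight always propagates a horizontal gap to an \emph{earlier} row while the $\mgl_n$--highest and $\mgl_n$--lowest conditions propagate vertically in \emph{opposite} senses; this is exactly why the two halves need the separate horizontal and vertical descents above rather than a single uniform argument.
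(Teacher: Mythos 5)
Your proof is correct and follows essentially the same strategy as the paper's: assume a forbidden gap exists, choose an extremal one, and play the $\mgl_m$--highest weight condition on the rows against the $\mgl_n$--highest (or lowest) weight condition on the columns to produce a gap that is ``more extremal,'' a contradiction. The only real difference is organizational --- you first package the crystal conditions into global prefix/suffix partial--sum inequalities and then descend on a minimal row or column index, whereas the paper works directly with the formulas for $R\ep^i_j$ and $C\ep^{\widetilde{k}}_j$ at a pair chosen furthest Northeast --- and both routes land on the same combinatorics.
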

\begin{proof}
	It suffices to show that pairs of the type $M_{i,j}=0, \; M_{i,j+1}=1$ and $M_{i,j}=0, \; M_{i+1,j}=1$ are not possible.
	
	Suppose first that a pair of the first type exists in $M$. Choose the one which is furthest Northeast, of the form $M_{i,j}=0, \; M_{i,j+1}=1$. If $i=1$, we can apply $Re_j$, which contradicts that $M$ is $\mgl_m$--highest weight. If we consider $i>1$, then by assumption $R\ep^i_j=1+\sum^{i-1}_{s=1}{M_{s,j+1}}-\sum^{i-1}_{s=1}{M_{s,j}} \leq 0$, so we must have $\sum^{i-1}_{s=1}{M_{s,j+1}} < \sum^{i-1}_{s=1}{M_{s,j}}$. For that to be true, we must have $M_{k,j+1}=0$ for at least one $1 \leq k \leq i-1$. Take the lowest (Southernmost) such entry, then $M_{k+1,j+1}=1$ and since our $(0,1)$ pair was the furthest Northeast, we must have $M_{k,s}=0 \; \forall \; s=j+2,\hdots,m$ (possibly $j+2>m$ and there are no such elements). But this tells us that $C\ep^{\widetilde{j+1}}_k\geq 1$, which contradicts the assumption that $M$ is $\mgl_n$--highest weight.
	
	Similarly, suppose a pair of the second type exists and pick the one which is the furthest Northeast, $M_{i,j}=0, \; M_{i+1,j}=1$. If $j=m$, then we can apply $Ce_i$, which contradicts the assumption that $M$ is $\mgl_n$--highest weight. If we consider $j<m$, then by our assumption, $C\ep^{\widetilde{j}}_i = 1 + \sum^{m}_{s=j+1}{M_{i+1,s}} - \sum^{m}_{s=j+1}{M_{i,s}} \leq 0$, hence $\sum^{m}_{s=j+1}{M_{i+1,s}}<\sum^{m}_{s=j+1}{M_{i,s}}$. This can only occur if $M_{i,k}=1$ for at least one $i+1 \leq k \leq m$. Pick the left-most such. Then $M_{i,k-1}=0$ and since $M_{i,j}=0, \; M_{i+1,j}=1$ was the furthest Northeast such pair, we must have $M_{s,k}=1 \; \; \forall \; 1 \leq s \leq i-1$. Hence, $R\ep^i_{k-1}\geq 1$, which contradicts the assumption that $M$ is $\mgl_m$--highest weight.
	
	The case when a matrix $L$ is $\mgl_n$--lowest weight and $\mgl_m$--highest weight is analogous, and there we can show that pairs of the type $L_{i,j}=0, \; L_{i,j+1}=1$ and $L_{i,j}=1, \; L_{i+1,j}=0$ are not possible.	
\end{proof}

The map $\phi: P \mapsto T_P$ in Equation (\ref{eqn:maps}) maps the $\mgl_m$--highest weight matrix $P$ to a top-left aligned semi-standard Young tableau (SSYT) $T_P$ with $N$ boxes in the following way (see \cite{vL}[Section 1.1]):

$T_P$ is defined by a sequence of partitions $\emptyset =\la^{(0)} \subset \la^{(1)} \subset \la^{(2)} \subset \hdots \subset \la^{(n)}=\la$. Each of the partitions is determined by $(\la^{(i)})^{\text{tr}}=\sum^i_{j=1}{rP_j} \in \mathbb{Z}^m_{\geq 0}$,  where $rP_j$ denotes the $j$--th row of the matrix $P$. The SSYT $T_P$ has shape $\la \subset n \times m$ and filling obtained by placing the number $i$ in each box of the skew-shape $\la^{(i)}/\la^{(i-1)}$, for each $i=1,2,\hdots,n$. 

Indeed, since $P$ is of size $n \times m$, by construction $\la^{\text{tr}} \subset m \times n$. Furthermore, we have that $(\la^{(i)})^{\text{tr}}=((\la^{(i)})^{\text{tr}}_1,(\la^{(i)})^{\text{tr}}_2,\hdots,(\la^{(i)})^{\text{tr}}_m)$ is a partition for each $i=1,\hdots,n$ because $P$ is $\mgl_m$--highest weight, i.e. $R\epsilon_k(P)=0$ for each $k=1,2,\hdots,m-1$, which tells us in particular:
\[0 \geq R\epsilon^i_k(P)=\sum^i_{j=1}{(P_{j,k+1}-P_{j,k})} + \delta_{P_{i,k},1}\delta_{P_{i,k+1},0}\] 
This in turn implies that $(\la^{(i)})^{\text{tr}}_k =\sum^{i}_{j=1}{P_{j,k}} \geq \sum^{i}_{j=1}{P_{j,k+1}}= (\la^{(i)})^{\text{tr}}_{k+1}$. Because the skew-shape $(\la^{(i)})^{\text{tr}}/(\la^{(i-1)})^{\text{tr}}$ has at most one box in each row (as the entries of $P$ are $0$ and $1$), $\la^{(i)}/\la^{(i-1)}$ has at most one box in each column, so each number $i$ appears at most once in each column. Together with $\la^{(i-1)}\subset \la^{(i)}$, we get that $T_P$ is strictly increasing in the columns and weakly increasing in the rows, so it is a SSYT.

The map $\psi: Q \mapsto T_Q$ in Equation (\ref{eqn:maps}) is defined similarly. For a $\mgl_n$--lowest weight matrix $Q=Cf^{\text{max}}(M)$, we get a SSYT $T_Q$ as follows. The shape $\la \subset m\times n$ and its filling are determined analogously to $\phi$, with the difference that $(\la^{(i)})^{\text{tr}}=\sum^i_{j=1}\overline{cQ}_j \in \mathbb{Z}^n_{\geq 0}, \; \forall \; i=1,2,\hdots,m$, where $\overline{cQ}_j$ denotes the $j$--th column of $Q$ traversed from bottom to top ($Q_{i,j}=\overline{Q}_{n+1-i,j}$). Namely, $(\la^{(i)})^{\text{tr}}=((\la^{(i)})^{\text{tr}}_1,(\la^{(i)})^{\text{tr}}_2,\hdots,(\la^{(i)})^{\text{tr}}_n)$ where $(\la^{(i)})^{\text{tr}}_k=\sum^i_{j=1}Q_{n+1-k,j}$. The matrix $Q$ is $\mgl_n$--lowest weight, i.e. $C\phi_{n-k}(Q)=0 \; \forall \; k=1,2,\hdots,n-1$, so in particular (for $\widetilde{i}=m+1-i$): 
\[0 \geq C\phi^{\widetilde{i}}_{n-k}(Q)=\sum^{i}_{j=1}{(Q_{n-k,j}-Q_{n+1-k,j})} + \delta_{Q_{n+1-k,i},1}\delta_{Q_{n-k,i},0} \] 
This in turn implies $(\la^{(i)})^{\text{tr}}_k =\sum^{i}_{j=1}{Q_{n+1-k,j}} \geq \sum^{i}_{j=1}{Q_{n-k,j}}= (\la^{(i)})^{\text{tr}}_{k+1}$. So $(\la^{(i)})^{\text{tr}}$ and hence $\la^{(i)}$ is a partition for all $i$. The remaining arguments that $T_Q$ is a SSYT are identical to those for $T_P$.

Moreover, using the two maps $\phi$ and $\psi$, the tableaux we obtain from the pair of matrices $(P,Q)=(Re^{\text{max}}(M),Cf^{\text{max}}(M))$ are of transpose shape.

\begin{lemma} The partitions $(\la_P,\la_Q)$ associated to the two matrices $(P,Q)$ as above are of transpose shapes: $(\la_P)^{\text{tr}}=\la_Q$.	
\end{lemma}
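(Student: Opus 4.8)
The plan is to reduce both partitions to the data of a single element $L$ that is simultaneously $\mgl_m$--highest weight and $\mgl_n$--lowest weight, and then read off its shape. First I would observe that, since the two crystal structures commute (Proposition \ref{prop:mor}), each operator $Re_i$ is a $\mgl_n$--crystal morphism, so it preserves $C\phi_j$ and hence sends $\mgl_n$--lowest weight elements to $\mgl_n$--lowest weight elements; dually each $Cf_j$ preserves $R\ep_i$ and sends $\mgl_m$--highest weight elements to $\mgl_m$--highest weight elements. Consequently the two extremal operations $Re^{\text{max}}$ and $Cf^{\text{max}}$ commute as maps on $\Lambda^N B_{n,m}$: writing $\sigma$ for a sequence of $Re_i$'s with $\sigma(M)=P$ and $\tau$ for a sequence of $Cf_j$'s with $\tau(M)=Q$, commutativity gives $\sigma(Q)=\tau(P)$; this element is $\mgl_m$--highest weight (being a $Cf$--image of $P$) and $\mgl_n$--lowest weight (being a $Re$--image of $Q$), so it equals both $Re^{\text{max}}(Q)$ and $Cf^{\text{max}}(P)$. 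Denote this common bi--extremal element by $L$.

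Next I would translate each target partition into weight data for $L$. From the definition of $\phi$ with $i=n$, the number $(\la_P)^{\text{tr}}_k=\sum_{j=1}^n P_{j,k}$ is the $k$--th column sum of $P$, i.e.\ the $k$--th entry of $R\text{wt}(P)$; and since $L=Cf^{\text{max}}(P)$ with each $Cf_j$ preserving $R\text{wt}$, the column sums of $L$ and of $P$ coincide, so $(\la_P)^{\text{tr}}$ equals the vector of column sums of $L$. Dually, from the definition of $\psi$ with $i=m$, the number $(\la_Q)^{\text{tr}}_k=\sum_{j=1}^m Q_{n+1-k,j}$ is the sum of row $n+1-k$ of $Q$, i.e.\ the entry $C\text{wt}(Q)_{n+1-k}$; and since $L=Re^{\text{max}}(Q)$ with each $Re_i$ preserving $C\text{wt}$, this equals the sum of row $n+1-k$ of $L$.

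Finally I would invoke Lemma \ref{lem:part}: as $L$ is $\mgl_n$--lowest and $\mgl_m$--highest weight, its $N$ entries equal to $1$ fill a Southwest--aligned partition shape $\nu$, so that the bottom row of $L$ has $\nu_1$ ones, the next row up has $\nu_2$ ones, and so on. A direct count then gives that the $k$--th column sum of $L$ is $\#\{t : \nu_t \geq k\}=\nu^{\text{tr}}_k$, while the sum of row $n+1-k$ of $L$ is $\nu_k$. Combining with the previous paragraph yields $(\la_P)^{\text{tr}}=\nu^{\text{tr}}$ (hence $\la_P=\nu$) and $(\la_Q)^{\text{tr}}=\nu$ (hence $\la_Q=\nu^{\text{tr}}$), and therefore $(\la_P)^{\text{tr}}=\nu^{\text{tr}}=\la_Q$.

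The step I expect to be the main obstacle is the commutation of $Re^{\text{max}}$ and $Cf^{\text{max}}$ together with the resulting well--definedness of $L$: the commutation relations of Proposition \ref{prop:mor} are phrased only for nonzero values, so I would need to confirm that no intermediate application of $\sigma$ to $Q$ or of $\tau$ to $P$ produces $0$ before these sequences terminate. This follows because the preserved quantities $C\phi_j$ and $R\ep_i$ govern exactly when the relevant operators vanish, so a path that stays nonzero on $M$ remains nonzero after the commuting structure is applied. Everything after the construction of $L$ is a routine bookkeeping computation on the Southwest partition shape.
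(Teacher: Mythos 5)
Your proposal is correct and follows essentially the same route as the paper: pass to the common bi-extremal element $L=Cf^{\text{max}}(P)=Re^{\text{max}}(Q)$, use that each type of Kashiwara operator preserves the other type's weight (Proposition \ref{prop:mor}) to identify $(\la_P)^{\text{tr}}$ and $(\la_Q)^{\text{tr}}$ with the column and (reversed) row sums of $L$, and read off the Southwest-aligned shape from Lemma \ref{lem:part}. Your extra care about the commutation of $Re^{\text{max}}$ with $Cf^{\text{max}}$ and the nonvanishing of intermediate steps is a detail the paper leaves implicit, but it is handled correctly.
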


\begin{proof}
	Let $L$ be the element of $\Lambda^NB_{n,m}$ given by: $$L= Cf^{\text{max}}(P)=Cf^{\text{max}}Re^{\text{max}}(M)=Re^{\text{max}}Cf^{\text{max}}(M)=Re^{\text{max}}(Q).$$ Then, by Lemma \ref{lem:part}, the $N$ $1$'s of $L$ fill the shape of a partition $\la_L$ aligned Southwest and so:
	\begin{align*}
	\la_L &=\DS{\left(\sum^m_{k=1}{L_{n,k}},\hdots,\sum^m_{k=1}{L_{2,k}}, \sum^m_{k=1}{L_{1,k}}\right)} \\
	(\la_L)^{\text{tr}} &=\DS{\left(\sum^n_{j=1}{L_{j,1}},\hdots,\sum^n_{j=1}{L_{j,m-1}}, \sum^n_{j=1}{L_{j,m}}\right)}
	\end{align*}
	
	\setlength\columnsep{5pt}
	\begin{multicols}{2}
		\begin{tikzpicture}[baseline]
		\matrix[matrix of math nodes,left delimiter={[}, right delimiter={]}](n)
		{
			0 & 0 & 0 & \hdots & 0 & 0 & 0 & 0 \\
			1 & 1 & 0 & \hdots & 0 & 0 & 0 & 0  \\
			\vdots & \phantom{0} & \phantom{0} & \ddots  & \phantom{0} & \phantom{0} & \phantom{0} & \vdots  \\
			1 & 1 & 1 & \hdots  & 1 & 0 & 0 & 0 \\
			1 & 1 & 1 & \hdots  & 1 & 1 & 1 & 0 \\
		};
		\node[left] at ($(n.west)+(-0.2,0)$) {$L=$};
		\end{tikzpicture}~\begin{tikzpicture}[baseline]
		\matrix[matrix of math nodes,left delimiter={[}, right delimiter={]}](m)
		{
			\phantom{0} &  \phantom{0} & \phantom{0} & \phantom{0}  & \phantom{0} & \phantom{0} & \phantom{0} & \phantom{0} \\
			\phantom{0} & \phantom{0} & \phantom{0}  & \phantom{0} & \phantom{0} & \phantom{0} & \phantom{0} & \phantom{0}  \\
			\phantom{0} & \phantom{0} & \vdots  & \phantom{0} & \phantom{0} & \phantom{0} & \phantom{0} & \phantom{0}  \\
			\phantom{0} & \phantom{0} & \la_{L,2} & \phantom{0} & \phantom{0} & \phantom{0} & \phantom{0} & \phantom{0} \\
			\phantom{0} & \phantom{0} & \la_{L,1} & \phantom{0} & \phantom{0} & \phantom{0} & \phantom{0} & \phantom{0} \\
		};
		\draw[color=red](m-5-1.south west) -- (m-1-1.south west);
		\draw[color=red](m-5-1.south west) -- (m-5-8.south west);
		\draw[color=red] (m-5-8.south west) -- (m-4-8.south west);
		\draw[color=red] (m-4-8.south west) -- (m-4-6.south west);
		\draw[color=red] (m-4-6.south west) -- (m-3-6.south west);
		\draw[color=red] (m-3-6.south west) -- (m-3-5.south west);
		\draw[color=red] (m-3-5.south west) -- (m-2-5.south west);
		\draw[color=red] (m-2-5.south west) -- (m-2-3.south west);
		\draw[color=red] (m-2-3.south west) -- (m-1-3.south west);
		\draw[color=red] (m-1-3.south west) -- (m-1-1.south west);
		\node[left] at ($(m.west)+(-0.2,0)$) {$\la_L=$};
		\end{tikzpicture}
	\end{multicols}\vspace{-0.2cm}
	By definition, we have:
	\begin{align*}
		(\la_P)^{\text{tr}}&=\sum^n_{j=1}rP_j=\left(\sum^n_{j=1}{P_{j,1}},\hdots,\sum^n_{j=1}{P_{j,m}}\right)\\
		&=R\wt(P)\stackrel{(\ast)}{=}R\wt(L)=\left(\sum^n_{j=1}{L_{j,1}},\hdots,\sum^n_{j=1}{L_{j,m}}\right) =(\la_L)^{\text{tr}}\end{align*}
	where $(\ast)$ follows from the fact that $Cf_i$ is a $\mgl_m$--crystal morphism for all $i$. Similarly,
	\begin{align*}
		(\la_Q)^{\text{tr}}&=\sum^m_{k=1}c\overline{Q}_k=\left(\sum^m_{k=1}{Q_{n,k}},\hdots,\sum^m_{k=1}{Q_{1,k}}\right)\\
		&=\overline{C\wt(Q)}\stackrel{(\ast\ast)}{=}\overline{C\wt(L)}=\left(\sum^m_{k=1}{L_{n,k}},\hdots,\sum^m_{k=1}{L_{1,k}}\right) =\la_L\end{align*}
	where $\overline{C\wt(A)}=\overline{(a_1,\hdots,a_n)}=(a_n,\hdots,a_1)$ for a matrix $A$, and $(\ast\ast)$ follows from the fact that $Re_j$ is a $\mgl_n$--crystal morphism for all $j$. Therefore, indeed $(\la_P)^{\text{tr}}=\la_Q$.
\end{proof}

\begin{example}\label{ex:tabl} For the following matrix in  $\Lambda^9 B_{3,5}$, we apply $(Re^{\text{max}},Cf^{\text{max}})$ to get the corresponding pair of matrices, and $(\phi,\psi)$ to get a pair of tableaux of transpose shapes $\la$ and $(\la)^{\text{tr}}$, where $\la=(5,3,1)$.
	\vspace{0.5cm}
	
	\begin{align*} \left[\begin{array}{ccccc}
	1 & 1 & 1 & 0 & 0  \\
	0 & 0 & 1 & 1 & 0  \\
	1 & 1 & 1 & 0 & 1  \\
	\end{array}\right] &\xrightarrow{(Re^{\text{max}},Cf^{\text{max}})} 
	\left[ \begin{array}{ccccc}
	1 & 1 & 1 & 0 & 0  \\
	1 & 0 & 0 & 1 & 0  \\
	1 & 1 & 1 & 0 & 1  \\
	\end{array}\right] \otimes \left[ \begin{array}{ccccc}
	0 & 0 & 1 & 0 & 0  \\
	1 & 1 & 1 & 0 & 0  \\
	1 & 1 & 1 & 1 & 1  \\
	\end{array}\right] \\
	& \\
	&  \xrightarrow{(\phi,\psi)} \begin{minipage}[t]{0.02\textwidth}\vspace{-0.6cm}\ytableausetup{nosmalltableaux}
	\begin{ytableau}
	1 & 1 & 1 & 2 & 3 \\
	2 & 3 & 3 \\
	3
	\end{ytableau}\;\;$\otimes$~\ytableausetup{nosmalltableaux}
	\begin{ytableau}
	1 & 1 & 3 \\
	2 & 2 \\
	3 & 3 \\
	4 \\
	5
	\end{ytableau}\end{minipage}
	\end{align*}
\end{example}

Next, we verify that the map in Equation (\ref{eqn:maps}) is a $\mgl_n\times \mgl_m$--crystal morphism. We already know that $Re^{\text{max}}$ is a $\mgl_n$--crystal morphism and $Cf^{\text{max}}$ is a $\mgl_m$--crystal morphism, so it remains to check that for $\phi$ and $\psi$.

\begin{lemma}\label{lem:ficom}
	The map defined above \[\phi:\{P \in \Lambda^N B_{n,m}\; | \; Re_i(P)=0 \quad \forall \; i=1,2,\hdots,m-1\} \longrightarrow \bigsqcup_{\stackrel{\la \in n\times m}{ |\la|=N}}\mathcal{B}^{\mgl_n}_{\la}\] is a $\mgl_n$--crystal morphism.
\end{lemma}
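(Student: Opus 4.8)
The plan is to verify that $\phi$ satisfies the defining conditions of a crystal morphism, using the explicit description of $\phi$ together with the column decomposition of Equation~(\ref{eqn:col}). First I would extract from the construction the concrete form of $T_P$: since $(\la^{(i)})^{\text{tr}}_k=\sum_{j=1}^i P_{j,k}$ counts the boxes labelled $\le i$ in the $k$-th column of $T_P$, a new box (labelled $i$) is added to column $k$ exactly when $P_{i,k}=1$. Hence the $k$-th column of $T_P$, read top to bottom, is the strictly increasing list $\{j : P_{j,k}=1\}$, i.e. the support of the column $c_k$ of $P$. In particular $\phi$ always returns a genuine tableau and never $0$, so only the weight and the operator conditions need checking. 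Weight preservation is then immediate, as the number of $i$'s in $T_P$ equals $|\la^{(i)}|-|\la^{(i-1)}|=\sum_k P_{i,k}$, which is the $i$-th coordinate of $C\wt(P)$.

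The core of the argument is compatibility with the $\mgl_n$--Kashiwara operators. By Equation~(\ref{eqn:col}) the $\mgl_n$--crystal structure on $\Lambda^N B_{n,m}$ is that of the tensor product $c_m\otimes\cdots\otimes c_1$ of the fundamental column crystals $\mathcal{B}^{\mgl_n}_{\varpi_{|c_k|}}$, and by the description above $\phi(P)=T_P$ is precisely the semistandard tableau whose columns, left to right, are $c_1,\dots,c_m$. Thus $\phi$ is nothing but the standard identification of a tableau with the tensor product of its columns taken in right-to-left order. Two points make this rigorous: the $\mgl_m$--highest weight hypothesis guarantees (as established in the construction preceding the lemma) that $T_P$ is a genuine SSYT of partition shape; and the domain is stable under $Ce_i,Cf_i$, since these preserve every $R\ep_k$ by Lemma~\ref{lem:cerep} together with the partial-inverse argument of Proposition~\ref{prop:mor}, so $\mgl_m$--highest weight matrices map to $\mgl_m$--highest weight matrices. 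I would then invoke the standard realization of the $\mgl_n$--crystal of semistandard tableaux of shape $\la$ (entries in $\{1,\dots,n\}$) as the highest weight crystal $\mathcal{B}^{\mgl_n}_{\la}$ sitting inside the tensor product of its column crystals via the column reading word (see \cite{HoKa},\cite{Jo}). With the tensor convention of Section~\ref{sub:gtp} this reading is exactly the right-to-left order appearing in Equation~(\ref{eqn:col}), so $e_i,f_i$ on $T_P$ agree termwise with $Ce_i,Cf_i$ on $P$; this shows $\phi$ is in fact an isomorphism of $\mgl_n$--crystals onto $\bigsqcup_\la \mathcal{B}^{\mgl_n}_{\la}$, which a fortiori gives all of the morphism conditions, including $\ep_i(T_P)=C\ep_i(P)$ and $\phi_i(T_P)=C\phi_i(P)$.

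I expect the main obstacle to be pinning down the ordering convention so that the signature rule on the reversed column tensor $c_m\otimes\cdots\otimes c_1$ genuinely reproduces the tableau operators. The $\delta$--terms in the formula for $C\ep^{\widetilde{k}}_i$ from Lemma~\ref{lem:recep} are precisely the single-column $\ep_i$--contributions, and one must check that they bracket correctly against the partial column sums so as to match the far-eastern reading word. If one prefers a self-contained verification avoiding the tableau-crystal realization, the same matching can be done by hand: compare $\max_{\widetilde{k}} C\ep^{\widetilde{k}}_i(P)$ and the column selected by $Cf_i$ directly with the signature-rule computation of $\ep_i(T_P)$ and the box modified by $f_i(T_P)$, which reduces to the elementary bracketing already appearing in the proofs of Lemmas~\ref{lem:recep} and~\ref{lem:cerep}.
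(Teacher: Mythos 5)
Your proposal is correct and follows essentially the same route as the paper: the paper also factors $\phi$ through the column decomposition of Equation~(\ref{eqn:col}) on one side and the right-to-left column (admissible) reading embedding $\mathcal{B}^{\mgl_n}_{\la}\hookrightarrow \mathcal{B}^{\mgl_n}_{\varpi_{(\la^{\text{tr}})_m}}\otimes\hdots\otimes\mathcal{B}^{\mgl_n}_{\varpi_{(\la^{\text{tr}})_1}}$ on the other, and concludes by the commutativity of the resulting triangle of injective $\mgl_n$--crystal morphisms. Your added observations (that the $k$-th column of $T_P$ is exactly the support of the $k$-th column of $P$, and that the domain is stable under $Ce_i,Cf_i$ by Lemma~\ref{lem:cerep}) are correct and merely make explicit what the paper leaves implicit.
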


\begin{remark}
	\label{rem:crtableau}
	The crystal $\mathcal{B}^{\mgl_n}_{\la}$ with $\la=(\la_1,\la_2,\hdots,\la_n) \in \Z^n_{\geq 0}$, where $\la_1 \geq \la_2 \geq \hdots \geq \la_n$, corresponds to the irreducible $\mgl_n$--representation with highest weight $\la=\la_1\epsilon_1+\la_2\epsilon_2+\hdots+\la_n\epsilon_n$ (see \cite{HoKa}). The elements of the crystal are the SSYT with shape $\la$ and filling from the alphabet $\{1,2,\hdots,n\}$. The Kashiwara operators $Ce_i,Cf_i$ act by: For each tableau, create a list by looking at the columns from left to right and adding a ``$+$'' if the column contains $i+1$, and a ``$-$'' if the column contains $i$ (if it contains both, put ``$+$'' in front of ``$-$''). Then consecutively cancel all the ``$+-$'' pairs until all the ``$-$'' are on the left and all the ``$+$'' are on the right. Then $C\ep_i$ is equal to the number of unpaired ``$+$'' and $C\phi_i$ is the number of unpaired ``$-$''. The operator $Ce_i$ then changes the $i+1$ corresponding to the leftmost unpaired ``$+$'' to an $i$, and $Cf_i$ changes the $i$ corresponding to the rightmost ``$-$'' to an $i+1$. The weight of a tableau can be recovered by considering how many times each number appears in the filling, it is $(\# 1's, \# 2's, \hdots, \# n's)$. \cite{HoKa}
\end{remark}

\begin{example}\label{ex:crmor}
	Continuing the setting in Example \ref{ex:tabl}, we show that the map $\phi$ respects the action of $Ce_2$.
	
	\[
	\begin{tikzcd}[ampersand replacement=\&]
	{\small \left[ \begin{array}{ccccc}
		1 & 1 & 1 & 0 & 0  \\
		1 & 0 & 0 & 1 & 0  \\
		1 & 1 & 1 & 0 & 1  \\
		\end{array}\right]}  \arrow{r}{Ce_2} \arrow[swap]{d}{\phi} \& 
	{\small \left[ \begin{array}{ccccc}
		1 & 1 & 1 & 0 & 0  \\
		1 & 1 & 0 & 1 & 0  \\
		1 & 0 & 1 & 0 & 1  \\
		\end{array}\right]} \arrow{d}{\phi} \\
	\begin{ytableau}
	1 & 1 & 1 & 2 & 3 \\
	2 & 3 & 3 \\
	3
	\end{ytableau}  \arrow{r}{Ce_2} \& 
	\begin{ytableau}
	1 & 1 & 1 & 2 & 3 \\
	2 & 2 & 3 \\
	3
	\end{ytableau}
	\end{tikzcd}
	\]
	
\end{example}

\begin{proof}{(of Lemma \ref{lem:ficom})}
	Let:
	$$\Lambda^N_R B_{n,m}=\{P \in \Lambda^N B_{n,m}\; | \; Re_i(P)=0 \quad \forall \; i=1,2,\hdots,m-1\}$$ and for each $\la \subset n \times m, \; |\la|=N$, let $(\Lambda^N_R B_{n,m})_{\la}$ denote the subset of elements with highest (row) weight $\la^{\text{tr}}$, i.e.
	\[(\Lambda^N_R B_{n,m})_{\la}=\left\{P \in \Lambda^N_R B_{n,m}\;  : \; \left(\sum^n_{i=1}P_{i,1},\hdots,\sum^n_{i=1}P_{i,m}\right)=(\la)^{\text{tr}}\right\}\]
	Then we get two injective $\mgl_n$--crystal morphisms shown below. The first one is the restriction of the map in Equation (\ref{eqn:col}) on page 9, while the second maps any tableau in $\mathcal{B}^{\mgl_n}_{\la}$ to a tensor product of its columns, taken from right to left, as this gives a so-called {\sl admissible reading} of the tableau (see \cite{HoKa}, Theorem 7.3.6).

		\begin{align*}
		&(\Lambda^N_R B_{n,m})_{\la}  \hookrightarrow \mathcal{B}^{\mgl_n}_{\varpi_{(\la^{\text{tr}})_m}} \otimes \hdots \otimes \mathcal{B}^{\mgl_n}_{\varpi_{(\la^{\text{tr}})_1}} \\
		&\left[\begin{array}{ccc}
		| & & | \\
		c_1 & \hdots & c_m \\
		| & & | 
		\end{array}\right] \mapsto c_m \otimes \hdots \otimes c_1 \\
		& \\
		&\mathcal{B}^{\mgl_n}_{\la}  \hookrightarrow \mathcal{B}^{\mgl_n}_{\varpi_{(\la^{\text{tr}})_m}} \otimes \hdots \otimes \mathcal{B}^{\mgl_n}_{\varpi_{(\la^{\text{tr}})_1}} \\
		&T \mapsto T_m \otimes \hdots \otimes T_1 \quad \text{($T_i=$ $i$--th column of the tableau $T$)}
		\end{align*}
	
	The map $\phi$ makes the following diagram commute, and so is itself a $\mgl_n$--crystal morphism:
	\begin{center}
		\begin{tikzcd}[column sep=small]
			(\Lambda^N_R B_{n,m})_{\la} \arrow{rr}{\phi} \arrow[swap]{dr}{}& &\mathcal{B}^{\mgl_n}_{\la} \arrow{dl}{}\\
			& \mathcal{B}^{\mgl_n}_{\varpi_{(\la^{\text{tr}})_m}} \otimes \hdots \otimes \mathcal{B}^{\mgl_n}_{\varpi_{(\la^{\text{tr}})_1}} & 
		\end{tikzcd}
	\end{center}  
\end{proof}

\begin{lemma}
	The map defined above $$\psi:\{Q \in \Lambda^NB_{n,m}\; | \; Cf_i(Q)=0 \quad \forall \; i=1,2,\hdots,n\} \longrightarrow \bigsqcup_{\stackrel{\la \in n\times m}{ |\la|=N}}\mathcal{B}^{\mgl_m}_{\la^{\text{tr}}}$$ is a $\mgl_m$--crystal morphism.
\end{lemma}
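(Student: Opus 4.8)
The plan is to run the argument for Lemma \ref{lem:ficom} with the roles of rows and columns (equivalently of $\mgl_n$ and $\mgl_m$) interchanged and ``highest weight'' replaced by ``lowest weight.'' Write
$$\Lambda^N_C B_{n,m}=\{Q \in \Lambda^N B_{n,m}\; | \; Cf_i(Q)=0 \;\; \forall\, i=1,\hdots,n-1\}$$
for the set of $\mgl_n$--lowest weight matrices, and for each partition $\la \subset n\times m$ with $|\la|=N$ let $(\Lambda^N_C B_{n,m})_{\la}$ be the subset on which $\overline{C\wt(Q)}=\la$. Because the row operators $Re_i$ and $Rf_i$ preserve the $\mgl_n$--weight $C\wt$ (the first lemma following Proposition \ref{prop:mor}), this subset is stable under $Re_i,Rf_i$, so it is a $\mgl_m$--subcrystal; and since $(\la^{(m)})^{\text{tr}}=\overline{C\wt(Q)}=\la$, the image $T_Q=\psi(Q)$ has shape $\la^{\text{tr}}$, i.e. $\psi$ sends this subcrystal into $\mathcal{B}^{\mgl_m}_{\la^{\text{tr}}}$.

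I would then realize $\psi$ through a commuting triangle, exactly as for $\phi$. The common target is the tensor product of fundamental $\mgl_m$--crystals $\mathcal{B}^{\mgl_m}_{\varpi_{\la_n}} \otimes \hdots \otimes \mathcal{B}^{\mgl_m}_{\varpi_{\la_1}}$, into which there are two injective $\mgl_m$--crystal morphisms: the restriction of the row decomposition of Equation (\ref{eqn:row}), $Q \mapsto r_1 \otimes \hdots \otimes r_n$ (empty rows giving trivial tensor factors), and the map sending a tableau $T \in \mathcal{B}^{\mgl_m}_{\la^{\text{tr}}}$ to the tensor product of its columns read from right to left, which is an admissible reading and hence a crystal embedding by \cite{HoKa}, Theorem 7.3.6. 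Once the triangle is shown to commute, $\psi$ is the composite of the first embedding with the inverse of the second restricted to its image, so it is a $\mgl_m$--crystal morphism.

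The substantive step is the commutativity, and it rests on a single combinatorial identity: unwinding $(\la^{(i)})^{\text{tr}}=\sum^{i}_{j=1}\overline{cQ}_j$, a box in row $r$ and column $k$ of $T_Q$ receives label $i$ exactly when $i$ is the position of the $r$--th one in row $n+1-k$ of $Q$. Hence the $k$--th column of $T_Q$, read top to bottom, lists the indices $j$ with $Q_{n+1-k,\,j}=1$; that is, the $k$--th column of $T_Q$ equals the $(n+1-k)$--st row of $Q$ as an element of $\mathcal{B}^{\mgl_m}_{\varpi_{\la_k}}$. Reading the columns of $T_Q$ from right to left therefore reproduces $r_1 \otimes \hdots \otimes r_n$, which is precisely the image of $Q$ under Equation (\ref{eqn:row}).

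The main obstacle I anticipate is bookkeeping rather than conceptual: the definition of $\psi$ traverses each column of $Q$ from bottom to top in $\overline{cQ}_j$, while the admissible reading traverses the columns of the tableau from right to left, and these two reversals must be reconciled so that column $k$ of $T_Q$ matches row $n+1-k$ (and not row $k$) of $Q$. Getting this reflection correct is the lowest-weight counterpart of the column-matching underlying Lemma \ref{lem:ficom}; once it is in place, compatibility with every $Re_i$ and $Rf_i$ is automatic from the two embeddings being crystal morphisms.
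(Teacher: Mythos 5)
Your proof is correct and takes essentially the same route as the paper, which disposes of this lemma by observing that after rotating $Q$ by $90^{\circ}$ the argument of Lemma \ref{lem:ficom} applies; you instead carry out that analogous commuting-triangle argument explicitly, with the row decomposition of Equation (\ref{eqn:row}) in place of the column one, and your column-matching identity (column $k$ of $T_Q$ is row $n+1-k$ of $Q$) is exactly the right reconciliation of the two reversals. One small point: stability of the domain under $Re_i, Rf_i$ requires not only preservation of $C\wt$ but also of $C\phi_j$ (the analogue of Equation (\ref{eqn:phi}) in Proposition \ref{prop:mor}), which is what guarantees that $Re_i, Rf_i$ send $\mgl_n$--lowest weight matrices to $\mgl_n$--lowest weight matrices.
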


\begin{proof}
	We want to show that for any $Re_k, \; k=1,2,\hdots, m-1$, and any $Q \in \Lambda^N B_{n,m}$ such that $Cf_i(Q)=0 \; \forall \; i=1,2,\hdots,n-1$ and $Re_k(Q) \neq 0$, the following diagram commutes. The proof is analogous for $Rf_k$.
	\[
	\begin{tikzcd}[ampersand replacement=\&]
	Q \arrow{r}{Re_k} \arrow[swap]{d}{\psi} \& 
	Q' \arrow{d}{\psi} \\
	T_Q  \arrow{r}{Re_k} \& 
	T_{Q'}
	\end{tikzcd}
	\]
	When considering the action of $Re_k$, we are looking at columns $k$ and $k+1$ of $Q$. The setup here is the same as in Lemma \ref{lem:ficom} if we consider $Q$ rotated $90^{\circ}$ clockwise, and everything else follows through analogously, including that $\psi$ preserves the values of $R\ep_k, \; R\phi_k$, and $R\wt$.
\end{proof}

\vspace{0.5cm}
Finally, we will check that the map is indeed an isomorphism, i.e. it remains to show it is a bijection on sets.

\begin{lemma}[See also \cite{vL}, Section 3]
	The $\mgl_n \times \mgl_m$--crystal morphism $\Lambda^NB_{n,m} \longrightarrow \bigsqcup_{\stackrel{\la \subset n \times m}{|\la|=N}}\mathcal{B}^{\mgl_n}_{\la} \otimes \mathcal{B}^{\mgl_m}_{\la^{\text{tr}}}$ given by the maps below is an isomorphism:
	\[M \xrightarrow{(Re^{\text{max}},Cf^{\text{max}})} P \otimes Q \xrightarrow{(\phi,\psi)} T_P \otimes T_Q\]
\end{lemma}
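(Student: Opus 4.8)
The plan is to exhibit a two-sided inverse to the morphism $F := (\phi,\psi)\circ(Re^{\text{max}},Cf^{\text{max}})$, which by the preceding lemmas is already a morphism of $\mgl_n\times\mgl_m$--crystals; constructing an explicit inverse proves simultaneously injectivity and surjectivity. The first ingredient is that $\phi$ and $\psi$ are bijections onto semistandard tableaux. From the construction of $\phi$, a tableau $T_P$ of shape $\la$ determines the chain $\emptyset=\la^{(0)}\subset\hdots\subset\la^{(n)}=\la$, and hence recovers the rows of $P$ via the differences $rP_i=(\la^{(i)})^{\text{tr}}-(\la^{(i-1)})^{\text{tr}}$, so $\phi^{-1}$ is well defined; the same argument applied to columns gives $\psi^{-1}$.

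The heart of the argument is the claim that, for a $\mgl_m$--highest weight matrix $P$ of shape $\la$, the composite $\psi\circ Cf^{\text{max}}$ restricts to a $\mgl_m$--crystal isomorphism from the $\mgl_m$--connected component of $P$ onto $\mathcal{B}^{\mgl_m}_{\la^{\text{tr}}}$. I would prove this in steps. First, since $P$ is $\mgl_m$--highest weight with dominant $\mgl_m$--weight $R\text{wt}(P)=\la^{\text{tr}}$ and $\Lambda^N B_{n,m}$ is a normal $\mgl_m$--crystal by the decomposition in Equation (\ref{eqn:row}), the $\mgl_m$--component of $P$ is exactly $\mathcal{B}^{\mgl_m}_{\la^{\text{tr}}}$. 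Next, set $L:=Cf^{\text{max}}(P)$. Because each $Cf_j$ is a $\mgl_m$--crystal morphism (Proposition \ref{prop:mor}), $Cf^{\text{max}}$ commutes with $Re_i,Rf_i$; hence $Re_i(L)=Cf^{\text{max}}(Re_i(P))=0$, so $L$ is again $\mgl_m$--highest weight with $R\text{wt}(L)=R\text{wt}(P)=\la^{\text{tr}}$, and every element $Rf^{w}(L)$ of its $\mgl_m$--component satisfies $Cf_j(Rf^{w}(L))=Rf^{w}(Cf_j(L))=0$, i.e.\ is $\mgl_n$--lowest weight. Consequently $Cf^{\text{max}}$ maps the $\mgl_m$--component of $P$ into that of $L$ as a morphism sending the highest weight element $P$ to the highest weight element $L$, so it is an isomorphism between two copies of $\mathcal{B}^{\mgl_m}_{\la^{\text{tr}}}$; and $\psi$, a $\mgl_m$--crystal morphism that is bijective onto tableaux, is an isomorphism on this component of $\mgl_n$--lowest weight matrices.

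With the claim in hand the inverse is immediate. Given $T_P\otimes T_Q$ in the summand indexed by $\la$, set $P:=\phi^{-1}(T_P)$, a $\mgl_m$--highest weight matrix of shape $\la$; then $\psi\circ Cf^{\text{max}}$ is a bijection from the $\mgl_m$--component of $P$ onto $\mathcal{B}^{\mgl_m}_{\la^{\text{tr}}}$, so there is a unique $M$ in that component with $(\psi\circ Cf^{\text{max}})(M)=T_Q$. Since $P$ is the unique $\mgl_m$--highest weight element of that component, $Re^{\text{max}}(M)=P$ and $\phi(Re^{\text{max}}(M))=T_P$, so $F(M)=T_P\otimes T_Q$; conversely this $M$ is forced, giving a two-sided inverse and hence the isomorphism. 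I expect the main obstacle to be the claim, and specifically its reliance on Proposition \ref{prop:mor}: it is precisely the commuting of the two crystal structures that lets $Cf^{\text{max}}$ carry $\mgl_m$--components isomorphically to $\mgl_m$--components, so that the $\mgl_n$-- and $\mgl_m$--data recorded by $T_P$ and $T_Q$ can be disentangled and reassembled into a single matrix $M$.
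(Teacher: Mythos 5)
Your proof is correct, and its overall strategy matches the paper's: invert $\phi$ and $\psi$ explicitly by reading the matrix off the chain of partitions, and use the commutativity of the two crystal structures (Proposition \ref{prop:mor}) to handle $(Re^{\text{max}},Cf^{\text{max}})$. Where you differ is in how that commutativity is deployed. The paper argues surjectivity directly: given $P\otimes Q$, it notes that $Cf^{\text{max}}P=Re^{\text{max}}Q=L$ is the Southwest-aligned matrix of Lemma \ref{lem:part}, picks explicit words $Cf_{j_1}\cdots Cf_{j_p}P=Re_{i_1}\cdots Re_{i_q}Q=L$, and manufactures the preimage $M:=Rf_{i_q}\cdots Rf_{i_1}P=Ce_{j_p}\cdots Ce_{j_1}Q$; injectivity is then dispatched by the remark that the morphism kills no nonzero element. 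You instead package the same commutativity into the claim that $\psi\circ Cf^{\text{max}}$ restricts to a $\mgl_m$--crystal isomorphism from the $\mgl_m$--component of each highest weight $P$ onto $\mathcal{B}^{\mgl_m}_{\la^{\text{tr}}}$, which produces a genuine two-sided inverse and so gives injectivity and surjectivity in one stroke --- arguably cleaner than the paper's appeal to nonvanishing of the morphism, at the cost of a slightly longer component-by-component analysis. Two points worth tightening: the identity $Cf_j(Rf^{w}(L))=Rf^{w}(Cf_j(L))$ is not literally licensed by Proposition \ref{prop:mor} when $Cf_j(L)=0$ (the commutation relations there carry nonvanishing hypotheses), and the clean statement is that $Rf_i$ preserves $C\phi_j$, so $C\phi_j(Rf^{w}(L))=C\phi_j(L)=0$; likewise, surjectivity of $\phi$ onto all of $\mathcal{B}^{\mgl_n}_{\la}$ needs the easy but unstated check that the matrix built from an arbitrary semistandard tableau of shape $\la$ via horizontal strips is automatically $\mgl_m$--highest weight (the paper elides this as well).
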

\begin{proof}
	Let us first consider the map $(Re^{\text{max}},Cf^{\text{max}})$. As it doesn't map any nonzero element to zero, it suffices to check it is surjective. For a given $\la \subset n \times m,\;|\la|=N$, take any pair $P \otimes Q \in \mathcal{B}^{\mgl_n}_{\la} \otimes \mathcal{B}^{\mgl_m}_{\la^{\text{tr}}}$. Namely, $P, Q \in \text{Mat}_{n \times m}(\{0,1\})$ such that $Re_i(P)=0 \; \forall \; i=1,2,\hdots,m-1$, $Cf_j(Q)=0 \; \forall \; j=1,2,\hdots, n-1$ and $\la^{\text{tr}}=\sum^n_{s=1}rP_s$, $\la=\sum^m_{s=1}\overline{cQ}_s$. Then, as discussed in Lemma \ref{lem:part}, $Cf^{\text{max}}P=Re^{\text{max}}Q=L$ is the matrix with $N$ $1$'s filling the shape of $\la$ aligned Southwest. Suppose in particular that two sequences that realize $L$ are:
	\[Cf_{j_1}Cf_{j_2} \hdots Cf_{j_p}P=Re_{i_1} Re_{i_2} \hdots Re_{i_q}Q\]
	Using the commutativity properties of the row and column Kashiwara operators, we take:
	\[M:=Rf_{i_q} \hdots Rf_{i_2} Rf_{i_1}P = Ce_{j_p} \hdots Ce_{j_2} Ce_{j_1}Q \neq 0\]
	Then $M \in \Lambda^N{B_{n,m}}$  has the property that $(Re^{\text{max}},Cf^{\text{max}})(M)=P \otimes Q$. When considering $(\phi,\psi)$, given a tableau $T_P \in \mathcal{B}^{\mgl_n}_{\la}$, we can recover the $n \times m$ matrix $P$ such that $\phi(P)=T_P$ in the following way: For each $k=1,\hdots,n$, look at the entries labelled $k$ in $T_P$. If such an entry appears in the $j$--th column of the tableau, then set $P_{k,j}=1$ while the remaining elements in the $k$--th row of the matrix are set to zero.
	Similarly, given a tableau $T_Q \in \mathcal{B}^{\mgl_m}_{\la^{\text{tr}}}$, we can recover the $n\times m$ matrix $Q$ such that $\psi(Q)=T_Q$ as follows: For each $l=1,\hdots,m$, look at the entries labelled $l$ in $T_Q$. If such an entry appears in the $j$--th column of the tableau, then set $Q_{n+1-j,l}=1$ while the remaining elements in the $l$--th column of the matrix $Q$ are set to zero.
\end{proof}
\subsection{Two cactus group actions}

Henriques and Kamnitzer \cite{HK} define an action of the cactus group $C_n=C_{\mgl_n}$ on $n$--tensor products of $\fg$--crystals (for any complex reductive Lie algebra $\fg$). For any $s_{p,q} \in C_n$, $1 \leq p < q \leq n$ corresponding to the Dynkin subdiagram $\{p,\hdots,q-1\}$ of $\mgl_n$,  and any tensor product $B_1 \otimes \hdots \otimes B_n$, where each $B_i$ is a $\fg$--crystal, we get a map:
\[s^{\text{o}}_{p,q}: B_1\otimes \hdots \otimes B_p \otimes \hdots \otimes B_q \otimes B_n \rightarrow B_1\otimes \hdots \otimes B_q \otimes \hdots \otimes B_p\otimes B_n\]
For any element $b_1 \otimes \hdots \otimes b_n$ of $B_1 \otimes B_2 \otimes \hdots \otimes B_n$:
\[\begin{array}{c}s^{\text{o}}_{p,q}(b_1 \otimes b_2 \otimes \hdots \otimes b_n)= \\
\hspace{-0.2cm}=b_1 \otimes \hdots b_{p-1} \otimes\xi(\xi(b_q) \otimes \xi(b_{q-1}) \otimes \hdots \otimes \xi(b_{p+1}) \otimes \xi(b_p)) \otimes b_{q+1} \otimes \hdots \otimes b_n\end{array}\]
Here $\xi$ is the Sch\"{u}tzenberger involution as in Definition \ref{def:schu}. If $\text{flip}^{p,q}$ denotes the permutation that flips the interval $\left[p,q\right]$, then $s_{p,q}$ acts as:
\[s^{\text{o}}_{p,q}=\text{Id} \otimes \hdots \otimes \text{Id} \otimes \xi_{p,q} \circ (\xi \otimes \hdots \otimes \xi) \circ \text{flip}^{p,q} \otimes \text{Id} \otimes \hdots \otimes \text{Id}\]
We claim that this ``outer'' cactus group action agrees with the ``inner'' cactus group action from Section \ref{sec:cactus} when considered on the crystal $\Lambda^NB_{n,m}$, using skew Howe duality for $\mgl_n$ and $\mgl_m$.

\begin{lemma}
	\label{lem:flip}
	For the Sch\"{u}tzenberger involution $\xi=\xi_{\mathcal{B}^{\mgl_m}_{\varpi_k}}$, and any element $b=(b_1, b_2, \hdots, b_m) \in \mathcal{B}^{\mgl_m}_{\varpi_k}$, $\varpi_k$ being the $k$--th fundamental weight of $\mgl_m$:  \[\xi(b)_i=b_{m+1-i}, \quad \quad \text{ i.e. } \quad \xi(b_1,\; b_2, \; \hdots, \; b_m)=(b_m,\; \hdots,\; b_2, \; b_1)\]
\end{lemma}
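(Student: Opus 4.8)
The plan is to invoke the characterization of $\xi_{\varpi_k}$ as the \emph{unique} set map on $\mathcal{B}^{\mgl_m}_{\varpi_k}$ satisfying the three relations recalled before Definition \ref{def:schu}: $Re_i\circ\xi=\xi\circ Rf_{\theta(i)}$, $Rf_i\circ\xi=\xi\circ Re_{\theta(i)}$, and $R\wt(\xi(b))=w_0\cdot R\wt(b)$. I define the reversal map $\eta(b_1,\dots,b_m)=(b_m,\dots,b_1)$, which clearly preserves the number of $1$'s and so restricts to a bijection of $\mathcal{B}^{\mgl_m}_{\varpi_k}$; it then suffices to check that $\eta$ satisfies these three relations, after which uniqueness forces $\eta=\xi_{\varpi_k}$.

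First I would record the explicit crystal structure on $\mathcal{B}^{\mgl_m}_{\varpi_k}$, whose elements are the length-$m$ binary vectors with exactly $k$ entries equal to $1$ (the rows appearing in Equation (\ref{eqn:row})), with $R\wt(b)=(b_1,\dots,b_m)$. Since $\langle R\wt(b),\alpha_i^\vee\rangle=b_i-b_{i+1}\in\{-1,0,1\}$, the operator $Re_i$ sends a vector with $(b_i,b_{i+1})=(0,1)$ to the one with those entries replaced by $(1,0)$ and annihilates every other vector, while $Rf_i$ performs the reverse swap; this matches the row action used in Lemma \ref{lem:recep}. I would also recall that for $\mgl_m$ one has $\theta(i)=m-i$ and that $w_0$ reverses coordinates, $w_0\cdot(a_1,\dots,a_m)=(a_m,\dots,a_1)$.

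The weight relation is then immediate, as $R\wt(\eta(b))=(b_m,\dots,b_1)=w_0\cdot R\wt(b)$. For the Kashiwara relations the heart of the matter is a local index computation: the entries of $\eta(b)$ in positions $i,i+1$ are $(b_{m+1-i},b_{m-i})$, so $Re_i$ acts on $\eta(b)$ exactly when $(b_{m-i},b_{m+1-i})=(1,0)$, which is precisely the condition for $Rf_{m-i}=Rf_{\theta(i)}$ to act on $b$ in positions $m-i,m-i+1$. Tracking the single entry-swap through the reversal shows $Re_i(\eta(b))=\eta(Rf_{\theta(i)}(b))$, with both sides equal to $0$ in all remaining cases; the relation $Rf_i\circ\eta=\eta\circ Re_{\theta(i)}$ follows from the symmetric computation interchanging the roles of $e$ and $f$.

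The main obstacle — indeed essentially the only subtlety — is the careful alignment of indices under reversal: confirming that the position pair $(i,i+1)$ on the $\eta(b)$ side corresponds to $(m-i,m-i+1)$ on the $b$ side and that the roles of $Re$ and $Rf$ interchange accordingly. Once this dictionary is fixed, the verification reduces to a direct inspection of the two entries involved, and with the three defining relations in hand the uniqueness of the Sch\"{u}tzenberger involution gives $\eta=\xi_{\varpi_k}$, which is the claim.
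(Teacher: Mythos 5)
Your argument is correct, and it takes a genuinely different (though closely related) route from the paper. The paper computes $\xi$ explicitly on an arbitrary element $\{i_1,\hdots,i_k\}$ by writing it as a word in the lowering operators applied to the highest weight element $\{1,\hdots,k\}$, pushing $\xi$ through that word via the intertwining relations $\xi f_j = e_{\theta(j)}\xi$, and reading off that the image is $\{m+1-i_k,\hdots,m+1-i_1\}$. You instead take the reversal map $\eta$ as a candidate, verify directly that it satisfies the three defining relations (the weight relation via $w_0$ reversing coordinates, and the two Kashiwara relations via the local index computation identifying the position pair $(i,i+1)$ in $\eta(b)$ with $(m-i,m-i+1)$ in $b$ and $\theta(i)=m-i$), and then invoke the uniqueness of $\xi_{\varpi_k}$ asserted just before Definition \ref{def:schu}. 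Your approach buys you a cleaner verification --- no need to exhibit an explicit operator word reaching a general element, only a two-entry local check --- at the cost of leaning on the uniqueness statement; the paper's computation is more self-contained and constructive, and as a by-product exhibits the path of operators realizing $\xi(b)$. One point worth making explicit in your write-up is that uniqueness applies because $\mathcal{B}^{\mgl_m}_{\varpi_k}$ is a single connected highest weight crystal, which is exactly the setting in which the paper defines $\xi_\la$ as the unique map with those properties.
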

\begin{proof}
	An element in $\mathcal{B}^{\mgl_m}_{\varpi_k}$ can be expressed as a vector in $(\{0,1\})^m$ with $k$ of its entries equal to $1$. If they occupy positions $1\leq i_1 < i_2 < \hdots < i_k \leq m$, that element can be represented as $\{i_1,\hdots, i_k\}$. In particular, the highest and lowest weight elements are $(1, 1, \hdots, 1, 0, \hdots, 0)$ and $(0,\hdots, 0, 1, \hdots, 1)$, represented by $\{1, 2,\hdots, k\}$ and $\{m+1-k,\hdots, m-1, m\}$ respectively. Additionally, let $f_{j_1,\hdots,j_s}:=f_{j_s} \hdots f_{j_1}$ and $e_{j_1,\hdots,j_s}:=e_{j_s} \hdots e_{j_1}$ represent a composition of Kashiwara operators for the crystal $\mathcal{B}^{\mgl_m}_{\varpi_k}$. Then any element $\{i_1,\hdots, i_k\}$ can be expressed in terms of the highest weight element and a sequence of lowering Kashiwara operators, so we get:
	\begin{align*}
		&\{i_1,\hdots, i_k\}=f_{1,2,\hdots,i_1-1}\hdots f_{k-1,k,\hdots,i_{k-1}-1}f_{k,k+1,\hdots,i_k-1}\{1, 2, \hdots, k\}  \\
		&\xrightarrow{\xi} e_{m-1,m-2,\hdots,m-i_1+1}\hdots e_{m-k,m-k-1,\hdots,m-i_k+1}\{m-k+1, \hdots, m\} \\
		&=\{m+1-i_k, m+1-i_{k-1},\hdots, m+1-i_1\}
	\end{align*}
\end{proof}

\begin{lemma}\label{lem:xinslm} Consider the $\mgl_k$--crystal Sch\"{u}tzenberger involution denoted $\xi=\xi^{\mgl_m}_{1,k}: \Lambda^N{B_{n,m}} \longrightarrow \Lambda^N{B_{n,m}}$, corresponding to the Dynkin subdiagram $\overline{k}=\{1,\hdots,k-1\}$ of $\mgl_m$. Then $\xi$ is a $\mgl_n$--crystal morphism. Analogously, the $\mgl_l$--crystal  Sch\"{u}tzenberger involution $\xi^{\mgl_n}_{1,l}$ is a $\mgl_m$--crystal morphism.
\end{lemma}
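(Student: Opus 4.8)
The plan is to verify directly the three defining conditions of a $\mgl_n$--crystal morphism for $\xi = \xi^{\mgl_m}_{1,k}$: that it preserves $C\wt$, $C\ep_j$ and $C\phi_j$, and that it commutes with the operators $Ce_j$ and $Cf_j$ for all $j \in \{1,\ldots,n-1\}$. The organizing observation is that $\xi$, being the Sch\"{u}tzenberger involution attached to the subdiagram $\overline{k}=\{1,\ldots,k-1\}$ of $\mgl_m$, maps each connected component of the $\overline{k}$--restricted $\mgl_m$--crystal to itself (Definition \ref{def:schu}). Hence any quantity that is constant on these components is automatically preserved by $\xi$, and to handle the Kashiwara operators it suffices to understand how $Ce_j$ and $Cf_j$ interact with such components.

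For the first two conditions, I would note that each of $C\wt$, $C\ep_j$ and $C\phi_j$ is constant along the arrows $Re_i, Rf_i$ with $i \in \overline{k}$: the weight statement is the first lemma following Proposition \ref{prop:mor}, the preservation of $C\ep_j$ is Lemma \ref{lem:recep}, and the preservation of $C\phi_j$ is Equation \eqref{eqn:phi} from the proof of Proposition \ref{prop:mor}. Consequently all three are constant on each connected component of the $\overline{k}$--restricted $\mgl_m$--crystal, and since $\xi$ fixes each such component setwise, it preserves $C\wt$, $C\ep_j$ and $C\phi_j$.

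The essential point is the commutation $Ce_j \circ \xi = \xi \circ Ce_j$ (the case of $Cf_j$ being identical). By Proposition \ref{prop:mor}, $Ce_j$ is a $\mgl_m$--crystal morphism, and restricting the $\mgl_m$--Dynkin diagram to $\overline{k}$ only drops operators, so $Ce_j$ is in particular a morphism for the $\overline{k}$--restricted structure. By Lemma \ref{lem:recep}, $C\ep_j$ is constant on each connected component $B_\mu$ of that structure, so $Ce_j$ either annihilates $B_\mu$ entirely (when $C\ep_j = 0$ there) or is nonzero on all of $B_\mu$; in the latter case the morphism axioms force $Ce_j$ to restrict to an isomorphism $B_\mu \xrightarrow{\sim} B'_\mu$ onto another component. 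When $Ce_j$ kills $B_\mu$, both $Ce_j \circ \xi$ and $\xi \circ Ce_j$ vanish on $B_\mu$ (as $\xi$ preserves $B_\mu$). When $Ce_j$ restricts to an isomorphism, I would invoke the naturality of the Sch\"{u}tzenberger involution under crystal isomorphisms: for $\iota := Ce_j|_{B_\mu}$ the map $\iota^{-1} \circ \xi_{B'_\mu} \circ \iota$ satisfies the same defining relations (with $\theta_{\overline{k}}$ and $w^{\overline{k}}_0$) as $\xi_{B_\mu}$, so by the uniqueness in Definition \ref{def:schu} it equals $\xi_{B_\mu}$, giving $\xi_{B'_\mu} \circ Ce_j = Ce_j \circ \xi_{B_\mu}$ on $B_\mu$. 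Assembling over all components yields $Ce_j \circ \xi = \xi \circ Ce_j$ globally.

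I expect the main obstacle to be this naturality step together with the kill-or-isomorphism dichotomy for $Ce_j$ on components: one must check carefully that a nonzero crystal morphism restricts to an isomorphism on a connected component, and that the defining relations of $\xi$ are genuinely transported by such an isomorphism. The dichotomy itself, however, is exactly what Lemma \ref{lem:recep} (constancy of $C\ep_j$ on $\overline{k}$--components) provides, so the remaining work is essentially bookkeeping. Finally, the analogous claim that $\xi^{\mgl_n}_{1,l}$ is a $\mgl_m$--crystal morphism follows by the transpose symmetry exchanging rows with columns and $\mgl_n$ with $\mgl_m$, now invoking Lemma \ref{lem:cerep} in place of Lemma \ref{lem:recep}.
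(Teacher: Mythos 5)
Your proposal is correct and follows essentially the same route as the paper: both reduce to the connected components of the $\overline{k}$--restricted crystal, use Proposition \ref{prop:mor} (via Lemmas \ref{lem:recep} and \ref{lem:cerep}) to see that $Ce_j$ acts as a morphism of that restricted structure with the kill-or-isomorphism dichotomy on components, and then settle the commutation with $\xi$ from its defining characterization. The only packaging difference is that the paper executes the last step by writing $b$ as a path of $Re_{j_\bullet}$'s from the lowest weight element and checking $\xi Ce_i(b^{\text{low}}_\la)=Ce_i(b_\la)$ directly, whereas you invoke the uniqueness of $\xi$ to obtain its naturality under the component isomorphism induced by $Ce_j$.
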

\begin{proof}
	We will focus on the commutativity of $\xi$ with $Ce_i$, $i \in \{1,\hdots,n-1\}$, as the remaining morphism properties are analogous. Take any nonzero element $ b \in \Lambda^N{B_{n,m}}$ such that $Ce_i(b)\neq 0$. Then $b$ belongs to some irreducible $\mgl_k$--crystal $\mathcal{B}^{\mgl_k}_\la \subset \left(\Lambda^N{B_{n,m}}\right)\left|_{\overline{k}}\right.$, i.e. a connected component of the restriction of $\Lambda^N{B_{n,m}}$ to $\overline{k}$, with highest weight $\la$, say, and highest and lowest weight elements $b_{\la}$ and $b^{\text{low}}_{\la}$, respectively (in terms of the row crystal operations).  We want to check that $\xi Ce_i(b)=Ce_i\xi(b)$. We expand each side using that there is a sequence of Kashiwara operators such that $b=Re_{j_s}Re_{j_{s-1}}\hdots Re_{j_1}(b^{\text{low}}_{\la})$, where $j_1,\hdots,j_s \in \overline{k}$, together with $Ce_iRe_j=Re_jCe_i, \; Ce_iRf_j=Rf_jCe_i$ from Proposition \ref{prop:mor} and $\xi Re_j=Rf_{k-j}\xi, \; \xi(b^{\text{low}}_{\la})=b_{\la}, \; \xi(b_{\la})=b^{\text{low}}_{\la}$ from Definition \ref{def:schu}. In this way, we get:
	\[b\xrightarrow{Ce_i}
	Re_{j_s}Re_{j_{s-1}}\hdots Re_{j_1}Ce_i(b^{\text{low}}_{\la}) \xrightarrow {\xi}Rf_{k-j_s}Rf_{k-j_{s-1}}\hdots Rf_{k-j_1}\xi Ce_i(b^{\text{low}}_{\la})\]
	\[b\xrightarrow{\xi}Rf_{k-j_s}Rf_{k-j_{s-1}}\hdots Rf_{k-j_1}(b_{\la})\xrightarrow{Ce_i}Rf_{k-j_s}Rf_{k-j_{s-1}}\hdots Rf_{k-j_1}Ce_i(b_{\la})\]
	It remains to show that $\xi Ce_i(b^{\text{low}}_{\la})=Ce_i(b_{\la})$. Since $Ce_i(b) \neq 0$, we also have $Ce_i(b_{\la}) \neq 0, \; Ce_i(b^{\text{low}}_{\la}) \neq 0$, as $Re_j$ and $Rf_j$ preserve $C\ep_i$, by Proposition \ref{prop:mor}. The proposition also tells us that $Ce_i$ commutes with $Re_j$ and preserves $R\ep_j, \; R\phi_j, \; R\wt$. In particular, this means $c_{\la}=Ce_i(b_{\la})$ and $c^{\text{low}}_{\la}=Ce_i(b^{\text{low}}_{\la})$ are the highest  and lowest weight elements of an irreducible $\mgl_k$--crystal in $\left(\Lambda^N{B_{n,m}}\right)_{\overline{k}}$, and so get exchanged by $\xi$.
\end{proof}

\begin{lemma}\label{lem:parcom} For all $k=1,\hdots,n$, $i=1, \hdots, k-1$, and $M \in \Lambda^N B_{n,m}$ s.t. $Ce_i(M) \neq 0$, respectively $Cf_i(M) \neq 0$, we have for $s_{1,k} \in C_n$:
	$$\so(Ce_i(M))=Cf_{k-i}(\so(M)) \quad \quad \so(Cf_i(M))=Ce_{k-i}(\so(M))$$
\end{lemma}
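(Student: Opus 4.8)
The plan is to factor $\so$ into a rotation and an $\mgl_m$--involution, and play each factor off against the $\mgl_n$--operators. Writing $\Lambda^N B_{n,m}$ through the row decomposition (\ref{eqn:row}), the inner Sch\"utzenberger involutions in the definition of $\so$ act on single rows $r_j \in \mathcal{B}^{\mgl_m}_{\varpi_{l_j}}$, so by Lemma \ref{lem:flip} each $\xi(r_j)$ merely reverses that row left--to--right; combined with the reversal of the row order $r_1,\dots,r_k \mapsto r_k,\dots,r_1$, the argument fed to the outer involution is exactly the $180^\circ$ rotation $\rho$ of the top $k\times m$ block, namely $\rho(M)_{p,q}=M_{k+1-p,\,m+1-q}$ for $p\le k$ (rows $k+1,\dots,n$ fixed). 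The outer involution is the full $\mgl_m$--Sch\"utzenberger involution $\Xi$ of that block. Hence, as maps on matrices, $\so=\Xi\circ\rho$, both factors fixing rows $k+1,\dots,n$.

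First I would show that $\rho$ intertwines $Ce_i$ with $Cf_{k-i}$. Since $i\le k-1$, the quantity $C\ep_i$ sees only rows $i,i+1$ of the top block, via $C\ep^{\widetilde l}_i(M)=\delta_{M_{i,l},0}\delta_{M_{i+1,l},1}+\sum_{s=l+1}^m M_{i+1,s}-\sum_{s=l+1}^m M_{i,s}$, and correspondingly (from the tensor rule, or $C\phi-C\ep=\langle C\wt,\alpha^\vee\rangle$) $C\phi^{\widetilde l}_j(M)=\delta_{M_{j,l},1}\delta_{M_{j+1,l},0}+\sum_{s=1}^{l-1}M_{j,s}-\sum_{s=1}^{l-1}M_{j+1,s}$. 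Substituting $\rho(M)_{p,q}=M_{k+1-p,m+1-q}$ with $j=k-i$ yields, for $l'=m+1-l$, the identity $C\phi^{\widetilde{l'}}_{k-i}(\rho M)=C\ep^{\widetilde l}_i(M)$. As $l$ runs over columns this is a bijection $l\leftrightarrow l'=m+1-l$ that turns ``closest to $m$'' into ``closest to $1$'', so the column in which $Ce_i$ fires on $M$ is carried to the column in which $Cf_{k-i}$ fires on $\rho M$; inspecting the two altered entries then gives $\rho(Ce_iM)=Cf_{k-i}(\rho M)$. The companion relation $\rho\circ Cf_i=Ce_{k-i}\circ\rho$ follows formally, using that $Ce_j,Cf_j$ are partial inverses and $\rho^2=\mathrm{id}$.

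Next I would show $\Xi$ commutes with $Ce_i$ and $Cf_i$ for $i\le k-1$. As $\Xi$ touches only rows $1,\dots,k$ and acts there as the full $\mgl_m$--Sch\"utzenberger involution of the block $\Lambda^{N'}B_{k,m}$ (where $N'$ is the number of $1$'s in those rows), and as $Ce_i$ for $i\le k-1$ depends only on rows $i,i+1$ and therefore coincides with the corresponding operator of the block, this is exactly Lemma \ref{lem:xinslm} applied to $\Lambda^{N'}B_{k,m}$ with the full $\mgl_m$ diagram: the $\mgl_m$--involution is a $\mgl_k$--crystal morphism, so $\Xi\, Ce_i=Ce_i\,\Xi$ and $\Xi\, Cf_i=Cf_i\,\Xi$. (Alternatively one reruns the expansion argument of that lemma, invoking Proposition \ref{prop:mor} for the block.)

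Combining the two steps, for $M$ with $Ce_i(M)\ne 0$ I obtain
\[\so(Ce_i M)=\Xi\rho(Ce_i M)=\Xi\,Cf_{k-i}(\rho M)=Cf_{k-i}\,\Xi(\rho M)=Cf_{k-i}(\so M),\]
and symmetrically $\so(Cf_iM)=Ce_{k-i}(\so M)$. The main obstacle is the middle step: checking that $\rho$ sends $Ce_i$ to $Cf_{k-i}$ on the nose, which demands the careful bookkeeping of the $C\ep$/$C\phi$ formulas under the rotation and, in particular, the verification that the column--selection rules match under $l\leftrightarrow m+1-l$. Once Lemma \ref{lem:xinslm} is invoked for the block, the commutation of $\Xi$ is comparatively routine.
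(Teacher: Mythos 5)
Your proposal is correct and follows essentially the same route as the paper: the paper also factors $\so$ on the top $k\times m$ block as $\xi^{\mgl_m}\circ p_k$ with $p_k=\xi^{\otimes k}\circ\mathrm{flip}^{1,k}$ acting as the $180^\circ$ rotation $p_k(L)_{s,t}=L_{k+1-s,m+1-t}$, invokes Lemma \ref{lem:xinslm} to commute the outer involution past $Cf_{k-i}$, and reduces to the identity $C\phi^{\widetilde{m+1-j}}_{k-i}(p_k(L))=C\ep^{\widetilde{j}}_i(L)$ together with the matching of the column-selection rules under $l\leftrightarrow m+1-l$. The only cosmetic difference is that you deduce the companion relation for $Cf_i$ formally from $\rho^2=\mathrm{id}$ and partial inverses, which is a fine shortcut.
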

\begin{proof}
	Let $M_{\left[k\right]}$ denote the submatrix made up of the first $k$ rows of $M$, denoted $r_1, \hdots, r_k \in (\{0,1\})^m$. Since $\so$ and $Ce_i, \; Cf_i, \; i=1,\; \hdots, \; k-1$, are determined by and affect only the first $k$ rows of $M$, we have $\so(M)_{\left[k\right]}=\so(M_{\left[k\right]})$ and similarly for $Ce_i, \; Cf_i$ so we will work with $L:=M_{\left[k\right]}$.
	
	Let $p_k:=\xi^{\otimes k} \circ \text{flip}^{1,k}$ and note that $\so(Ce_i(L))=\xi^{\mgl_m}_{1,k} (p_k(Ce_i(L)))$. On the other hand $Cf_{k-i}(\so(L))=Cf_{k-i}(\xi^{\mgl_m}_{1,k} (p_k(L)))=\xi^{\mgl_m}_{1,k}(Cf_{k-i}(p_k(L)))$ since, as shown in Lemma \ref{lem:xinslm}, $\xi^{\mgl_m}_{1,k}$ is a $\mgl_n$--crystal morphism. So, we need to check that $p_k(Ce_i(L))=Cf_{k-i}(p_k(L))$. To verify this, we will first check that $C\ep^{\widetilde{j}}_i(L)=C\phi^{\widetilde{m+1-j}}_{k-i}(p_k(L))$. From the definition and Lemma \ref{lem:flip}, $p_k(L)_{s,t}=L_{k+1-s,m+1-t}$, therefore:
	\begin{align*}C\ep^{\widetilde{j}}_i(L)&=\sum^{m}_{r=j}{(L_{i+1,r}-L_{i,r})} + \delta_{L_{i,j},1}\delta_{L_{i+1,j},0} \\
		C\phi^{\widetilde{m+1-j}}_{k-i}(p_k(L)) &=\sum^{m+1-j}_{r=1}{(p_k(L)_{k-i,r}-p_k(L)_{k+1-i,r})} + \\
		& \quad \; + \delta_{p_k(L)_{k+1-i,m+1-j},1}\delta_{p_k(L)_{k-i,m+1-j},0} \\
		&=\sum^{m+1-j}_{r=1}{(L_{i+1,m+1-r}-L_{i,m+1-r})} + \delta_{L_{i,j},1}\delta_{L_{i+1,j},0} \\
		&=C\ep^{\widetilde{j}}_i(L)
	\end{align*}
	So, if $l$ is the closest number to $m$ where the maximum  $C\ep^{\widetilde{l}}_i(L)=\max_{\widetilde{j}}{C\ep^{\widetilde{j}}_i(L)}$ is achieved, then $m+1-l$ is the closest number to $1$ where $C\phi^{\widetilde{m+1-l}}_{k-i}(p_k(L))=\max_{\widetilde{j}}{C\phi^{\widetilde{m+1-j}}_{k-i}(p_k(L))}$ is achieved. Namely, if $Ce_i$ acts in column $l$ of $L$, then $Cf_{k-i}$ acts in column $m+1-l$ of $p_k(L)$, and:
	
	\begin{align*} 
		p_k(Ce_i(L))_{s,t} &=\begin{cases} 
			L_{k+1-s,m+1-t} & (s,t) \neq (k+1-i,m+1-l), \\
			& \hspace{1.3cm} (k-i,m+1-l) \\
			1 & (s,t)=(k+1-i,m+1-l) \\
			0 & (s,t)=(k-i,m+1-l) \\ 
		\end{cases} \\
		Cf_{k-i}(p_k(L))_{s,t} &=
		\begin{cases} 
			p_k(L)_{s,t} & (s,t) \neq (k+1-i,m+1-l), \\
			& \hspace{1.3cm} (k-i,m+1-l) \\
			1 & (s,t)=(k+1-i,m+1-l) \\
			0 & (s,t)=(k-i,m+1-l) \\ 
		\end{cases}\end{align*}
	\begin{align*}
		& =\begin{cases} 
			L_{k+1-s,m+1-t} & (s,t) \neq (k+1-i,m+1-l), \\
			& \hspace{1.3cm} (k-i,m+1-l) \\
			1 & (s,t)=(k+1-i,m+1-l) \\
			0 & (s,t)=(k-i,m+1-l) \\ 
		\end{cases}
	\end{align*}
\end{proof}

\begin{theorem}
	\label{thm:agree} When considered on $\Lambda^NB_{n,m}$, the outer action of $C_n$ on the $n$--tensor product of $\mgl_m$--crystals defined by Henriques-Kamnitzer agrees with the inner cactus group action on a $\mgl_n$--crystal under the crystal isomorphism: $\Lambda^NB_{n,m} \cong \bigsqcup_{\stackrel{\la \subset n \times m}{|\la|=N}}\mathcal{B}^{\mgl_n}_{\la} \otimes \mathcal{B}^{\mgl_m}_{\la^{\text{tr}}}$. More precisely, for any $s_{p,q} \in C_n$ and any $M \in \Lambda^N{B_{n,m}}$:
	\[s^{\text{o}}_{p,q}(M)=s^{\text{i}}_{p,q}(M)\]
\end{theorem}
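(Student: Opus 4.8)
The plan is to exploit that both sides are genuine representations of $C_n$ and reduce the identity $s^{\text{o}}_{p,q}=s^{\text{i}}_{p,q}$ to the generators $s_{1,k}$. First I would record that $C_n$ is generated by the $s_{1,k}$, $k=2,\ldots,n$: applying the parabolic commutation relation (\ref{rel:parcomm}) with $J=\{1,\ldots,j-1\}$ and $J'=\{1,\ldots,j-i\}$, and using $\theta_J(a)=j-a$, gives $\theta_J(J')=\{i,\ldots,j-1\}$, hence $s_{1,j}s_{1,j-i+1}=s_{i,j}s_{1,j}$ and therefore $s_{i,j}=s_{1,j}s_{1,j-i+1}s_{1,j}$ by relation (\ref{rel:invol}). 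Since every connected subdiagram of $\mgl_n$ is an interval $\{i,\ldots,j-1\}$, the $s_{1,k}$ generate. As $M\mapsto s^{\text{o}}_{p,q}(M)$ (Henriques--Kamnitzer) and $M\mapsto s^{\text{i}}_{p,q}(M)$ (Theorem \ref{prop:action}) are both homomorphisms $C_n\to\mathrm{Aut}(\Lambda^N B_{n,m})$, it suffices to prove $\so=\sii$ for each $k$.

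For a fixed $k$ the goal is to recognize $\so$ as the $\mgl_n$--Schützenberger involution $\sii=\xi^{\mgl_n}_{1,k}$ for the subdiagram $J=\{1,\ldots,k-1\}$ of $\mgl_n$. By Definition \ref{def:schu}, on each connected $J$--component $B_\la$ the map $\sii$ is the unique self-map with $Ce_i\,\sii=\sii\,Cf_{k-i}$, $Cf_i\,\sii=\sii\,Ce_{k-i}$ (here $\theta_J(i)=k-i$), and $C\wt(\sii(M))=w_0^J\cdot C\wt(M)$. Lemma \ref{lem:parcom} shows exactly that $\so$ intertwines the column operators this way (using that $\so$ is an involution to rewrite $\so\,Ce_i=Cf_{k-i}\,\so$ as $Ce_i\,\so=\so\,Cf_{k-i}$). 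For the weight relation I would trace through $\so=\xi^{\mgl_m}_{1,k}\circ(\xi\otimes\cdots\otimes\xi)\circ\mathrm{flip}^{1,k}$: the flip reverses the first $k$ rows, each inner $\xi$ reverses a row while preserving its number of $1$'s (Lemma \ref{lem:flip}), and $\xi^{\mgl_m}_{1,k}$ preserves $C\wt$ because it is a $\mgl_n$--crystal morphism (Lemma \ref{lem:xinslm}); together these reverse the first $k$ row sums, giving $C\wt(\so(M))=w_0^J\cdot C\wt(M)$.

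With the defining relations in hand, it remains to show $\so$ restricts to $\xi_\la$ on each $B_\la$, i.e. that $\so$ maps every $J$--component to itself. The intertwining relations already carry $J$--components to $J$--components and send the $J$--highest weight element $b_\la$ to a $J$--lowest weight element, since $Cf_i\,\so(b_\la)=\so\,Ce_{k-i}(b_\la)=0$. To see it lands in the same component I would use that $\so$ fixes rows $k+1,\ldots,n$ and preserves the column sums $d_s=\sum_{j\le k}M_{js}$ (the latter because $d_s$ is the $\mgl_m$--weight of the first $k$ rows, which $(\xi\otimes\cdots\otimes\xi)\circ\mathrm{flip}^{1,k}$ and $\xi^{\mgl_m}_{1,k}$ each reverse, so $\so$ leaves it fixed); these invariants confine $\so$ to the block containing $b_\la$. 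Once $\so(b_\la)=b^{\mathrm{low}}_\la=\sii(b_\la)$, the intertwining relations propagate the equality $\so=\sii$ from $b_\la$ to all of $B_\la$, and hence to all of $\Lambda^N B_{n,m}$.

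The main obstacle is precisely this component--preservation step: the restriction of $\Lambda^N B_{n,m}$ to $J\subset\mgl_n$ is not multiplicity free (a single $\mgl_k$ isotype can occur several times within one block), so the abstract relations determine $\so$ only up to permuting isomorphic $J$--components, and one genuinely must check that $\so$ sends each $J$--highest weight matrix to the lowest weight matrix of its own component rather than of an isomorphic copy. I expect to settle this by combining the preserved block invariants above with the commuting $\mgl_m$--action and skew Howe duality (Equation (\ref{eq:skewhowe})), which separates the copies; the residual work (the flip/weight bookkeeping and the propagation along $J$--components) is then routine.
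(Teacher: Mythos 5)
Your reduction to the generators $s_{1,k}$ (via $s_{i,j}=s_{1,j}s_{1,j-i+1}s_{1,j}$) and your use of Lemma \ref{lem:parcom} to show that $\so$ satisfies the intertwining and weight relations of the partial Sch\"utzenberger involution are correct and consistent with the paper. The genuine gap is the step you yourself flag at the end: those relations determine $\sii=\xi_{B_J}$ only componentwise, so a map satisfying them globally is pinned down only up to permuting isomorphic $J$--components, and you never actually prove that $\so$ preserves each $J$--component. The invariants you propose for this (the fixed rows $k+1,\hdots,n$ and the preserved column sums of the top $k$ rows) cannot do the job on their own: within a single block of fixed column sums a $\mgl_k$--isotype can occur with multiplicity greater than one (already $\mathcal{B}_{(2,1,1)}$ occurs twice inside $\mathcal{B}_{\varpi_2}\otimes\mathcal{B}_{\varpi_1}\otimes\mathcal{B}_{\varpi_1}$ for $\mgl_3$), so these invariants confine $\so$ only to a union of isomorphic components, not to the component of $b_{\la}$. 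Your appeal to the commuting $\mgl_m$--action and skew Howe duality is the right idea --- the multiplicity space of each isotype is an irreducible $\mgl_m$--crystal, which admits no nontrivial automorphism --- but to deploy it you must still show that the induced permutation of $J$--highest-weight matrices commutes with the row operators \emph{and} fixes the matrices that are simultaneously $J$--highest and $\mgl_m$--highest weight. Identifying those doubly extremal matrices (Lemma \ref{lem:part}: partition-shaped matrices) and computing both $\so$ and $\sii$ on them is precisely the computational core of the paper's proof, and it is absent from your proposal; as written, the argument stops where the real work begins.

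For comparison, the paper closes the argument by a reduction that avoids the multiplicity problem altogether: since $\so$ and $\sii$ are both $\mgl_m$--crystal morphisms, it suffices to treat $\mgl_m$--highest-weight matrices $P$; writing $P_{[k]}=Cf_{i_s}\hdots Cf_{i_1}(L)$ with $L=Ce^{\max}(P_{[k]})$ doubly extremal, Lemma \ref{lem:parcom} and the defining relations of $\sii$ convert both sides into $Ce_{k-i_s}\hdots Ce_{k-i_1}$ applied to $\so(L)$, respectively $\sii(L)$, so everything reduces to the single identity $\so(L)=\sii(L)$. That identity is then verified explicitly: by uniqueness of the extremal-weight elements of $\mathcal{B}^{\mgl_m}_{\varpi_{\la_1}}\otimes\hdots\otimes\mathcal{B}^{\mgl_m}_{\varpi_{\la_k}}$ (and of the corresponding column decomposition), both actions send the partition in the upper-left corner of $L$ to its vertical reflection in the lower-left corner. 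If you import this computation on doubly extremal matrices into your framework, your route can be completed; without it, the component-preservation step remains unproved.
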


\begin{proof}
	It suffices to check the claim on elements which are of the form $s_{1,k} \in C_n, \; k=2,\hdots,n$ since they generate the group. Indeed, from the cactus relations, we have that for any $1 \leq p < q \leq n$, $s_{p,q}=s_{1,q}s_{1,q+1-p}s_{1,q}$.
	
	We will use the matrix version of the isomorphism:
	\begin{equation}\label{eqn:rsk*}\Lambda^NB_{n,m} \cong \bigsqcup_{\stackrel{\la \subset n \times m}{|\la|=N}}\mathcal{B}^{\mgl_n}_{\la} \otimes \mathcal{B}^{\mgl_m}_{\la^{\text{tr}}}\end{equation}
	namely the map $(Re^{\max},Cf^{\max})$. On the left side of Equation (\ref{eqn:rsk*}), we have the tensor product action of $\so$ on $n$--tensors of $\mgl_m$--crystals:
	$$\Lambda^N B_{n,m} \cong \bigsqcup_{\underline{l} \in \mathbb{N}^n, |\underline{l}|=N} \mathcal{B}^{\mathfrak{gl}_m}_{\varpi_{l_1}} \otimes \hdots \otimes \mathcal{B}^{\mathfrak{gl}_m}_{\varpi_{l_n}}$$
	
	On the right side of Equation (\ref{eqn:rsk*}), we have the internal action of $\sii$ on any element $b \otimes c \in \mathcal{B}^{\mgl_n}_{\la} \otimes \mathcal{B}^{\mgl_m}_{\la^{\text{tr}}}$, given by:
	$$\sii\cdot (b \otimes c)= (\xi_{1,k} \otimes \text{id}) \; (b \otimes c)=\xi_{1,k}(b) \otimes c$$
	
	Since $s_{1,k} \in C_n$ acting (as $s^{\text{o}}_{1,k}$) on $n$--tensor products of $\mgl_m$--crystals is a $\mgl_m$--crystal isomorphism (see \cite{HK}, Theorem 2), it suffices to check the statement of the theorem on $\mgl_m$--highest weight elements, on which $Re^{\text{max}}$ acts as the identity. So we need to check that the following diagram commutes for every $P \in \Lambda^N B_{n,m} \; \text{s.t. } Re_i(P)=0$ for all $i=1,2,\hdots,m-1$:
	\[
	\begin{tikzcd}[ampersand replacement=\&]
	\Lambda^N B_{n,m} \arrow{rrr}{(Re^{\max},Cf^{\max})} \arrow[swap]{d}{\so} \& \& \&
	\bigsqcup_{\stackrel{\la \subset n \times m}{|\la|=N}}\mathcal{B}^{\mgl_n}_{\la} \otimes \mathcal{B}^{\mgl_m}_{\la^{\text{tr}}} \arrow{d}{\sii} \\
	\Lambda^N B_{n,m}  \arrow{rrr}{(Re^{\max},Cf^{\max})} \& \& \&
	\bigsqcup_{\stackrel{\la \subset n \times m}{|\la|=N}}\mathcal{B}^{\mgl_n}_{\la} \otimes \mathcal{B}^{\mgl_m}_{\la^{\text{tr}}} \\
	\end{tikzcd}
	\vspace{-0.5cm}
	\]
	Starting from the top left, going first right and then down we get:
	\[
	\sii \circ (Re^{\max},Cf^{\max})(P)=\sii (P \otimes Cf^{\max}(P))= \sii(P) \otimes Cf^{\max}(P)
	\]
	Whereas, applying first $\so$ and then the horizontal map gives us:
	\begin{align*}
		&(Re^{\max},Cf^{\max}) \circ \so(P)=Re^{\max}(\so(P)) \otimes Cf^{\max}(\so(P))= \\ &=\so(Re^{\max}(P)) \otimes Cf^{\max}(\so(P))=\so(P) \otimes Cf^{\max}(\so(P))
	\end{align*}
	
	So, we need to verify that:
	
	\begin{enumerate}
		\item $\so(P)=\sii(P)$
		\item $Cf^{\max}(\so(P))=Cf^{\max}(P)$
	\end{enumerate}
	
	For (1), both $\so$ and $\sii$ are determined by and change only the top $k$ rows of $P$, so we will restrict our attention to them and call that submatrix $P_{\left[k\right]}$. Consider $L=Ce^{\max}(P_{\left[k\right]})=Ce_{i_1} \hdots Ce_{i_{s-1}}Ce_{i_s}(P_{\left[k\right]})$ where $\{i_1, \; i_2, \hdots, \; i_s\}$ is some path to the $\mgl_k$--highest weight element associated to $P_{\left[k\right]}$. Then $P_{\left[k\right]}=Cf_{i_s} Cf_{i_{s-1}}\hdots Cf_{i_1}(L)$, and so using Lemma \ref{lem:parcom}:
	\begin{align*}
		\so(P_{\left[k\right]}) &=\so(Cf_{i_s} Cf_{i_{s-1}} \hdots Cf_{i_1}(L))=Ce_{k-i_s}\so(Cf_{i_{s-1}} \hdots Cf_{i_1}(L))= \\
		&= \hdots = Ce_{k-i_s} Ce_{k-i_{s-1}}\hdots Ce_{k-i_1}\so(L)
	\end{align*}
	And similarly, $\sii(P_{\left[k\right]})=Ce_{k-i_s} Ce_{k-i_{s-1}}\hdots Ce_{k-i_1}\sii(L)$ by the definition of the inner action in Theorem \ref{prop:action}. So, it suffices to check that $\so(L)=\sii(L)$. Since $L$ is both $\mgl_m$-- and $\mgl_k$--highest weight, as discussed in Lemma \ref{lem:part} it is a matrix filled with zeros except the upper left corner, where $1$'s fill the shape of a partition $\la$ with $\la^{\text{tr}}=(\sum^k_{i=1}{P_{i,1}}, \sum^k_{i=1}{P_{i,2}}, \hdots, \sum^k_{i=1}{P_{i,m}})$. 
	
	In any crystal of the form $\mathcal{B}^{\mgl_m}_{\varpi_{i_1}} \otimes \hdots \otimes \mathcal{B}^{\mgl_m}_{\varpi_{i_k}}$, let $b^{\text{high}}_j$ denote the highest weight element in $\mathcal{B}^{\mgl_m}_{\varpi_{i_j}}$, i.e. $\wt(b^{\text{high}}_j)= \varpi_{i_j}$, and $b^{\text{low}}_j$ denote the lowest weight element, $\wt(b^{\text{low}}_j)= w^{\mgl_m}_0 \cdot\varpi_{i_j}$. We can consider $L \in \Lambda^{\ast} {B_{k,m}} \cong \bigsqcup_{\underline{i}} \mathcal{B}^{\mgl_m}_{\varpi_{i_1}} \otimes \hdots \otimes \mathcal{B}^{\mgl_m}_{\varpi_{i_k}}$, by looking at the rows of $L$. Under this correspondence, we can express $L$ as $b_1 \otimes b_2 \otimes \hdots \otimes b_k$, where $b_j \in (\{0,1\})^m$ is the $j$--th row of $L$. Furthermore, $L$ having entries $1$ filling the shape of the partition $\la=(\la_1,\hdots,\la_k)$ implies that $b_1 \otimes b_2 \otimes \hdots \otimes b_k  \in \mathcal{B}^{\mgl_m}_{\varpi_{\la_1}} \otimes \hdots \otimes \mathcal{B}^{\mgl_m}_{\varpi_{\la_k}}$ and each $b_i$ is a $\mgl_m$--highest weight element in $\mathcal{B}^{\mgl_m}_{\varpi_{\la_i}}$, i.e. $b_i=b^{\text{high}}_i$ and $\wt(b_i)=\varpi_{\la_i}$. Therefore, $\xi(b_i)=b^{\text{low}}_i \in \mathcal{B}^{\mgl_m}_{\varpi_{\la_i}}$, where $b^{\text{low}}_i$ with $\wt(b^{\text{low}}_i)=w^{\mgl_m}_0 \cdot \varpi_{\la_i}$ is the lowest weight element in that crystal. Then:
	\begin{align*}
	\so(L) &=\so(b^{\text{high}}_1 \otimes b^{\text{high}}_2 \otimes \hdots \otimes b^{\text{high}}_k) \\
	&= \xi(\xi(b^{\text{high}}_k) \otimes \hdots \otimes \xi(b^{\text{high}}_2) \otimes \xi(b^{\text{high}}_1)) 
	=\xi(b^{\text{low}}_k \otimes \hdots \otimes b^{\text{low}}_2 \otimes b^{\text{low}}_1)
	\end{align*}
	For the final computation, we will consider the weights of the elements in the crystal we are working with.  For any other element $b=b_1\otimes b_2 \otimes \hdots \otimes b_k$ in  $\mathcal{B}^{\mgl_m}_{\varpi_{i_1}} \otimes \hdots \otimes \mathcal{B}^{\mgl_m}_{\varpi_{i_k}}$:
	\[
	\sum^k_{j=1}{w^{\mgl_m}_0 \cdot \varpi_{i_j}}\leq\wt(b)=\sum^k_{j=1}{\wt(b_j)} \leq \sum^k_{j=1}{\varpi_{i_j}}
	\]
	We get equality on the left precisely when $b_j=b^{\text{low}}_j \; \forall \; j=1,\hdots,k$ and equality on the right precisely when $b_j=b^{\text{high}}_j \; \forall \; j=1,\hdots,k$. Namely, $b^{\text{low}}_1 \otimes b^{\text{low}}_2 \otimes \hdots \otimes b^{\text{low}}_k$ and $b^{\text{high}}_1 \otimes b^{\text{high}}_2 \otimes \hdots \otimes b^{\text{high}}_k$ are the unique elements in $\mathcal{B}^{\mgl_m}_{\varpi_{i_1}} \otimes \hdots \otimes \mathcal{B}^{\mgl_m}_{\varpi_{i_k}}$ with weights $\sum^k_{j=1}{w^{\mgl_m}_0 \cdot \varpi_{i_j}}$ and $\sum^k_{j=1}{\varpi_{i_j}}$ respectively.
	
	Back to our case, since:
	\begin{align*}\wt(\xi(b^{\text{low}}_k \otimes \hdots \otimes b^{\text{low}}_2 \otimes b^{\text{low}}_1)) &=w^{\mgl_m}_0 \cdot\wt(b^{\text{low}}_k \otimes \hdots \otimes b^{\text{low}}_2 \otimes b^{\text{low}}_1) \\
		&=
	w^{\mgl_m}_0 \cdot \sum^k_{i=1}{w^{\mgl_m}_0 \cdot \varpi_{\la_i}} = \sum^k_{i=1}{\varpi_{\la_i}}
	\end{align*}
	we must have $\xi(b^{\text{low}}_k \otimes \hdots \otimes b^{\text{low}}_2 \otimes b^{\text{low}}_1)=b^{\text{high}}_k \otimes \hdots \otimes b^{\text{high}}_2 \otimes b^{\text{high}}_1$.
	
	So, we get that for a matrix $L$ with filling determined by the partition $\la=(\la_1,\la_2,\hdots,\la_k)$ as above:
	\begin{center}$\so(L)=$ the $k \times m$ matrix with entries equal to zero except in the lower left corner which has $1$'s filling the shape of the reflected partition  $(\la_k,\hdots,\la_2,\la_1)$ (see Figure \ref{fig:sll})\end{center}
	
	\begin{figure}[h]
		$$\begin{tikzpicture}
		\matrix[matrix of math nodes,left delimiter={[}, right delimiter={]}](m)
		{	
			\phantom{0} & \phantom{0} & \phantom{0} &  \la_1  & \phantom{0} & \phantom{0} & \phantom{0} & \phantom{0} \\
			\phantom{0} & \phantom{0} & \phantom{0} &  \la_2 & \phantom{0} & \phantom{0} & \phantom{0} & \phantom{0} \\
			\phantom{0} & \phantom{0} & \phantom{0} & \vdots & \phantom{0} & \phantom{0} & \phantom{0} & \phantom{0}  \\	
			\phantom{0} &  \phantom{0} & \phantom{0} & \phantom{0}  & \phantom{0} & \phantom{0} & \phantom{0} & \phantom{0} \\
			\phantom{0} & \phantom{0} & \phantom{0}  & \la_k & \phantom{0} & \phantom{0} & \phantom{0} & \phantom{0}  \\
		};
		\draw[color=red](m-1-1.north west) -- (m-5-1.north west);
		\draw[color=red](m-1-1.north west) -- (m-1-8.north west);
		\draw[color=red] (m-1-8.north west) -- (m-2-8.north west);
		\draw[color=red] (m-2-8.north west) -- (m-2-7.north west);
		\draw[color=red] (m-2-7.north west) -- (m-3-7.north west);
		\draw[color=red] (m-3-7.north west) -- (m-3-5.north west);
		\draw[color=red] (m-3-5.north west) -- (m-4-5.north west);
		\draw[color=red] (m-4-5.north west) -- (m-4-3.north west);
		\draw[color=red] (m-4-3.north west) -- (m-5-3.north west);
		\draw[color=red] (m-5-3.north west) -- (m-5-1.north west);
		\node[left] at ($(m.west)+(-0.5,0)$) {$L=$};
		\node[right] at ($(m.east)+(0.5,0)$) {$\stackrel{\so}{\xrightarrow{\hspace*{1cm}}}$};
		
		\begin{scope}[xshift=6.5cm]
		\matrix[matrix of math nodes,left delimiter={[}, right delimiter={]}](n)
		{
			\phantom{0} &  \phantom{0} & \phantom{0} & \la_k  & \phantom{0} & \phantom{0} & \phantom{0} & \phantom{0} \\
			\phantom{0} & \phantom{0} & \phantom{0}  & \phantom{0} & \phantom{0} & \phantom{0} & \phantom{0} & \phantom{0}  \\
			\phantom{0} & \phantom{0} &  \phantom{0} & \vdots  &\phantom{0} & \phantom{0} & \phantom{0} & \phantom{0}  \\
			\phantom{0} & \phantom{0} & \phantom{0} & \la_2 & \phantom{0} & \phantom{0} & \phantom{0} & \phantom{0} \\
			\phantom{0} & \phantom{0} &  \phantom{0} &\la_1 & \phantom{0} & \phantom{0} & \phantom{0} & \phantom{0} \\
		};
		\draw[color=red](n-5-1.south west) -- (n-1-1.south west);
		\draw[color=red](n-5-1.south west) -- (n-5-8.south west);
		\draw[color=red] (n-5-8.south west) -- (n-4-8.south west);
		\draw[color=red] (n-4-8.south west) -- (n-4-7.south west);
		\draw[color=red] (n-4-7.south west) -- (n-3-7.south west);
		\draw[color=red] (n-3-7.south west) -- (n-3-5.south west);
		\draw[color=red] (n-3-5.south west) -- (n-2-5.south west);
		\draw[color=red] (n-2-5.south west) -- (n-2-3.south west);
		\draw[color=red] (n-2-3.south west) -- (n-1-3.south west);
		\draw[color=red] (n-1-3.south west) -- (n-1-1.south west);
		\end{scope}
		\end{tikzpicture}$$
		\caption{The result of applying $\so$ to a $\mgl_k$-- and $\mgl_m$--highest weight matrix.}\label{fig:sll}
	\end{figure}

	For the action of $\sii$, we consider $L \in \Lambda^{\ast} {B_{k,m}} \cong \bigsqcup_{\underline{j}}\mathcal{B}^{\mgl_k}_{\varpi_{j_1}} \otimes \hdots \otimes  \mathcal{B}^{\mgl_k}_{\varpi_{j_m}}$ as a tensor product of $\mgl_k$--crystals, using the column decomposition. Let $\la^{\text{tr}}=:\tau =(\tau_1,\tau_2, \hdots,\tau_m)$ be the conjugate partition of the partition $\la$ described above. Then, an analogous argument about the filling of $L$ as before tells us that $L=c^{\text{high}}_m \otimes c^{\text{high}}_{m-1} \otimes \hdots \otimes c^{\text{high}}_1 \in \mathcal{B}^{\mgl_k}_{\varpi_{\tau_m}} \otimes \hdots \otimes  \mathcal{B}^{\mgl_k}_{\varpi_{\tau_1}}$, with $\wt(c^{\text{high}}_i)=\varpi_{\tau_i}$. By weight considerations similar to the earlier ones: 
	\begin{align*}
		\wt(\sii(L)) &= \wt(\sii(c^{\text{high}}_m \otimes c^{\text{high}}_{m-1} \otimes \hdots \otimes c^{\text{high}}_1)) \\
		&=w^{\mgl_k}_0 \cdot \wt(c^{\text{high}}_m \otimes c^{\text{high}}_{m-1} \otimes \hdots \otimes c^{\text{high}}_1) \\
		&= w^{\mgl_k}_0 \cdot \sum^m_{i=1}\tau_i = \sum^m_{i=1}w^{\mgl_k}_0 \cdot \tau_i = \wt(c^{\text{low}}_m \otimes c^{\text{low}}_{m-1} \otimes \hdots \otimes c^{\text{low}}_1)
	\end{align*}
	And as before, since $c^{\text{low}}_m \otimes c^{\text{low}}_{m-1} \otimes \hdots \otimes c^{\text{low}}_1$ is the unique element in the crystal $\mathcal{B}^{\mgl_k}_{\varpi_{\tau_m}} \otimes \hdots \otimes  \mathcal{B}^{\mgl_k}_{\varpi_{\tau_1}}$ with weight $\sum_i\tau_i$, $\sii(L)=c^{\text{low}}_m \otimes c^{\text{low}}_{m-1} \otimes \hdots \otimes c^{\text{low}}_1$.
	Since $c^{\text{low}}=(c^{\text{high}} \text{ reflected through the middle})$, i.e. $(c^{\text{low}})_i= (c^{\text{high}})_{k+1-i}$ for all $i=1,\hdots,k$, we get:
	\begin{center}$\sii(L)=$ the matrix $L$ reflected through the middle row, i.e. the $k \times m$ matrix with entries equal to zero except in the lower left corner which has $1$'s filling the shape of the reflected partition  $(\la_k,\hdots,\la_2,\la_1)$\end{center}
	This operation gives the same result on $L$ as applying $\so$ (shown in Figure \ref{fig:sll}), thus $\so(L)=\sii(L)$ and so $\so(P)=\sii(P)$.
	
	Next we show (2) $Cf^{\max}(\so(P))=Cf^{\max}(P)$. we know that $\so$ is a $\mgl_m$--crystal isomorphism (as shown in \cite{HK}), so takes $\mgl_m$--highest weight elements to $\mgl_m$--highest weight elements, namely $\so(P)$ is also a $\mgl_m$--highest weight element. So, $Cf^{\max}(P)$ and $Cf^{\max}(\so(P))$ are both obtained by moving all the way to the bottom all the $1$'s in $\so(P)$, respectively $P$, and obtaining matrices with $0$'s everywhere except in the lower left corner where the $1$'s fill an upside-down partition, i.e. aligned Southwest. All we have to confirm then, is that the action of $\so$ preserves the number of $1$'s in each column of $P$, which then determine the partition. But since $\so$ is a $\mgl_m$--crystal isomorphism, it preserves the $\mgl_m$--crystal weights $(\sum^n_{i=1}P_{i,1}, \hdots, \sum^n_{i=1}P_{i,m})$ and $\sum^n_{i=1}P_{i,j}$ is the number of $1$'s in the $j$--th column of $P$, which is therefore preserved by $\so$.
\end{proof}

\begin{example} We show that the inner and outer actions of $s_{1,2}$ agree on the matrix from Example \ref{ex:tabl}.
	\begin{align*}s^{\text{o}}_{1,2}&:M=\left[ \begin{array}{ccccc}
			1 & 1 & 1 & 0 & 0  \\
			0 & 0 & 1 & 1 & 0  \\
			1 & 1 & 1 & 0 & 1  \\
		\end{array}\right] \xrightarrow{\text{flip}^{1,2}} \left[ \begin{array}{ccccc}
		0 & 0 & 1 & 1 & 0  \\
		1 & 1 & 1 & 0 & 0  \\
		1 & 1 & 1 & 0 & 1  \\
	\end{array}\right] \\
	&\xrightarrow{\xi \otimes \xi \otimes \text{Id}} \left[ \begin{array}{ccccc}
	0 & 1 & 1 & 0 & 0  \\
	0 & 0 & 1 & 1 & 1  \\
	1 & 1 & 1 & 0 & 1  \\
\end{array}\right] 
\xrightarrow{\xi\otimes\text{Id}} \left[\begin{array}{ccccc}
	1 & 0 & 1 & 0 & 0  \\
	0 & 1 & 1 & 1 & 0  \\
	1 & 1 & 1 & 0 & 1  \\
\end{array}\right]=s^{\text{i}}_{1,2}(M)\end{align*}
\end{example}

\begin{corollary}
	The outer action of $C_m$ on the $m$--tensor product of $\mgl_n$--crystals agrees with the inner action on the $\mgl_m$--crystal $\Lambda^KB_{n,m}$ under the crystal isomorphism: $\Lambda^KB_{n,m} \cong \bigsqcup_{\stackrel{\la \subset n \times m}{|\la|=K}}\mathcal{B}^{\mgl_n}_{\la} \otimes \mathcal{B}^{\mgl_m}_{\la^{\text{tr}}}$.  More precisely, for any $s_{p,q} \in C_m$ and any $N   \in \Lambda^K{B_{n,m}}$:
	\[s^{\text{o}}_{m+1-q,m+1-p}(N)=s^{\text{i}}_{p,q}(N)\]
\end{corollary}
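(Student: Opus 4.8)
The plan is to obtain this as the row--column mirror of Theorem \ref{thm:agree}, the only genuine work being to reconcile the opposite ordering conventions built into the two decompositions (\ref{eqn:col}) and (\ref{eqn:row}). First I would record that both sides of the claimed identity are honest $C_m$--actions: the outer action is a homomorphism by \cite{HK}, and the inner action is a homomorphism by Theorem \ref{prop:action}. Moreover $s_{p,q}\mapsto s_{m+1-q,\,m+1-p}$ is precisely the automorphism $\Theta$ of $C_m$ induced by the Dynkin diagram automorphism $\theta$ of $\mgl_m$, since $\theta$ sends the connected subdiagram $\{p,\dots,q-1\}$ to $\{m-q+1,\dots,m-p\}$. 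As $\Theta$ is a group automorphism and both actions are homomorphisms, it suffices to verify the statement on the generators $s_{1,k}$, $k=2,\dots,m$; that is, to show $s^{\text{o}}_{m+1-k,\,m}(N)=s^{\text{i}}_{1,k}(N)$ for every $N\in\Lambda^K B_{n,m}$.

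For this I would run the argument of Theorem \ref{thm:agree} with the roles of rows and columns, and of $\mgl_n$ and $\mgl_m$, interchanged. Lemma \ref{lem:xinslm} already supplies exactly the ingredient needed on the inner side, namely that the $\mgl_m$--Schützenberger involution $\xi^{\mgl_m}_{1,k}=s^{\text{i}}_{1,k}$ is a $\mgl_n$--crystal morphism; its stated analogue, that $\xi^{\mgl_n}_{1,l}$ is a $\mgl_m$--crystal morphism, together with the analogue of the intertwining relation of Lemma \ref{lem:parcom}, handles the outer side. Combined with the commuting structures of Proposition \ref{prop:mor} and the skew Howe isomorphism of Section \ref{sub:shc}, the verification reduces, exactly as in Section \ref{sec:skew}, to a single matrix $L$ that is simultaneously $\mgl_n$-- and $\mgl_k$--highest weight for $\mgl_k\subset\mgl_m$. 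By Lemma \ref{lem:part} such an $L$ has its $1$'s filling a partition shape in the top--left corner, and the weight computation of Theorem \ref{thm:agree} shows that both the inner $\mgl_m$--Schützenberger involution and the outer $C_m$--action send $L$ to the matrix obtained by reflecting that partition shape; hence the two agree on $L$, and therefore everywhere.

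The single place where the transposed argument departs from a verbatim copy, and the step I expect to be the main obstacle, is the index bookkeeping. In the row decomposition (\ref{eqn:row}) the tensor factors are read in the forward order $r_1\otimes\cdots\otimes r_n$, whereas in the column decomposition (\ref{eqn:col}) they are read in the reversed order $c_m\otimes\cdots\otimes c_1$, so the $j$--th outer tensor factor of $\Lambda^K B_{n,m}$ is the column $c_{m+1-j}$. Consequently the outer generator acting on the columns $c_1,\dots,c_k$ --- the columns that the intrinsic inner involution $s^{\text{i}}_{1,k}$ affects --- is the one occupying tensor positions $m-k+1,\dots,m$, namely $s^{\text{o}}_{m+1-k,\,m}$. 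Relabeling positions by $j\mapsto m+1-j$ is exactly the passage $s_{p,q}\mapsto s_{m+1-q,\,m+1-p}$ furnished by $\theta$, so the task is to confirm that this one order reversal is the sole discrepancy introduced by transposing the proof, and that it enters the outer action and the intrinsic inner Schützenberger action compatibly, yielding precisely the shift $p\mapsto m+1-q$, $q\mapsto m+1-p$ rather than any other relabeling. Once this is pinned down, every remaining step of Section \ref{sec:skew} transposes without change.
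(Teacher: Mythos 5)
Your outline is workable, but it takes a genuinely different route from the paper. You propose to reduce to the generators $s_{1,k}$ via the Dynkin diagram automorphism of $C_m$ and then re-run the whole proof of Theorem \ref{thm:agree} with rows and columns interchanged, transposing Lemmas \ref{lem:parcom} and \ref{lem:xinslm} and the reduction to a doubly extremal matrix $L$. The paper instead keeps Theorem \ref{thm:agree} as a black box: it introduces the rotation $F:\Lambda^K B_{m,n}\to\Lambda^K B_{n,m}$ (counterclockwise by $90^{\circ}$), checks that $F$ intertwines column operators on the source with row operators on the target ($Re_i\circ F=F\circ Ce_i$, etc.), and deduces the two conjugation identities $F\circ s^{\text{i}}_{p,q}=s^{\text{i}}_{p,q}\circ F$ and $F\circ s^{\text{o}}_{p,q}=s^{\text{o}}_{m+1-q,m+1-p}\circ F$; applying $F$ to the identity $s^{\text{o}}_{p,q}(M)=s^{\text{i}}_{p,q}(M)$ on $\Lambda^K B_{m,n}$ then gives the corollary for all $s_{p,q}$ at once, with no reduction to generators. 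The index shift $(p,q)\mapsto(m+1-q,m+1-p)$, which you correctly trace to the reversed ordering of factors in the column decomposition (\ref{eqn:col}), is in the paper produced by computing $F$ on tensor factors as $\xi^{\otimes m}\circ\text{flip}^{1,m}$ and invoking Proposition 2 of \cite{HK}. Your approach buys self-containedness at the cost of redoing each lemma in transposed form and threading the order reversal through all of them; the paper's buys brevity by concentrating exactly the bookkeeping you flag as the main obstacle into the single identity $F\circ s^{\text{o}}_{p,q}=s^{\text{o}}_{m+1-q,m+1-p}\circ F$. One small imprecision to watch if you carry out your version: since the component of the skew Howe map on which the inner $C_m$--action lives is $Cf^{\text{max}}$, the most direct transposition reduces to $\mgl_n$--\emph{lowest} weight elements, and the resulting doubly extremal matrix is $\mgl_n$--lowest and $\mgl_k$--highest weight, hence by Lemma \ref{lem:part} has its $1$'s filling a Southwest-aligned partition rather than a top-left one; this changes the picture but not the viability of the argument.
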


\begin{proof}
	Consider the map $F: \Lambda^K{B_{m,n}} \longrightarrow \Lambda^K{B_{n,m}}$ which rotates a given matrix counterclockwise by $90$ degrees. It is a bijection, and we will consider its effect on the ``outer'' and ``inner'' cactus group actions. We will first show that for $M \in \Lambda^K{B_{m,n}}$, $Re_i(F(M))=F(Ce_i(M)) \; \forall \; i=1,\hdots, m-1$. Let $N=F(M)$ and first consider:
	\begin{align*}
		R\ep^k_i(N)&=\sum^{k-1}_{s=1}{(N_{s,i+1}-N_{s,i})}+\delta_{N_{k,i},0}\delta_{N_{k,i+1},1} \\
		&=\sum^{k-1}_{s=1}{(M_{i+1,n+1-s}-M_{i,n+1-s})}+\delta_{M_{i,n+1-k},0}\delta_{M_{i+1,n+1-k},1} \\
		&=\sum^{n}_{t=n+2-k}{(M_{i+1,t}-M_{i,t})}+\delta_{M_{i,n+1-k},0}\delta_{M_{i+1,n+1-k},1}=C\ep^k_i(M)
	\end{align*}
	So, if $Ce_i$ acts in the $(n+1-k)$--th column in $M$, changing the pair of elements $(M_{i,n+1-k},M_{i+1,n+1-k})=(N_{k,i},N_{k,i+1})$ from $(0,1)$ to $(1,0)$, then $Re_i$ acts in the $k$--th row of $N$, changing $(N_{k,i},N_{k,i+1})$ from $(0,1)$ to $(1,0)$. Therefore, $Re_i(F(M))=F(Ce_i(M))$ and analogously, we can show that $Rf_i(F(M))=F(Cf_i(M))$.
	We will study the inner cactus group action first. Given $M \in \Lambda^K{B_{m,n}}$ and $s_{p,q} \in C_m$, there exist two decompositions:
	\[Cf_{i_s}\hdots Cf_{i_1}(H) = M = Ce_{j_t} \hdots Ce_{j_1}(L)\]
	for some $i_1,\hdots, i_s,j_1,\hdots, j_t \in \{p,\hdots, q-1\}$ and $H \in \Lambda^K{B_{m,n}}$ such that $Ce_i(H)=0 \; \forall \; i \in \{p,\hdots,q-1\}$, $L \in \Lambda^K{B_{m,n}}$ such that $Cf_i(L)=0 \; \forall \; i \in \{p,\hdots,q-1\}$. Then let $d=q+1-p$, and using that $s^{\text{i}}_{p,q}(Cf_i(M))=Ce_{d-i}(s^{\text{i}}_{p,q}(M)), \; s^{\text{i}}_{p,q}(Ce_i(M))=Cf_{d-i}(s^{\text{i}}_{p,q}(M)), \; s^{\text{i}}_{p,q}(H)=L$ from Definition \ref{def:schu} of the Sch\"{u}tzenberger involution, we get:
	$$\begin{array}{ccc}
	M &\xrightarrow{s^{\text{i}}_{p,q}} Ce_{d-i_s} \hdots Ce_{d-i_1} L &\xrightarrow{F} Re_{d-i_s} \hdots Re_{d-i_1} F(L) \\
	M &\xrightarrow{F} Rf_{i_s} \hdots Rf_{i_1} F(H) &\xrightarrow{s^{\text{i}}_{p,q}} Re_{d-i_s} \hdots Re_{d-i_1} F(L)
	\end{array}$$
	The last step above follows since: $$Re_i(F(H))=F(Ce_i(H))=0=F(Cf_i(L))=Rf_i(F(L)) \; \forall \; i \in \{p,\hdots,q-1\}$$ so $F(H)$ and $F(L)$ are the highest  and lowest weight elements of an irreducible component of the crystal restricted to the interval $\{p,q\}$, and so they are switched by $s^{\text{i}}_{p,q}$. Overall:
	\begin{equation}
	\label{eqn:si}
	F(s^{\text{i}}_{p,q}(M))=s^{\text{i}}_{p,q}(F(M))
	\end{equation}
	Next, we consider the outer cactus group action. Since the ``outer'' action of an element $s_{p,q} \in C_m$ on $M \in \Lambda^K{B_{m,n}}$ is determined by the $\mgl_n$--crystal structure, we will study that. It in turn is determined by the rows of $M$, denoted $r_1,\hdots, r_m$. Under the decomposition in Equation (\ref{eqn:row}) on page 9, $M$ corresponds to $r_1 \otimes \hdots \otimes r_m$ as a $\mgl_n$--crystal element. For $N=F(M)$, we have $N_{k,j}=M_{j,n+1-k}$. Hence, the $j$--th column of $N$ corresponds to the reflected $j$--th row of $M$, which by Lemma \ref{lem:flip} is $\xi(r_j)$. So, the columns of $N$ are $\xi(r_1), \hdots,\xi(r_m)$. Under the decomposition in Equation (\ref{eqn:col}), $N$ then corresponds to $\xi(r_m) \otimes \hdots \otimes \xi(r_1)$ as a $\mgl_n$--crystal element. Diagrammatically,
	\[\left[\begin{array}{ccc}
	\textrm{\bf{---}} & r_1 & \longrightarrow \\
	& \vdots & \\
	\textrm{\bf{---}} & r_m & \longrightarrow
	\end{array}\right]
	\xrightarrow{\phantom{0} F \phantom{0}} \left[\begin{array}{ccc}
	\uparrow & & \uparrow \\
	r_1 & \hdots & r_m \\
	| & & | 
	\end{array}\right]=
	\left[\begin{array}{ccc}
	| & & | \\
	\xi(r_1) & \hdots & \xi(r_m) \\
	\downarrow & & \downarrow 
	\end{array}\right]\]
	So the map $F$ on the level of $\mgl_n$--crystals can be described as $\xi^{\otimes m} \circ \text{flip}^{1,m}$: \[F(r_1\otimes \hdots \otimes r_m)=\xi(r_m) \otimes \hdots \otimes \xi(r_1)\]
	Composing $F$ with the outer action of the cactus group, we have:

	\begin{align*}
	& r_1\otimes \hdots \otimes r_m \xrightarrow{s^{\text{o}}_{p,q}} 
	r_1 \otimes \hdots \otimes r_{p-1} \otimes \xi(\xi(r_q) \otimes \hdots \otimes \xi(r_p)) \otimes r_{q+1} \otimes \hdots \otimes r_m \\
	&(\widetilde{r_q} \otimes \hdots \otimes \widetilde{r_p}:= \xi(\xi(r_q) \otimes \hdots \otimes \xi(r_p))) \\
	&\xrightarrow{F} 
	\xi(r_m) \otimes \hdots \otimes \xi(r_{q+1}) \otimes \xi(\widetilde{r_p}) \otimes \hdots \otimes \xi(\widetilde{r_q}) \otimes\xi(r_{p-1}) \otimes \hdots \otimes \xi(r_1)
	\end{align*}

	\begin{align*}
	&r_1\otimes \hdots \otimes r_m \xrightarrow{F} \xi(r_m) \otimes \hdots \otimes \xi(r_q) \otimes \hdots \otimes \xi(r_p) \otimes \hdots \otimes \xi(r_1) \\
	& (\widetilde{q}=m+1-q, \; \widetilde{p}=m+1-p) \\
	&\xrightarrow{s^{\text{o}}_{\widetilde{q},\widetilde{p}}} \xi(r_m) \otimes \hdots \otimes \xi(r_{q+1}) \otimes \xi(r_p \otimes \hdots \otimes r_q) \otimes \xi(r_{p-1}) \otimes \hdots \otimes \xi(r_1)
	\end{align*}
	
	\vspace{0.3cm}	
	The first $(m-q)$ elements (i.e. $\xi(r_m) \otimes \hdots \otimes \xi(r_{q+1})$) and the last $(p-1)$ elements (i.e. $\xi(r_{p-1}) \otimes \hdots \otimes \xi(r_1)$) of the resulting tensor product agree under both compositions, so we look at what happens to $r_p \otimes \hdots \otimes r_q \in B_p \otimes \hdots \otimes B_q$ (denoting the $\mgl_n$--crystals to which these elements belong). They end up in positions $m+1-q$ to $m+1-p$ in the overall tensor product in both compositions, and there the action of the first composition $F \circ s^{\text{o}}_{p,q}$ is:
	\[(\xi_{B_p} \otimes \hdots \otimes \xi_{B_q})\circ\text{flip}^{p,q}\circ\xi_{B_q \otimes \hdots\otimes B_p}\circ(\xi_{B_q} \otimes \hdots \otimes \xi_{B_p})\circ\text{flip}^{p,q}\]
	Similarly, we can describe the second composition $s^{\text{o}}_{m+1-q,m+1-p} \circ F$ acting on $r_p \otimes \hdots \otimes r_q$ as:
	\begin{align*}\xi_{B_p \otimes \hdots \otimes B_q}\circ(\xi_{B_p} \otimes \hdots \otimes \xi_{B_q})\circ\text{flip}^{p,q}\circ (\xi_{B_q} \otimes \hdots \otimes \xi_{B_p}) \circ \text{flip}^{p,q} \\
		=\xi_{B_p \otimes \hdots \otimes B_q}\circ\text{flip}^{p,q}\circ(\xi_{B_q} \otimes \hdots \otimes \xi_{B_p})\circ (\xi_{B_q} \otimes \hdots \otimes \xi_{B_p}) \circ \text{flip}^{p,q} \\
		=(\xi_{B_p} \otimes \hdots \otimes \xi_{B_q})\circ\text{flip}^{p,q}\circ\xi_{B_q \otimes \hdots\otimes B_p}\circ (\xi_{B_q} \otimes \hdots \otimes \xi_{B_p}) \circ \text{flip}^{p,q}
	\end{align*}
	The last equality follows from Proposition 2 in \cite{HK}, where it is shown that $\xi_{B_p \otimes \hdots \otimes B_q}\circ\text{flip}^{p,q}\circ(\xi_{B_q} \otimes \hdots \otimes \xi_{B_p})=(\xi_{B_p} \otimes \hdots \otimes \xi_{B_q})\circ\text{flip}^{p,q}\circ\xi_{B_q \otimes \hdots\otimes B_p}$ as part of the result that the outer cactus group action is an involution. This tells us:
	\begin{equation}
	\label{eqn:so}
	F(s^{\text{o}}_{p,q}(M))=s^{\text{o}}_{m+1-q,m+1-p}(F(M))
	\end{equation}
	Finally, given any matrix $N \in \Lambda^K{B_{n,m}}$, there is a matrix $M \in \Lambda^K{B_{m,n}}$ such that $F(M)=N$. From Theorem \ref{thm:agree}, for an element $s_{p,q} \in C_m$ we know that $s^{\text{o}}_{p,q}(M)=s^{\text{i}}_{p,q}(M)$. Applying $F$ to both sides and using Equations (\ref{eqn:si}) and (\ref{eqn:so}), we get:
	\[s^{\text{o}}_{m+1-q,m+1-p}(N)=s^{\text{i}}_{p,q}(N).\]
\end{proof}
\begin{example} Using the $3 \times 5$ matrix $M \in \Lambda^9B_{3,5}$ from Example \ref{ex:tabl} and $s_{4,5} \in C_5$, we show that the actions of $s^{\text{o}}_{1,2}$ and $s^{\text{in}}_{4,5}$ agree:
	
	\begin{align*}s^{\text{o}}_{1,2}&:M=\left[ \begin{array}{ccccc}
			1 & 1 & 1 & 0 & 0  \\
			0 & 0 & 1 & 1 & 0  \\
			1 & 1 & 1 & 0 & 1  \\
		\end{array}\right] \xrightarrow{\text{flip}^{1,2}} \left[ \begin{array}{ccccc}
		1 & 1 & 1 & 0 & 0  \\
		0 & 0 & 1 & 0 & 1  \\
		1 & 1 & 1 & 1 & 0  \\
	\end{array}\right] \\
	& \xrightarrow{\xi \otimes \xi \otimes \text{Id}^{\otimes 3}} \left[ \begin{array}{ccccc}
	1 & 1 & 1 & 1 & 0  \\
	0 & 0 & 1 & 0 & 1  \\
	1 & 1 & 1 & 0 & 0  \\
\end{array}\right]	\xrightarrow{\xi\otimes\text{Id}^{\otimes 3}} \left[ \begin{array}{ccccc}
		1 & 1 & 1 & 0 & 0  \\
		0 & 0 & 1 & 1 & 0  \\
		1 & 1 & 1 & 0 & 1  \\
	\end{array}\right]=s^{\text{i}}_{4,5}(M)\end{align*}	

\end{example}

In the next section we discuss one setting in which crystal structures and the action of the cactus group can be applied.

\section{Shift of Argument and Gaudin Algebras}
\label{sec:shiftgaudin}
A further application of the two cactus group actions is relating two families of commutative subalgebras of $U(\fg)$ and $U(\fg)^{\otimes n}$. We describe this below using skew Howe duality for $\mgl_n$ and $\mgl_m$.

\subsection{Shift of argument algebras}
The shift of argument algebras are a family $\{\mathcal{A}_{\mu}\}$ of maximal commutative subalgebras of $U(\fg)$, parametrized by elements $\mu$ in the de Concini--Procesi wonderful compactification $\overline{\mathbb{P}\left(\fh^{\text{reg}}_{\R}\right)}$ where $\fh^{\text{reg}}_{\R}$ are the regular, real points of $\fh$ (\cite{S}, \cite{AFV}, \cite{HKRW}). A result in \cite{DJS} relates this space to the pure cactus group, for any Lie algebra $\fg$:
\[PC_{\fg} \cong \pi_1\left(\overline{\mathbb{P}\left(\fh^{\text{reg}}_{\R}\right)}\right)\]

For any irreducible highest weight $\fg$-representation $V_{\la}$ with corresponding crystal $\mathcal{B}_{\la}$, work by Rybnikov leads to the result that $\mathcal{A}_{\mu} \subset U(\fg)$ acts on $V_{\la}$ with simple spectrum (\cite{FFR10}, \cite{HKRW}). This leads to a covering $E(\la) \xrightarrow{\phi_{\la}} \overline{\mathbb{P}\left(\fh^{\text{reg}}_{\R}\right)}$ and a monodromy action of the fundamental group $\pi_1(\overline{\mathbb{P}\left(\fh^{\text{reg}}_{\R}\right)})=PC_{\fg}$ on a fiber $\phi^{-1}_{\la}(\mu)$ of $\mathcal{A}_{\mu}$-eigenlines. Joint work with J. Kamnitzer, L. Rybnikov and A. Weekes relates this geometric monodromy action of the pure cactus group with the combinatorial cactus group action on $\mathcal{B}_{\la}$ defined in Section \ref{sec:cactus} :

\begin{theorem}[\cite{HKRW}]
	\label{conj:shift}
	For $\fg$ a finite-dimensional complex reductive Lie algebra, $\la \in \Lambda_+$, and $\mu \in \overline{\mathbb{P}\left(\fh^{\text{reg}}_{\R}\right)}$: \begin{center}$\mathcal{B}_{\la}\cong \phi^{-1}_{\la}(\mu)$ as $PC_{\fg}$--sets\end{center}
	\label{thm:shift}
\end{theorem}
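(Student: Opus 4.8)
The plan is to exhibit $\phi^{-1}_{\la}(\mu)$ as a fiber of a topological covering of $X := \overline{\mathbb{P}(\fh^{\text{reg}}_{\R})}$, to identify one distinguished fiber with the underlying set of $\mathcal{B}_{\la}$, and then to show that the monodromy representation of $\pi_1(X) \cong PC_{\fg}$ coincides with the restriction of the combinatorial cactus action of Theorem \ref{prop:action}. The starting point is the simple-spectrum result of Rybnikov and Feigin--Frenkel--Rybnikov (\cite{FFR10}, \cite{HKRW}): for $\mu \in \mathbb{P}(\fh^{\text{reg}}_{\R})$ the shift of argument algebra $\mathcal{A}_{\mu}$ acts on $V_{\la}$ with simple, real spectrum. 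Since the joint eigenvalues are distinct and depend real-analytically on $\mu$, the $\mathcal{A}_{\mu}$-eigenlines assemble into a genuine finite covering of degree $\dim V_{\la} = |\mathcal{B}_{\la}|$ over the open stratum $\mathbb{P}(\fh^{\text{reg}}_{\R}) \subset X$.

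First I would extend this covering across the boundary of $X$. The codimension-one boundary faces of the wonderful compactification are indexed by the proper connected subdiagrams $J \subset I$, equivalently by the Levi subalgebras $\fg_J$, and approaching the $J$-face corresponds to a controlled degeneration of $\mathcal{A}_{\mu}$ in which the limiting algebra factorizes into a part governing the $\fg_J$-directions and a part governing the transverse directions. The essential analytic input at this stage is that these limit algebras still act with simple spectrum on $V_{\la}$; this non-degeneracy is exactly what \cite{FFR10} and \cite{HKRW} provide. Granting it, the covering extends to an unramified cover $\phi_{\la} \colon E(\la) \to X$, so that $\phi^{-1}_{\la}(\mu)$ carries a well-defined monodromy action of $\pi_1(X) = PC_{\fg}$.

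Next I would pin down the combinatorics. Choosing a base point deep inside a maximal corner of $X$ --- corresponding to a complete chain of Levi subalgebras --- the limiting algebra is a conjugate of the Gelfand--Tsetlin subalgebra of $U(\fg)$, whose joint spectrum on $V_{\la}$ is classically indexed by the Gelfand--Tsetlin patterns $K_{\la} \cong \mathcal{B}_{\la}$ recalled in Section \ref{sec:cactus}. This gives a canonical identification of the distinguished fiber with the set $\mathcal{B}_{\la}$. It then remains to compute the gluing data of the covering across the boundary faces: by the description of $X$ as a manifold with corners whose face-combinatorics recovers $\pi_1(X) = PC_{\fg}$ (\cite{DJS}), the monodromy is completely determined by the transition map across each $J$-face. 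I would show that this transition is precisely the Schützenberger involution $\xi_{B_J}$ --- transporting eigenlines across the $J$-wall, where the family degenerates to an algebra adapted to $\fg_J$, interchanges the highest- and lowest-weight eigenline of each irreducible $\fg_J$-constituent, which is the defining property of $\xi_{B_J} = s_J$. Since the combinatorial $PC_{\fg}$-action on $\mathcal{B}_{\la}$ is assembled from exactly these $\xi_{B_J}$, the two $PC_{\fg}$-set structures agree.

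The hard part is this last identification of the analytic wall-crossing monodromy with the combinatorial involution, together with checking that the local transitions assemble coherently so as to respect the relations of $C_{\fg}$. I would attack it by the Levi factorization of the preceding paragraph, reducing to the semisimple-rank-one situation: near a simple wall the family restricts to an $\msl_2$-type connection, whose monodromy can be computed explicitly as the (real/imaginary) monodromy of a Gauss--hypergeometric-type connection, and is seen to exchange the two ends of each $\msl_2$-string --- matching the rank-one Schützenberger involution $e_i \leftrightarrow f_i$. Compatibility with tensor products, via the identification of the transverse (Gaudin) degeneration with iterated coproducts, then propagates the rank-one computation to an arbitrary connected $J$ and simultaneously forces the cactus relations. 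Assembling these steps yields the desired isomorphism $\mathcal{B}_{\la} \cong \phi^{-1}_{\la}(\mu)$ of $PC_{\fg}$-sets.
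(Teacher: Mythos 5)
The paper does not actually prove this statement: it is quoted directly from \cite{HKRW}, where it is one of the main theorems, so there is no internal proof to compare yours against. Your sketch does reconstruct the broad architecture of the argument in \cite{HKRW} --- the simple spectrum of $\mathcal{A}_{\mu}$ on $V_{\la}$ assembles the eigenlines into a finite covering of $\overline{\mathbb{P}\left(\fh^{\text{reg}}_{\R}\right)}$, whose fundamental group is $PC_{\fg}$ by \cite{DJS}; a fiber over a maximally degenerate boundary point is a Gelfand--Tsetlin-type eigenbasis indexed by $\mathcal{B}_{\la}$; and the monodromy is assembled from wall-crossings identified with the partial Sch\"{u}tzenberger involutions $\xi_{B_J}$.

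As a proof, however, the proposal defers exactly the steps that constitute the content of \cite{HKRW}. First, the extension of the covering across the boundary --- that the limit algebras remain maximal commutative and still act with simple spectrum on $V_{\la}$ --- is not a routine analytic input one may ``grant''; it is a principal theorem of \cite{FFR10} and \cite{HKRW}, resting on the oper/Bethe-ansatz description of the spectrum. Second, your plan to compute the rank-one wall-crossing as ``the monodromy of a Gauss--hypergeometric-type connection'' does not make sense as stated: $\phi_{\la}$ is a finite covering of a manifold with corners, not a flat bundle, and the transition across an $\alpha_i$-wall is a set bijection obtained by following joint eigenlines through a degeneration, not the holonomy of an ODE. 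Identifying that bijection with the $\msl_2$ crystal reflection, and then with $\xi_{B_J}$ for general connected $J$, is the technical heart of \cite{HKRW} (and of \cite{R14} in the Gaudin case) and is carried out there by quite different means. Third, the claim that compatibility with tensor products ``forces the cactus relations'' has the logic reversed: the relations of $PC_{\fg}$ hold in $\pi_1\left(\overline{\mathbb{P}\left(\fh^{\text{reg}}_{\R}\right)}\right)$ automatically by \cite{DJS}; what needs verification is that the combinatorial Sch\"{u}tzenberger action satisfies them, which is the content of Theorem \ref{prop:action}. In short, your outline is a plausible roadmap of the cited reference, but every nontrivial step in it is asserted rather than proved.
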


Note that in the case of $\fg=\mgl(n,\C)$, $\overline{\mathbb{P}\left(\fh^{\text{reg}}_{\R}\right)}=\overline{M^{n+1}_0(\R)}$ where $\overline{M^{n+1}_0(\R)}$ is the Deligne--Knudson--Mumford moduli space of stable real curves of genus $0$ with $n+1$ marked points, also appearing in the work of Kapranov \cite{K} and Devadoss \cite{D}. So, we have $PC_n  \cong \pi_1\left(\overline{\mathbb{P}\left(\fh^{\text{reg}}_{\R}\right)} \right) \cong \pi_1\left(\overline{M^{n+1}_0(\R)}\right)$.

\subsection{Gaudin algebras} Given an $n$--tuple $\underline{z}=(z_1,z_2,\hdots,z_n) \in \C^n$ of pairwise distinct complex numbers, the Gaudin algebra $G_{\underline{z}}$ is generated by the Gaudin Hamiltonians (using the Casimir) and more complicated expressions (using the Feigin--Frenkel \cite{FF} center at the critical level) \cite{FFR94}. In \cite{R14}, it is shown that $G_{\underline{z}}$ acts with simple spectrum on $\text{Hom}(V_{\nu},V_{\la_1} \otimes \hdots \otimes V_{\la_n})$ for any $(n+1)$--tuple of highest weights $(\nu,\underline{\la})=(\nu,(\la_1,\hdots,\la_n))$. Let $B_{\nu}, B_{\la_1}, \hdots, B_{\la_n}$ denote the crystals for those representations and consider the image of $\underline{z}$ in $M^{n+1}_0(\mathbb{R})$. Analogously to the shift of argument setting, this leads to a covering $E(\nu,\underline{\la}) \xrightarrow{\psi_{\nu,\underline{\la}}} \overline{M^{n+1}_0(\mathbb{R})}$ and so a monodromy action $\pi_1(\overline{M^{n+1}_0(\mathbb{R})})=PC_n(=PC_{\mgl_n})$ on a fiber $\psi^{-1}_{\nu,\underline{\la}}(\underline{z})$ of $\mathcal{G}_{\underline{z}}$--eigenlines, which coincides with the outer cactus group action of $C_n$ on an $n$-tensor product of crystals defined in \cite{HK}:

\begin{theorem}(\cite{HKRW})
	\label{conj:gaudin}
	For $\fg$ a reductive Lie algebra, $\nu, \underline{\la} \subset \Lambda_+$, and $\underline{z} \in \overline{M^{n+1}_0(\mathbb{R})}$: \begin{center}$\text{Hom}_{\fg-\text{crystals}}(B_{\nu}, B_{\la_1} \otimes \hdots \otimes B_{\la_n})\setminus \{0\}\cong \psi^{-1}_{\nu,\underline{\la}}(\underline{z})$ as $PC_n$--sets\end{center}
\end{theorem}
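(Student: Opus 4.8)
The plan is to prove this in close parallel with the shift of argument statement, Theorem~\ref{thm:shift}, reducing the monodromy computation to the cactus generators via the degeneration of the Gaudin algebra at the boundary of $\overline{M^{n+1}_0(\R)}$. First I would establish the bijection on the level of sets: by Rybnikov's simple spectrum result~\cite{R14}, the fibre $\psi^{-1}_{\nu,\underline{\la}}(\underline z)$ has cardinality $\dim \mathrm{Hom}_{\fg}(V_\nu, V_{\la_1}\otimes\hdots\otimes V_{\la_n})$, and by the character property of crystals this equals the number of highest weight elements of weight $\nu$ in $B_{\la_1}\otimes\hdots\otimes B_{\la_n}$, i.e.\ $|\mathrm{Hom}_{\fg\text{-crystals}}(B_\nu, B_{\la_1}\otimes\hdots\otimes B_{\la_n})\setminus\{0\}|$. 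This yields a set bijection of the correct size, so the real content is that it intertwines the two $PC_n$--actions.

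The second and main step is to identify the monodromy. The fundamental group $PC_n=\pi_1(\overline{M^{n+1}_0(\R)})$ is generated by loops around the real boundary divisors, indexed by intervals $[p,q]\subset\{1,\hdots,n\}$ recording the collision of the marked points $z_p,\hdots,z_q$, with $s_{p,q}\in C_n$ the corresponding half-monodromy. I would analyze the family $G_{\underline z}$ as $z_p,\hdots,z_q$ collide: the Gaudin algebra factorizes and its eigenlines approach tensor products of eigenlines for a ``small'' system on $V_{\la_p}\otimes\hdots\otimes V_{\la_q}$ with those of a ``large'' system on the quotient. On the cluster the homogeneous (all-points-equal) limit of the Gaudin algebra is a shift of argument algebra $\mathcal{A}_\mu\subset U(\fg)$, so Theorem~\ref{thm:shift} identifies its half-monodromy with the Sch\"{u}tzenberger involution on the relevant crystal $\mathcal{B}_\mu$. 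Properness of the covering, controlled via the oper / Bethe-ansatz description of the spectrum as in~\cite{R14} and~\cite{FFR94}, guarantees the eigenlines extend continuously to the boundary, so this local model computes the monodromy.

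The third step is to match the resulting involution with the outer cactus generator $s^{\text{o}}_{p,q}$ of~\cite{HK}. The half-monodromy around the $[p,q]$ divisor combines the inner Sch\"{u}tzenberger involution on the colliding cluster (from Step~2) with the reversal of the cluster's position in the tensor product, which is exactly the recipe $s^{\text{o}}_{p,q}=\mathrm{Id}\otimes\hdots\otimes\xi_{p,q}\circ(\xi\otimes\hdots\otimes\xi)\circ\mathrm{flip}^{p,q}\otimes\hdots\otimes\mathrm{Id}$ from Section~\ref{sec:skew}. The coboundary (associativity) structure of the category of $\fg$--crystals from~\cite{HK} then shows these generators satisfy the cactus relations, so the assignment extends to a genuine $PC_n$--action agreeing with the combinatorial one; here one restricts to $PC_n\subset C_n$ since only pure elements preserve the ordering of the tensor factors and hence act on the fixed multiplicity space.

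The hard part will be Step~2: showing the eigenlines have well-defined limits at the boundary and that the half-monodromy is precisely the Sch\"{u}tzenberger involution rather than some other permutation of eigenlines. This rests on the real structure of the problem---the Gaudin Hamiltonians are self-adjoint with respect to the Shapovalov form, so a real simple spectrum forces the eigenlines to be real and the monodromy to act by involutions, consistent with the relation $s^2_J=1$---together with the precise spectral analysis near the collision, where one must check that the limiting eigenline data matches the branching $V_\mu\hookrightarrow V_{\la_p}\otimes\hdots\otimes V_{\la_q}$ encoded by the crystal. This is the technical heart carried out in~\cite{HKRW}.
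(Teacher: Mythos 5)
The first thing to note is that the paper does not prove this statement: Theorem \ref{conj:gaudin} is imported verbatim from \cite{HKRW} (as is Theorem \ref{conj:shift}), and Section \ref{sec:shiftgaudin} only uses it as background for the proposed application of Theorem \ref{thm:agree}. So there is no in-paper argument to compare yours against; what you have written is an outline of the external proof.

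Taken on its own terms, your Step 1 is correct and complete: simplicity of the $G_{\underline{z}}$--spectrum on $\mathrm{Hom}(V_\nu, V_{\la_1}\otimes\hdots\otimes V_{\la_n})$ gives $|\psi^{-1}_{\nu,\underline{\la}}(\underline{z})| = \dim\mathrm{Hom}_{\fg}(V_\nu,V_{\la_1}\otimes\hdots\otimes V_{\la_n})$, which by the character property of crystals equals the number of highest weight elements of weight $\nu$ in $B_{\la_1}\otimes\hdots\otimes B_{\la_n}$, i.e.\ the number of nonzero crystal morphisms $B_\nu\to B_{\la_1}\otimes\hdots\otimes B_{\la_n}$. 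Steps 2--3, however, are a plan rather than a proof: the extension of the eigenline covering over the boundary of $\overline{M^{n+1}_0(\R)}$, the factorization of the Bethe/Gaudin algebras at a boundary divisor, and the identification of the resulting half-monodromy with the crystal commutor are precisely the technical content of \cite{HKRW} and \cite{R14}, and you explicitly defer them. One specific imprecision worth flagging: the limit of $G_{\underline{z}}$ as $z_p,\hdots,z_q$ collide is not a shift of argument algebra $\mathcal{A}_\mu\subset U(\fg)$ acting on the cluster --- it is again a Gaudin-type algebra for the rescaled cluster tensored with one for the quotient configuration; the shift of argument algebras enter via a larger family of inhomogeneous Gaudin algebras interpolating between the two settings, which is how \cite{HKRW} transports the monodromy computation underlying Theorem \ref{conj:shift} to this one. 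If you intend this as a self-contained proof you would need to carry out that degeneration analysis; as a reading of where the theorem comes from, the outline is essentially accurate.
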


\subsection{An application of skew Howe duality} Classical skew Howe duality on $\Lambda^N (\C^n \otimes \C^m)$ allows us to consider this space as both a $\mgl_n$-- and $\mgl_m$--representation, and we have the decomposition:
\[\Lambda^N (\C^n \otimes \C^m) \cong \bigoplus_{\underline{k} \in \mathbb{N}^n, |\underline{k}|=N} \Lambda^{k_1} \mathbb{C}^m \otimes \hdots \otimes \Lambda^{k_n}\mathbb{C}^m\]
So, given $\underline{z} \in \overline{M^{n+1}_0(\mathbb{R})}$, we can consider the action of both $\mathcal{A}_{\underline{z}} \subset U(\mgl_n)$ and $G_{\underline{z}} \subset U(\mgl_m)^{\otimes n}$ on $W=\Lambda^{k_1} \mathbb{C}^m \otimes \hdots \otimes \Lambda^{k_n}\mathbb{C}^m$. Indeed, since each $\Lambda^k \C^m$ is a $\mgl_m$--representation, $G_{\underline{z}}$ acts on $W$, and $\mathcal{A}_{\underline{z}}$ commutes with the action of $U(\mathfrak{h})$ and so preserves the $\mgl_n$--weight space $W$ with weight $(k_1,\hdots,k_n)$. The images of the two actions $a: \mathcal{A}_{\underline{z}} \longrightarrow \text{End}(W)$ and $g: G_{\underline{z}} \longrightarrow \text{End}(W)$ are expected to agree  in $\text{End}(W)$. So, we expect to get the same fiber of eigenlines over any $\underline{z} \in \overline{M^{n+1}_0(\mathbb{R})}$ for $\mathcal{A}_{\underline{z}}$ and $G_{\underline{z}}$, and therefore the same cover. Considering the fibers as $PC_n$--sets, we use Theorem \ref{thm:agree} which tells us that the outer cactus action of $PC_n$ on tensor products of crystals agrees with the inner action (as $PC_n$ also preserves the weight spaces in a $\mgl_n$--crystal) under skew Howe duality for $W$, so the fibers are also the same as $PC_n$--sets. This relates Theorems \ref{conj:shift} and \ref{conj:gaudin} in this $(\mgl_n,\mgl_m)$ setting.

	\end{document}